\documentclass{ws-aa2}
\usepackage{comment,verbatim}
\usepackage[title,titletoc]{appendix}
\usepackage{tikz}

\DeclareMathOperator{\Bess}{Bess}

\DeclareMathOperator{\im}{Im}

\newtheorem{rhp}[theorem]{RH problem}


%
%



\begin{document}
\markboth{Dan Dai, Luming Yao \& Yu Zhai}{Asymptotics of the CHG process with a varying external potential}
\title{Asymptotics of the confluent hypergeometric process with a varying external potential in the super-exponential region}

\author{Dan Dai}

\address{Department of Mathematics, City University of
Hong Kong, Tat Chee Avenue, Kowloon, Hong Kong.\\
dandai@cityu.edu.hk}

\author{Luming Yao\footnote{Corresponding author.}\: $^{\dag \ddag}$}

\address{$^{\dag}$Institute for Advanced Study, Shenzhen University, Shenzhen 518060, China. 
\\$^{\ddag}$School of Mathematical Sciences, Fudan University, Shanghai 200433, China. 
\\ lumingyao@fudan.edu.cn}

\author{Yu Zhai}

\address{Department of Mathematics, City University of
Hong Kong, Tat Chee Avenue, Kowloon, Hong Kong.\\
yuzhai4-c@my.cityu.edu.hk}

\maketitle

\begin{history}
\end{history}

%
%
%

\begin{abstract}
In this paper, we investigate a determinantal point process on the interval $(-s,s)$, associated with the confluent hypergeometric kernel. Let $\mathcal{K}^{(\alpha,\beta)}_s$ denote the trace class integral operator acting on $L^2(-s, s)$ with the confluent hypergeometric kernel. Our focus is on deriving the asymptotics of the Fredholm determinant $\det(I-\gamma \mathcal{K}^{(\alpha,\beta)}_s)$ as $s \to +\infty$, while simultaneously $\gamma \to 1^-$ in a super-exponential region. In this regime of double scaling limit, our asymptotic result also gives us asymptotics of the eigenvalues $\lambda^{(\alpha, \beta)}_k(s)$ of the integral operator $\mathcal{K}^{(\alpha,\beta)}_s$ as $s \to +\infty$.  Based on the integrable structure of the confluent hypergeometric kernel, we derive our asymptotic results by applying the Deift-Zhou nonlinear steepest descent method to analyze the related Riemann-Hilbert problem.
\end{abstract}
\keywords{Transition asymptotics; confluent hypergeometric kernel; Riemann-Hilbert problem.}

\ccode{Mathematics Subject Classification 2020: 33C10, 34M50, 82B26, 45C05}


\section{Introduction}

Determinantal point processes have attracted significant research interest over the past few decades due to their connections with various topics in both mathematics and physics. These processes are associated with random unitary matrices \cite{And:Gui:Zei,Meh2004}, Dyson Brownian motion \cite{Dyson1962}, free fermionic theory \cite{Dou:Maj:Sch2018}, quantum gravity \cite{Stan:Witt2020}, and many other fields. For more properties and applications of determinantal point processes, one can refer to the comprehensive surveys by Soshnikov \cite{Sosh2000}, Johansson \cite{Johan2006}, Borodin \cite{Borodin-Survey2011}, and references therein.

Let $\mathcal{X}$ be a configuration such that $\#(\mathcal{X} \cap J)$ is finite for any bounded interval $J \subset \mathbb{R}$. A determinantal point process $\mathcal{P}$ is a probability measure on the space of all the configurations, where the $k$-point correlation function $\rho_k(x_1, \ldots, x_k) $ can be expressed in a determinantal form as follows:
\begin{equation} \label{corre-det}
\rho_k(x_1, \ldots, x_k) = \det[K(x_i,x_j)]_{i,j=1}^k.
\end{equation}
In the above formula, $K(\cdot,\cdot)$ is the so-called correlation kernel. One of the central problems in the study of determinantal point processes is about the spacing of the random particles within the process. One well-studied case is the sine point process, characterized by the correlation kernel:
\begin{equation} \label{eq: chg-sine}
K^{\sin} (x,y) = \frac{\sin (x-y)}{\pi (x-y)}.
\end{equation}
The gap probability, i.e. the probability that there is no particle in the interval $(-s,s)$, can be expressed in terms of the following Fredholm determinant:
\begin{equation} \label{eq: F-det-sine}
\det(I-  \mathcal{K}^{\sin}_s),  
\end{equation}
where $\mathcal{K}^{\sin}_s$ is the trace class integral operator acting on $L^2(-s, s)$ with the sine kernel in \eqref{eq: chg-sine}. The large gap asymptotics as $s \to +\infty$ has been extensively studied in the literature. In \cite{Bas:Wid1983,Bud:Bus1995,Dei:Its:Zhou2007,Ehr2006,Kra2004},  it has been shown that:
\begin{align}
\det(I-  \mathcal{K}^{\sin}_s) & =  e^{-\frac{s^2}{2}} s^{-\frac{1}{4}}
     e^{3\zeta'(-1)} 2^{\frac{1}{12}}     \left(1+O(s^{-1})\right), \label{sine-asy-super-exp} \\ 
\det(I- \gamma \mathcal{K}^{\sin}_s) & =  e^{-\frac{2\nu }{\pi}s} (4s)^{\frac{\nu^2}{2 \pi^2}} G^2(1+\frac{i\nu}{2\pi}) G^2(1-\frac{i\nu}{2\pi}) \left[1 + O\left(\frac{1}{s}\right)\right], \  0\leq \gamma < 1.  \label{sine-asy-exp} 
\end{align}
Here, $\nu = - \ln(1- \gamma)$, and $\zeta(\cdot)$ and $G(\cdot)$ represent the Riemann zeta-function and Barnes $G$-function, respectively. 
The above asymptotic results indicate a significant difference: the super-exponential rate $e^{-\frac{s^2}{2}}$ when $\gamma=1$ and the exponential rate $e^{-\frac{2\nu}{\pi}s}$ when $0 \le \gamma < 1$. As $\gamma \to 1^-$, there is a nontrivial transition from the super-exponential region to the exponential region. More precisely, when $\gamma$ belongs to the super-exponential region and approaches 1 rapidly enough, the large gap asymptotics is still dominated by the super-exponential factor $e^{-\frac{s^2}{2}}$. However, when $\gamma$ moves to the exponential region, approaching 1 but at a slower rate, the large gap asymptotics will be governed by the exponential factor $e^{-\frac{2\nu}{\pi}s}$. In the literature, the whole transition picture for the sine point process has been completed in \cite{Both:Dei:Kra2018} and references therein. For the Airy and Bessel point processes, some insights have been provided \cite{Both2016,Both:Buc2018}.

In this paper, we wish to consider a similar problem for the determinantal point process associated with another important kernel, namely the confluent hypergeometric kernel. It is defined as
\begin{equation}\label{chgkernel}
  K^{(\alpha,\beta)}(x,y)=\frac{1}{2\pi i}\frac{\Gamma(1+\alpha+\beta)\Gamma(1+\alpha-\beta)}{\Gamma(1+2\alpha)^2}\frac{\mathbb{A}(x)\mathbb{B}(y)-\mathbb{A}(y)\mathbb{B}(x)}{x-y},
\end{equation}
where $\alpha>-\frac 12$, $\beta \in i \mathbb{R}$ and
\begin{equation}
  \mathbb{A}(x)=\chi_\beta(x)^{\frac{1}{2}}|2x|^{\alpha}e^{-ix}\phi(1+\alpha+\beta,1+2\alpha,2ix), \quad \mathbb{B}(x)=\overline{\mathbb{A}(x)},
\end{equation}
with
\begin{equation}
\chi_\beta(x)=\begin{cases}
e^{\pi i \beta}, & \quad x<0\\
e^{-\pi i \beta}, & \quad x>0
\end{cases}
\end{equation}
and $\phi(a,b,z)$ being the confluent hypergeometric function (cf. \cite[Chap. 13]{NIST})
\begin{equation}
  \phi(a,b,z)=1+\sum_{k=1}^{\infty}\frac{a(a+1)\cdots(a+k-1)z^k}{b(b+1)\cdots(b+k-1)k!}.
\end{equation}
It is straightforward to check that, when $\alpha = \beta = 0$, the kernel $ K^{(0,0)}(x,y) $ reduces to the sine kernel in \eqref{eq: chg-sine}. While, when $\beta = 0$, the kernel \eqref{chgkernel} becomes the type-I Bessel kernel considered in \cite{Kui:Van2003}:
\begin{align}
K^{(\alpha,0)}(x,y) & = K^{\Bess,1} (x,y) = \frac{|x|^\alpha |y|^\alpha }{ x^\alpha y^\alpha}  \frac{\sqrt{xy}}{2}
\frac{J_{\alpha  + \frac{1}{2}}(x) J_{\alpha  - \frac{1}{2}}(y) - J_{\alpha  - \frac{1}{2}}(x) J_{\alpha  + \frac{1}{2}}(y) }{x-y} \label{eq: chg-bessel} 
\end{align}

Similar to \eqref{sine-asy-super-exp} and \eqref{sine-asy-exp}, we are also interested in large gap asymptotics of 
\begin{equation} \label{eq: F-det-deform}
\det(I-\gamma \mathcal{K}^{(\alpha,\beta)}_s), \qquad \gamma \in [0,1],
\end{equation}
where $\mathcal{K}^{(\alpha,\beta)}_s$ is the trace class integral operator acting on $L^2(-s,s)$ with the confluent hypergeometric kernel given in \eqref{chgkernel}. When $\gamma=1$, the large gap asymptotics has been obtained by Deift, Krasovsky and Vasilevska in \cite{Dei:Kra:Vas2011}, as $s \to +\infty$,
\begin{multline} \label{eq:chg-fred-asy}
\det(I- \mathcal{K}^{(\alpha,\beta)}_s) \\
= e^{-\frac{s^2}{2} + 2 \alpha s} s^{-\frac 14 -\alpha^2+\beta^2}  \frac{\sqrt{\pi} G^2(1/2) G(1+2\alpha)}{2^{2\alpha^2} G(1+\alpha+\beta)G(1+\alpha-\beta)} \left[1 + O\left(\frac{1}{s}\right)\right],
\end{multline}
where $G(x)$ is the Barnes $G$-function; see also \cite{Xu:Zhao2020}. When $\gamma$ is a fixed constant in $[0,1)$, \eqref{eq: F-det-deform} can be referred to as the deformed Fredholm determinant, which gives the gap probability that each eigenvalue is independently removed with probability $1-\gamma$. The large gap asymptotics for this deformed Fredholm determinant have recently been derived in \cite{Dai:Zhai2022}, as $s \to +\infty$,
\begin{equation} \label{eq:chg-dfred-asy}
\det(I-\gamma \mathcal{K}^{(\alpha,\beta)}_s) = e^{-\frac{2\nu }{\pi}s} (4s)^{\frac{\nu^2}{2 \pi^2}} e^{\alpha \nu}  G^2(1+\frac{i\nu}{2\pi}) G^2(1-\frac{i\nu}{2\pi}) \left[1 + O\left(\frac{1}{s}\right)\right],
\end{equation}
with $\nu = -  \ln{(1-\gamma)}$. It is worthwhile mentioning that similar large gap asymptotics for undeformed and deformed Fredholm determinants associated with more general kernels have also been derived in the literature, for example, see \cite{Dai:Xu:Zhang2021,Dai:Xu:Zhang2022,Dai:Xu:Zhang2022-2,Yao:Zhang2023}.

In this paper, we will deepen our understanding about the confluent hypergeometric process by deriving the large gap probability of \eqref{eq: F-det-deform} as $s \to +\infty$ and $\gamma \to 1^-$ in the super-exponential region. This asymptotic result will also provide valuable information about asymptotics of the eigenvalues $\lambda^{(\alpha, \beta)}_k(s)$ of the integral operator $\mathcal{K}^{(\alpha,\beta)}_s$ as $s \to +\infty$. 

The rest of this paper is organized as follows. In Section \ref{sec:result}, we state our main results for the large gap asymptotics, along with the asymptotics for eigenvalues $\lambda^{(\alpha, \beta)}_k(s)$. In Section \ref{sec:modelrhp}, we establish a connection between the deformed Fredholm determinant \eqref{eq: F-det-deform} and a model Riemann-Hilbert (RH) problem. Then, in Section \ref{analysistopsi}, we perform the Deift-Zhou steepest descent analysis to study asymptotics of this RH problem. Finally, we present the proof of our main theorem in Section \ref{proof}.

\paragraph{Notations} Throughout this paper, the following notations are frequently used.
\begin{itemize}
\item  The constants $h_k$ are given by
\begin{equation} \label{eq:hn-def}
  \int_{\mathbb{R}}\pi_k(x)\pi_n(x)w(x)dx =
  \begin{cases}
    0, \qquad & k \neq n, \\
    h_k, \qquad & k = n,
  \end{cases}
\end{equation} 
where 
\begin{equation}\label{hermiteweight}
  w(x): =w(x; \alpha, \beta) = \left\{
  \begin{aligned}
    & (-x)^{2\alpha}e^{-x^2},     & x<0,\\
    & e^{-2\pi i \beta}x^{2\alpha}e^{-x^2},     & x>0,
  \end{aligned}
  \right.
\end{equation}
with $\alpha > -\frac{1}{2}$, $\beta \in i \mathbb{R}$, and $\pi_k(x) = x^k + \cdots$ is the corresponding  $k$-th monic orthogonal polynomial.
\item The constants $\gamma_k$ are defined as
\begin{equation}\label{defgammak}
\gamma_k = -\frac{h_k}{2 \pi i}.
\end{equation} 
\item The parameter $t$ is related to $s$ through $t= -4 i s$.
\item $\mu$ is a real number with $\mu \in [-\frac 12, \frac 12).$
\end{itemize}

\section{Statement of results}
\label{sec:result}

Our main result for the large gap asymptotics is given in the following theorem.

\begin{theorem}\label{mainresult_}
Define $p:=p(\chi)$ for $\chi \in \mathbb{R}$ as an integer-valued function such that $p=1$ for $\chi < \frac{1}{2}$ and $ p = \lfloor \chi + \frac{3}{2} \rfloor$ for $\chi \geq \frac{1}{2}$. 
 Assume that, as $s \to +\infty$, $\nu = - \ln(1- \gamma) \to +\infty$ in such a way that
\begin{equation} \label{eq:nu-s-relation}
   \nu \geq 2s-(\chi + \alpha)\ln(4s),
  \end{equation}  
then we have 
  \begin{multline}\label{fdeterminant}
     \det (I - \gamma \mathcal{K}^{(\alpha, \beta)}_s) = e^{-\frac{s^2}{2}+2\alpha s} s^{-\frac 14 -\alpha^2+\beta^2} 
     \frac{\sqrt{\pi}G^2(\frac{1}{2})G(1+2\alpha)}{2^{2\alpha^2}G(1+\alpha+\beta)G(1+\alpha-\beta)} \\
     \times \prod_{k=0}^{p-1}\left(1+ \frac{h_k e^{\pi i \beta}}{2\pi} (4s)^{-\frac{1}{2}-k-\alpha}e^{2s-\nu}\right)
     \left(1+O(s^{-\frac{1}{2}}\ln s)\right),
  \end{multline}
where $\alpha>-\frac 12$, $\beta \in i \mathbb{R}$ and $h_k$ is defined in \eqref{eq:hn-def}.
\end{theorem}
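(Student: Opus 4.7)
The approach follows the Deift-Zhou nonlinear steepest descent framework outlined in Sections \ref{sec:modelrhp}--\ref{proof}. First, exploit the integrable structure of the kernel \eqref{chgkernel} to encode $\det(I-\gamma \mathcal{K}_s^{(\alpha,\beta)})$ into a $2\times 2$ Riemann-Hilbert (RH) problem for a function $\Psi(z)$ with jumps supported on a contour through $\pm s$, in which $\gamma$ appears through the jump matrices. A differential identity expresses $\partial_s \log \det(I-\gamma \mathcal{K}_s^{(\alpha,\beta)})$ as a functional of $\Psi$ evaluated near $\pm s$, so that integrating this identity in $s$ and fixing the multiplicative constant by matching \eqref{eq:chg-fred-asy} at $\gamma=1$ (that is, $\nu=+\infty$) will yield \eqref{fdeterminant}.

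Second, carry out the steepest descent through the standard chain of transformations $\Psi \to T \to S \to R$ after the rescaling $t=-4is$. In the regime \eqref{eq:nu-s-relation} the deformation $1-\gamma = e^{-\nu}$ is vanishingly small, so at leading order the global parametrix from \cite{Dei:Kra:Vas2011} (the $\gamma=1$ case) together with local parametrices at $\pm s$ built from parabolic cylinder functions remain the correct building blocks; the Hermite-type weight \eqref{hermiteweight} and its monic orthogonal polynomials $\pi_k$ with norms $h_k$ enter the subleading description near $\pm s$. The threshold \eqref{eq:nu-s-relation} is precisely what is required so that $e^{-\nu}$, when multiplied by the growing factors $(4s)^{1/2+k+\alpha}e^{2s}$ produced by the parabolic-cylinder asymptotics, yields the quantities
\[
\eta_k = \frac{h_k e^{\pi i \beta}}{2\pi}(4s)^{-\frac{1}{2}-k-\alpha}e^{2s-\nu}, \qquad 0 \leq k \leq p-1,
\]
which are $O(1)$ or larger, while $\eta_k = O(s^{-1/2}\ln s)$ for $k \geq p$, with $p=p(\chi)$ as defined in the theorem.

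Third, to accommodate the non-small $\eta_k$, modify the global parametrix by a Schlesinger-type dressing that incorporates the first $p$ orthogonal polynomials of the weight $w(x)$ explicitly into the model solution; the appearance of the constants $h_k$ in the final product is the signature that these orthogonal polynomials control the modification. The residual small-norm RH problem for $R$ then has jumps of the form $I + O(s^{-1/2}\ln s)$, uniformly in \eqref{eq:nu-s-relation}, giving $R = I + O(s^{-1/2}\ln s)$. Inserting the expansion back into the differential identity, the resulting formula for $\partial_s \log \det(I-\gamma \mathcal{K}_s^{(\alpha,\beta)})$ splits into the contribution reproducing \eqref{eq:chg-fred-asy} plus terms of the form $\partial_s \log(1+\eta_k)$; integrating in $s$ turns the latter into the product $\prod_{k=0}^{p-1}(1+\eta_k)$ appearing in \eqref{fdeterminant}, and the matching at $\nu = \infty$ fixes all remaining constants.

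The main obstacle will be uniformity of the error across the whole regime \eqref{eq:nu-s-relation}, and in particular near the boundary $\nu = 2s-(\chi+\alpha)\ln(4s)$. There $\eta_{p-1}$ is of order unity while $\eta_p$ sits right at the edge of being absorbable into the error $O(s^{-1/2}\ln s)$, which forces the global parametrix to capture the first $p$ polynomial contributions exactly rather than perturbatively. The precise choice $p=\lfloor \chi+\frac{3}{2}\rfloor$ (for $\chi\geq\frac{1}{2}$) is dictated by this balance, and showing that the Schlesinger modification does not introduce unacceptable growth through the Cauchy-operator estimates on $R$ is the most delicate technical point of the proof.
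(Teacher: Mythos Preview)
Your overall architecture---differential identity plus Deift--Zhou steepest descent plus integration---matches the paper, but two of your concrete choices are wrong and would derail the analysis.

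First, the local parametrices at the (rescaled) endpoints $\pm 1$ are built from \emph{Bessel} functions, not parabolic cylinder functions. After the normalization with the $g$-function the jump on the interval becomes anti-diagonal at leading order when $\gamma\to 1$, producing hard-edge square-root behaviour at $\pm 1$; parabolic cylinder parametrices correspond instead to soft-edge problems and would not match the local jump structure here. More importantly, the constants $h_k$ and the orthogonal polynomials $\pi_k$ for the Fisher--Hartwig weight $w(x)$ do \emph{not} enter near $\pm 1$: they arise in the local parametrix at the \emph{origin}, where the kernel carries its $|x|^{2\alpha}$ singularity and $e^{\pm\pi i\beta}$ jump. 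The paper constructs $P^{(0)}$ via the $2\times 2$ Fokas--Its--Kitaev matrix $H(\zeta)$ built from $\pi_k,\pi_{k-1}$, and it is precisely the $1/\zeta$ coefficient of $H$---containing $\gamma_k^{-1}$ and $\gamma_{k-1}$, hence $h_k$---that feeds into the final product.

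Second, the mechanism by which the first $p$ contributions are made exact is not a Schlesinger dressing of the global parametrix. Instead, the paper modifies the $g$-function itself by a term $-\frac{\chi}{t}\ln D(z)$, so that the index $k$ of the orthogonal-polynomial parametrix is chosen to match $\chi=k+\mu$, $\mu\in[-\tfrac12,\tfrac12)$. Even then the matching on $\partial U(0,\delta)$ is only $I+O(|t|^{-1/2+|\mu|})$, which fails to be uniformly small as $|\mu|\to\tfrac12$. The paper resolves this by passing to a \emph{singular} RH problem $Q$ with a simple pole at $0$, then removing the pole via an explicit rank-one ansatz $Q=\mathcal{L}+z^{-1}B^{\pm}\mathcal{L}$; the residues $B^{\pm}$ carry the quantities $\sigma_\pm$ that, after integration of $(\Psi_1)_{11}$ over the subintervals $[t_k',t_{k+1}]$ and $[t_k,t_k']$, produce exactly the factors $\ln(1+x_k|t|^{-1/2-k-\alpha}e^{|t|/2-\nu})$. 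Your proposed Schlesinger modification would not resolve the uniformity problem near $\mu=\pm\tfrac12$, and locating the orthogonal polynomials at $\pm s$ rather than at $0$ would leave the origin parametrix unaccounted for.
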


\begin{remark}
One can compare the asymptotics in \eqref{fdeterminant} with that in \eqref{eq:chg-fred-asy}. When $\gamma = 1$, we have $\nu = +\infty$. Consequently, \eqref{fdeterminant} reduces to \eqref{eq:chg-fred-asy}. When $\gamma \to 1^-$ within the region defined by \eqref{eq:nu-s-relation}, only a finite number of additional terms contribute to the asymptotics. However, the primary asymptotic behavior is still dominated by the super-exponential factor $e^{-\frac{s^2}{2}}$. When $\gamma \to 1^-$ at a slower rate, the product in \eqref{fdeterminant} becomes an infinite product. Consequently, the asymptotics change to an exponential type, which is similar to \eqref{eq:chg-dfred-asy}. Describing this transition requires delicate uniform asymptotics, which is beyond the scope of this paper. To the best of my knowledge, the rigorous analysis of this transition has only been established for the sine point process; for more details, refer to \cite{Both:Dei:Kra2018}.
\end{remark}

\begin{remark}
It is interesting to note that the constants $h_k$ appearing in the product in \eqref{fdeterminant} are related to the weight function \eqref{hermiteweight}. The weight function possesses both an algebraic and jump singularity at $x = 0$, which is known as a Fisher-Hartwig singularity. Notably, in one of the earliest papers, the confluent hypergeometric kernel \eqref{chgkernel} was originally studied in the context of a circular unitary ensemble with a Fisher-Hartwig singularity \cite{Dei:Kra:Vas2011}. It is intriguing to observe that this singularity resurges in the current context.
\end{remark}





Recalling the relation \eqref{eq: chg-bessel}  among the confluent hypergeometric kernel, the type-I Bessel kernel, and the sine kernel, one immediately obtain the following large gap asymptotics.

\begin{corollary}
Under the same condition as in Theorem \ref{mainresult_}, we have
  \begin{multline}\label{besselfred}
     \det (I - \gamma \mathcal{K}^{\Bess, 1}_s) = e^{-\frac{s^2}{2}+2\alpha s}s^{-\alpha^2 -\frac{1}{4}}
     \frac{\sqrt{\pi}G(\frac{1}{2})G(1+2\alpha)}{2^{2\alpha^2}G(1+\alpha)^2} \\
     \times \prod_{k=0}^{p-1}\left(1+ \frac{\tilde{h}_k}{2\pi} (4s)^{-\frac{1}{2}-k-\alpha}e^{2s-\nu}\right)
     \left(1+O(s^{-\frac{1}{2}}\ln s)\right),
  \end{multline}
where $\alpha > -\frac 12$ and $\tilde{h}_k$ is defined in \eqref{eq:hn-def} with the weight function replaced by $\tilde{w}(x) = w(x;\alpha, 0)$.
\end{corollary}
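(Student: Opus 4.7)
The plan is to obtain the corollary as a direct specialization of Theorem \ref{mainresult_} to the case $\beta = 0$. By \eqref{eq: chg-bessel} the kernel $K^{(\alpha,0)}(x,y)$ coincides with the type-I Bessel kernel $K^{\Bess,1}(x,y)$, so the trace-class integral operators on $L^2(-s,s)$ agree, $\mathcal{K}^{(\alpha,0)}_s = \mathcal{K}^{\Bess,1}_s$, and hence so do their Fredholm determinants. The only task is therefore to substitute $\beta = 0$ into the right-hand side of \eqref{fdeterminant} and verify that it reproduces the right-hand side of \eqref{besselfred}.

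The prefactor simplifies factor by factor: $s^{\beta^2}\big|_{\beta=0} = 1$; the phase $e^{\pi i \beta}\big|_{\beta=0} = 1$ disappears inside the product; and the symmetry $G(1+\alpha+\beta)G(1+\alpha-\beta)\big|_{\beta=0} = G(1+\alpha)^2$ collapses the Barnes-$G$ denominator. These substitutions bring the constant prefactor of \eqref{fdeterminant} to exactly the constant prefactor appearing in \eqref{besselfred}. The hypothesis \eqref{eq:nu-s-relation}, the integer-valued cutoff $p$, and the error estimate $1+O(s^{-1/2}\ln s)$ are all independent of $\beta$ and pass through unchanged.

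The last item to check is the matching of the orthogonality constants. By \eqref{eq:hn-def}--\eqref{hermiteweight}, $h_k$ is the squared $L^2(\mathbb{R})$-norm of the $k$-th monic orthogonal polynomial with respect to $w(x;\alpha,\beta)$. Setting $\beta = 0$ in \eqref{hermiteweight} yields precisely the weight $\tilde w(x) = w(x;\alpha,0)$ introduced in the corollary, so $h_k\big|_{\beta=0} = \tilde h_k$. Consequently the product in \eqref{fdeterminant} coincides term by term with that of \eqref{besselfred}. No substantive obstacle is anticipated: the argument is a straightforward verification, reflecting the ``one immediately obtains'' preceding the statement of the corollary.
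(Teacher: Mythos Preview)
Your proposal is correct and is exactly the approach the paper takes: the corollary is stated immediately after the sentence ``one immediately obtain the following large gap asymptotics'' with no further argument, so specializing Theorem~\ref{mainresult_} to $\beta=0$ via \eqref{eq: chg-bessel} is precisely what is intended. (One cosmetic point: the prefactor in \eqref{fdeterminant} carries $G^2(\tfrac12)$ while \eqref{besselfred} has $G(\tfrac12)$, so your claim of an \emph{exact} match overlooks what is evidently a typographical slip in the paper rather than a flaw in your reasoning.)
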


\begin{corollary} \label{coro-sine}
Under the same condition as in Theorem \ref{mainresult_}, we have
  \begin{equation}
  \begin{aligned}
     \det (I - \gamma \mathcal{K}^{\sin}_s) &= e^{-\frac{s^2}{2}}s^{-\frac{1}{4}}
     e^{2\zeta'(-1)} 2^{\frac{1}{12}}
     \prod_{k=0}^{p-1}\left(1+ \frac{k!2^{-3k-2}}{\sqrt{\pi}}s^{-\frac{1}{2}-k}e^{2s-\nu}\right)\\
    &\quad \times \left(1+O(s^{-\frac{1}{2}}\ln s)\right),
     \end{aligned}
  \end{equation}
  where $\zeta(\cdot)$ is the Riemann zeta-function.
\end{corollary}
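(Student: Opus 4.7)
The plan is to derive Corollary \ref{coro-sine} directly from Theorem \ref{mainresult_} by specializing to the case $\alpha = \beta = 0$, since it is straightforward to verify from \eqref{chgkernel} that $K^{(0,0)}(x,y)$ reduces to the sine kernel $K^{\sin}(x,y)$ in \eqref{eq: chg-sine}. Thus $\mathcal{K}^{(0,0)}_s = \mathcal{K}^{\sin}_s$, and the hypothesis on $\nu$ in \eqref{eq:nu-s-relation} carries over verbatim with $\alpha = 0$.

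First I would simplify the pre-factor in \eqref{fdeterminant}. Setting $\alpha = \beta = 0$ gives $e^{-s^2/2 + 2\alpha s} = e^{-s^2/2}$, $s^{-1/4 - \alpha^2 + \beta^2} = s^{-1/4}$, and using $G(1) = 1$ the Barnes $G$-function ratio collapses to $\sqrt{\pi}\, G^2(1/2)$. The next step is to convert this constant into the form stated in the corollary via the classical identity
\begin{equation}
G(1/2) = 2^{1/24} \pi^{-1/4} e^{1/8} A^{-3/2},
\end{equation}
combined with the Glaisher--Kinkelin relation $\ln A = \tfrac{1}{12} - \zeta'(-1)$. This rewrites $\sqrt{\pi}\, G^2(1/2)$ in terms of $\zeta'(-1)$ and a power of $2$, matching the constant appearing in front of the product.

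Next I would compute the coefficients $h_k$ of \eqref{eq:hn-def} in the special case $\alpha = \beta = 0$. Here the weight \eqref{hermiteweight} reduces to $w(x) = e^{-x^2}$, the standard Hermite weight on $\mathbb{R}$, so the $\pi_k$ are the monic Hermite polynomials and one has the familiar value
\begin{equation}
h_k = \frac{\sqrt{\pi}\, k!}{2^k}.
\end{equation}
Together with $e^{\pi i \beta} = 1$ this transforms each factor in the finite product in \eqref{fdeterminant} into
\begin{equation}
1 + \frac{h_k}{2\pi}(4s)^{-\frac{1}{2} - k} e^{2s - \nu}
= 1 + \frac{k!\, 2^{-3k-2}}{\sqrt{\pi}}\, s^{-\frac{1}{2}-k}\, e^{2s - \nu},
\end{equation}
which is exactly the factor stated in Corollary \ref{coro-sine}.

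Assembling these pieces gives the claimed formula, with the error term $1 + O(s^{-1/2} \ln s)$ inherited directly from Theorem \ref{mainresult_}. There is essentially no obstacle: the proof is a pure specialization plus the bookkeeping of the Barnes $G$-function constant, and the only mildly nontrivial step is tracking the factors of $2$ and $\pi$ when translating $\sqrt{\pi}\, G^2(1/2)$ into the Glaisher--Kinkelin form.
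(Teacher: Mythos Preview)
Your approach is correct and is precisely what the paper does: Corollary \ref{coro-sine} is not proved separately in the paper but is stated as an immediate specialization of Theorem \ref{mainresult_} at $\alpha=\beta=0$, using the reduction $K^{(0,0)}=K^{\sin}$ and the classical Hermite value $h_k=\sqrt{\pi}\,k!/2^k$. One small remark: carrying out the Barnes $G$-function computation you outline actually yields $\sqrt{\pi}\,G^2(1/2)=2^{1/12}e^{3\zeta'(-1)}$, consistent with \eqref{sine-asy-super-exp}, so the exponent $2\zeta'(-1)$ printed in the corollary appears to be a typo for $3\zeta'(-1)$; your method is not at fault.
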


The result in Corollary \ref{coro-sine} agrees with that in \cite[Theorem 1.12]{Both:Dei:Kra2015}.

\subsection*{Application}

The asymptotics of the deformed Fredholm determinant also give us information about the eigenvalues $\{\lambda^{(\alpha, \beta)}_k(s)\}_{k=0}^{\infty}$ of the integral operator $\mathcal{K}^{(\alpha,\beta)}_s$. Let us order them as $ \lambda^{(\alpha, \beta)}_0(s) > \lambda^{(\alpha, \beta)}_1(s) > \cdots$. Using standard operator theory techniques (for example, see \cite{Both2016}), we know that $0<\lambda^{(\alpha, \beta)}_k(s)<1$ and $\lambda^{(\alpha, \beta)}_k(s) \to 1$ as $s \to +\infty$. With Theorem \ref{mainresult_}, we are able to derive more detailed asymptotics for these eigenvalues.

\begin{corollary}
  For any fixed $k \in \mathbb{Z}_{\geq 0}$, we have, as $s \to +\infty$,
  \begin{equation}
    1 - \lambda^{(\alpha, \beta)}_k(s) = \frac{2 \pi}{h_k e^{\pi i \beta}}(4s)^{\frac{1}{2}+k+\alpha}e^{-2s}(1+o(1)),
  \end{equation}
  where $h_k$ is defined in \eqref{eq:hn-def}.
\end{corollary}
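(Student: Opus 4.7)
The plan is to extract the eigenvalue asymptotics from Theorem~\ref{mainresult_} via the spectral factorisation of the Fredholm determinant, along the lines of the corresponding analysis for the Airy process in \cite{Both2016}. Because $\mathcal{K}^{(\alpha,\beta)}_s$ is trace class and self-adjoint, one has
$$\det\bigl(I - \gamma\mathcal{K}^{(\alpha,\beta)}_s\bigr) = \prod_{j=0}^{\infty}\bigl(1 - \gamma\lambda^{(\alpha, \beta)}_j(s)\bigr).$$
Set $\delta_j(s) := 1 - \lambda^{(\alpha, \beta)}_j(s)$ and $\gamma = 1 - e^{-\nu}$, and divide by the $\gamma = 1$ specialisation to obtain
$$R(\nu, s) := \frac{\det\bigl(I - (1-e^{-\nu})\mathcal{K}^{(\alpha,\beta)}_s\bigr)}{\det(I - \mathcal{K}^{(\alpha,\beta)}_s)} = \prod_{j=0}^{\infty}\left[1 + \frac{e^{-\nu}(1 - \delta_j)}{\delta_j}\right].$$
Combining Theorem~\ref{mainresult_} with \eqref{eq:chg-fred-asy}, for any $\nu \geq 2s - (\chi + \alpha)\ln(4s)$ I obtain the alternative representation
$$R(\nu, s) = \prod_{j=0}^{p-1}\left[1 + \frac{h_j e^{\pi i \beta}}{2\pi}(4s)^{-\frac{1}{2} - j - \alpha}\, e^{2s-\nu}\right]\bigl(1 + O(s^{-1/2}\ln s)\bigr).$$

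I would then proceed by strong induction on $k$, with conjectured leading term $g_j(s) := \frac{2\pi}{h_j e^{\pi i\beta}}(4s)^{\frac{1}{2} + j + \alpha}e^{-2s}$. Fix $k \ge 0$, assume $\delta_j(s) = g_j(s)(1+o(1))$ for $j < k$, and set $\nu = 2s - (k + \alpha + \frac{1}{2})\ln(4s) - \eta$ with a free real parameter $\eta$; this corresponds to $\chi = k + \frac{1}{2}$, hence $p = k + 2$, so the right-hand side reduces to $\prod_{j=0}^{k+1}\bigl[1 + \frac{h_j e^{\pi i\beta}}{2\pi}(4s)^{k-j}e^{\eta}\bigr]\bigl(1 + O(s^{-1/2}\ln s)\bigr)$. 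Under the induction hypothesis the $j < k$ factors on both sides agree up to $1 + o(1)$ and therefore cancel, while the $j = k+1$ factor on the right is $1 + O((4s)^{-1})$, leaving the reduced identity
$$\prod_{j \ge k}\left[1 + \frac{e^{-\nu}(1 - \delta_j)}{\delta_j}\right] = \left[1 + \frac{h_k e^{\pi i \beta}}{2\pi}e^{\eta}\right]\bigl(1 + o(1)\bigr).$$
Once the tail $\prod_{j \ge k+1}$ on the left is shown to be $1 + o(1)$, the single remaining $j = k$ factor yields $e^{-\nu}/\delta_k(s) \to \frac{h_k e^{\pi i\beta}}{2\pi}e^{\eta}$; substituting $e^{-\nu} = (4s)^{k+\alpha+\frac{1}{2}}e^{-2s}e^{\eta}$ and letting $\eta$ range over $\mathbb{R}$ then produces the claimed asymptotic.

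The main obstacle is precisely the tail estimate $\prod_{j \ge k+1}\bigl[1 + \frac{e^{-\nu}(1-\delta_j)}{\delta_j}\bigr] = 1 + o(1)$. For large $j$ one has $\lambda^{(\alpha,\beta)}_j \to 0$, so $\delta_j \to 1$ and the factor is trivially $1 + O(e^{-\nu})$; the delicate regime is intermediate $j$, where $\delta_j$ may still be small. To handle it I would re-apply the same asymptotic identity at a slightly larger parameter $\chi = k + \frac{1}{2} + \varepsilon$, bringing the next few $\delta_j$ into the explicit polynomial on the right and producing upper and lower bounds of the correct order for $\delta_{k+1}, \delta_{k+2}, \dots$. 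Alternatively, one may exploit monotonicity: since $x \mapsto 1 + e^{-\nu}/x$ is decreasing in $x$ and $\delta_{k+1} > \delta_k$, each tail factor is dominated by the factor at $j = k+1$, which in turn is controlled by the right-hand side with one extra term. Either route delivers the sharp asymptotic $\delta_k(s) = \frac{2\pi}{h_k e^{\pi i \beta}}(4s)^{\frac{1}{2} + k + \alpha}e^{-2s}(1 + o(1))$ asserted in the corollary.
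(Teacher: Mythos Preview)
Your overall strategy matches the paper's: write the spectral product
\[
R(\nu,s)=\frac{\det(I-\gamma\mathcal K_s^{(\alpha,\beta)})}{\det(I-\mathcal K_s^{(\alpha,\beta)})}
=\prod_{j\ge0}\Bigl(1+e^{-\nu}\tfrac{\lambda_j}{1-\lambda_j}\Bigr),
\]
compare it with the finite product coming from Theorem~\ref{mainresult_}, and peel off one eigenvalue at a time. The upper bound $\lambda_k/(1-\lambda_k)\le A_k(1+o(1))$ with $A_k=\frac{h_ke^{\pi i\beta}}{2\pi}(4s)^{-1/2-k-\alpha}e^{2s}$ comes out exactly as in the paper, from the trivial inequality $\prod_{j\ge l}(1+e^{-\nu}\lambda_j/(1-\lambda_j))\ge1$ (what the paper writes as $\det(I+e^{-\nu}\mathcal K_l(I-\mathcal K_l)^{-1})\ge1$).

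The gap is in the matching \emph{lower} bound, i.e.\ your tail estimate $\prod_{j\ge k+1}(\cdots)=1+o(1)$. Neither of your two suggested fixes closes it. Shifting to $\chi=k+\tfrac12+\varepsilon$ leaves $p=k+2$ unchanged, so the right-hand side is the same formula and you learn nothing new about $\delta_{k+1},\delta_{k+2},\dots$. Monotonicity gives only that each tail factor is $\le 1+e^{-\nu}\lambda_{k+1}/(1-\lambda_{k+1})$, which neither bounds the infinite product (there are infinitely many factors) nor controls $\lambda_{k+1}$ itself until you already know its asymptotic---circular. The paper avoids this altogether: rather than bounding the tail, it establishes the lower bound $\lambda_p/(1-\lambda_p)\ge A_p(1+o(1))$ \emph{for every $p$ at once} by writing $1=\det/\det$ and expanding numerator and denominator via Theorem~\ref{mainresult_} with $p+1$ and $p$ factors respectively; after cancellation only the $p$-th factor survives on the right. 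With this two-sided bound in hand for $k=0$, the paper then iterates the Lidskii identity with $l=k+1$ and $p=k+2$ to propagate the equality. You should replace your tail discussion with that direct lower-bound step.
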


\begin{proof}
  First, as $0<\lambda^{(\alpha, \beta)}_k(s)<1$ , we have
  \begin{equation}
    1 + e^{-\nu}\frac{\lambda^{(\alpha, \beta)}_p}{1-\lambda^{(\alpha, \beta)}_p} \geq 1 = \frac{\det (I - \gamma \mathcal{K}^{(\alpha,\beta)}_s)}{\det (I - \gamma \mathcal{K}^{(\alpha,\beta)}_s)}, \qquad \forall p \in \mathbb{Z}_{\geq 0}.
  \end{equation}
Now, let us make use of the main expansion \eqref{fdeterminant} in Theorem \ref{mainresult_} to replace the numerator and the denominator on the right hand side of the above formula. We choose different final index in the product $\prod_{k=1}^{p-1} (\cdots)$, that is, $p' = p+1$ in the numerator and $p'' = p$ in the denominator. After canceling out the terms from $k=0$ to $p-1$ in the product of  the numerator and denominator, the only remaining term in the numerator would be the one corresponding to $k=p$. This gives us

  \begin{equation}
    1 + e^{-\nu}\frac{\lambda^{(\alpha, \beta)}_p}{1-\lambda^{(\alpha, \beta)}_p} \geq \left( 1 + \frac{h_p e^{\pi i \beta}}{2 \pi}(4s)^{-\frac{1}{2}-p-\alpha}e^{2s - \nu} \right)(1+o(1)), \qquad s \to +\infty.
  \end{equation}
  Hence, as $s \to +\infty$, we have
  \begin{equation}\label{eigenvaluegeq}
    \frac{\lambda^{(\alpha, \beta)}_p}{1-\lambda^{(\alpha, \beta)}_p} \geq \frac{h_p e^{\pi i \beta}}{2 \pi}(4s)^{-\frac{1}{2}-p-\alpha}e^{2s}(1+o(1)), \qquad p \in \mathbb{Z}_{\geq 0}.
  \end{equation}

  Next, according to Lidskii's Theorem, we have for $l \in \mathbb{Z}_{\geq 0}$
  \begin{equation}
    \begin{aligned}
    \frac{\det (I - \gamma \mathcal{K}^{(\alpha,\beta)}_s)}{\det (I - \mathcal{K}^{(\alpha,\beta)}_s)} & = \det (I + e^{-\nu}\mathcal{K}^{(\alpha,\beta)}_s (I-\mathcal{K}^{(\alpha,\beta)}_s))^{-1}) \\
    & = \prod_{k=0}^{l-1} \left(1 + e^{-\nu}\frac{\lambda^{(\alpha, \beta)}_k}{1-\lambda^{(\alpha, \beta)}_k} \right) \det(I + e^{-\nu}\mathcal{K}_l (I-\mathcal{K}_l)^{-1}),
    \end{aligned}
  \end{equation}
  where $\mathcal{K}_l = \mathcal{K}^{(\alpha,\beta)}_s \cdot P_l$ and $P_l$ is the projection operator that projects on the space of eigenvectors of $\mathcal{K}^{(\alpha,\beta)}_s$ with the corresponding eigenvalues $\{\lambda^{(\alpha, \beta)}_j: j \geq l\}$. Substituting \eqref{fdeterminant} into the above identity, we have
   \begin{equation}\label{eq:p-l}
  \begin{aligned}
 & \prod_{k=0}^{p-1}\left(1+ \frac{h_k e^{\pi i \beta}}{2\pi} (4s)^{-\frac{1}{2}-k-\alpha}e^{2s-\nu}\right) \left(1+O(s^{-\frac{1}{2}}\ln s)\right)\\
 &=\prod_{k=0}^{l-1} \left(1 + e^{-\nu}\frac{\lambda^{(\alpha, \beta)}_k}{1-\lambda^{(\alpha, \beta)}_k} \right) \det(I + e^{-\nu}\mathcal{K}_l (I-\mathcal{K}_l)^{-1}).
\end{aligned}
 \end{equation}
Then, we begin by selecting $\chi=\frac{1}{2}$, which is equivalent to setting $p=2$, and taking $l=1$. Employing the established fact that $\det(I + e^{-\nu}\mathcal{K}_l (I-\mathcal{K}_l)^{-1}) \geq 1$, we derive 
 \begin{equation}
 \left(1 + e^{-\nu}\frac{\lambda^{(\alpha, \beta)}_0}{1-\lambda^{(\alpha, \beta)}_0} \right) \le   \left(1+ \frac{h_0 e^{\pi i \beta}}{2\pi} (4s)^{-\frac{1}{2}-\alpha}e^{2s-\nu}\right) (1+o(1)),
 \end{equation} 
which gives us
  \begin{equation}
    \frac{\lambda^{(\alpha, \beta)}_0}{1-\lambda^{(\alpha, \beta)}_0} \leq  \frac{h_0 e^{\pi i \beta}}{2 \pi}(4s)^{-\frac{1}{2}-\alpha}e^{2s} (1+o(1)).
  \end{equation}
  Combined with \eqref{eigenvaluegeq}, we have
  \begin{equation}\label{eq:lambda0}
    \frac{\lambda^{(\alpha, \beta)}_0}{1-\lambda^{(\alpha, \beta)}_0}  =  \frac{h_0 e^{\pi i \beta}}{2 \pi}(4s)^{-\frac{1}{2}-\alpha}e^{2s} (1+o(1)), \qquad s \to +\infty.
  \end{equation}
Next, we select $\chi=\frac{3}{2}$, i.e. we take $p=3$ and $l=2$ in \eqref{eq:p-l}, we have
 \begin{equation}
  \begin{aligned}
 & \left(1+ \frac{h_0 e^{\pi i \beta}}{2\pi} (4s)^{-\frac{1}{2}-\alpha}e^{2s-\nu}\right) \left(1+ \frac{h_1 e^{\pi i \beta}}{2\pi} (4s)^{-\frac{3}{2}-\alpha}e^{2s-\nu}\right)\left(1+O(s^{-\frac{1}{2}}\ln s)\right)\\
 &= \left(1 + e^{-\nu}\frac{\lambda^{(\alpha, \beta)}_0}{1-\lambda^{(\alpha, \beta)}_0} \right) \left(1 + e^{-\nu}\frac{\lambda^{(\alpha, \beta)}_1}{1-\lambda^{(\alpha, \beta)}_1} \right) \det(I + e^{-\nu}\mathcal{K}_2 (I-\mathcal{K}_2)^{-1}).
\end{aligned}
 \end{equation}
Applying \eqref{eq:lambda0} and the fact that $\det(I + e^{-\nu}\mathcal{K}_l (I-\mathcal{K}_l)^{-1}) \geq 1$ gives us
 \begin{equation}
    \frac{\lambda^{(\alpha, \beta)}_1}{1-\lambda^{(\alpha, \beta)}_1}  =  \frac{h_1 e^{\pi i \beta}}{2 \pi}(4s)^{-\frac{3}{2}-\alpha}e^{2s} (1+o(1)), \qquad s \to +\infty.
  \end{equation}
  Then, iterating this approach for general $k > 0$, we have
  \begin{equation}
    \frac{\lambda^{(\alpha, \beta)}_k}{1-\lambda^{(\alpha, \beta)}_k}  =  \frac{h_k e^{\pi i \beta}}{2 \pi}(4s)^{-\frac{1}{2}-k-\alpha}e^{2s} (1+o(1)), \qquad s \to +\infty.
  \end{equation}
  This completes our proof.
\end{proof}

Following a similar derivation, we have the asymptotics for the individual eigenvalues of the trace class operators with respect to the type-I Bessel kernel and the sine kernel immediately.

\begin{corollary}
  Let $\{\lambda^{\Bess,1}_k(s)\}_{k=0}^{\infty}$ and $\{\lambda^{\sin}_k(s)\}_{k=0}^{\infty}$ denote the eigenvalues of the integral operator associated with the type-I Bessel kernel and the sine kernel, respectively. Then, as $s \to +\infty$, we have
  \begin{align}
    1-\lambda^{\Bess,1}_k(s) &= \frac{2\pi}{\tilde{h}_k}(4s)^{\frac{1}{2}+k+\alpha}e^{-2s}(1+o(1)),\label{besseigenvalue} \\
    1-\lambda^{\sin}_k(s) &= \frac{\sqrt{\pi}2^{3k+2}}{k!}s^{\frac{1}{2}+k}e^{-2s}(1+o(1)),
  \end{align}
  where $\tilde{h}_k$ is defined in \eqref{eq:hn-def} with the weight function replaced by $\tilde{w}(x) = w(x;\alpha, 0)$.
\end{corollary}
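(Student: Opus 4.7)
The plan is to obtain both asymptotics as direct specializations of the preceding corollary, exploiting the kernel reductions $K^{(\alpha,0)} = K^{\Bess,1}$ recorded in \eqref{eq: chg-bessel} together with the observation that $K^{(0,0)}$ reduces to $K^{\sin}$. Since these identities hold at the operator level on $L^2(-s,s)$, the spectra coincide: $\lambda^{\Bess,1}_k(s) = \lambda^{(\alpha,0)}_k(s)$ and $\lambda^{\sin}_k(s) = \lambda^{(0,0)}_k(s)$. The parameter hypotheses $\alpha > -\frac{1}{2}$, $\beta \in i\mathbb{R}$ of Theorem~\ref{mainresult_} accommodate both $\beta = 0$ and $\alpha = \beta = 0$, so no new RH analysis is required.

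For \eqref{besseigenvalue}, I would set $\beta = 0$ in the preceding corollary. Then $e^{\pi i \beta} = 1$, and the weight \eqref{hermiteweight} collapses to $\tilde w(x) = |x|^{2\alpha} e^{-x^2}$, whose monic orthogonal polynomial norms are by definition $\tilde h_k$. Substituting these into the formula $1 - \lambda^{(\alpha,\beta)}_k(s) = \frac{2\pi}{h_k e^{\pi i \beta}}(4s)^{\frac{1}{2}+k+\alpha}e^{-2s}(1+o(1))$ yields \eqref{besseigenvalue} immediately.

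For the sine-kernel statement I would set $\alpha = \beta = 0$. The only non-automatic ingredient is the evaluation of $h_k$ for the Gaussian weight $e^{-x^2}$ on $\mathbb{R}$: the monic orthogonal polynomials are $\pi_k(x) = 2^{-k} H_k(x)$, where $H_k$ are the classical Hermite polynomials, and the standard identity $\int_{\mathbb{R}} H_k(x)^2 e^{-x^2}\,\ud x = 2^k k!\sqrt{\pi}$ gives $h_k = \sqrt{\pi}\, k!/2^k$. Combining this with $(4s)^{\frac{1}{2}+k} = 2^{2k+1} s^{\frac{1}{2}+k}$ in the corollary's formula produces the prefactor $\sqrt{\pi}\, 2^{3k+2}/k!$ multiplying $s^{\frac{1}{2}+k} e^{-2s}$, which is precisely the asserted asymptotic. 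I do not expect any genuine obstacle here; the whole argument is bookkeeping, the one subtle point being the correct tracking of powers of $2$ in the simplification of the sine-kernel prefactor, and a brief check that the reductions $\beta = 0$ and $\alpha = \beta = 0$ are indeed compatible with the constraint \eqref{eq:nu-s-relation} used throughout the preceding corollary.
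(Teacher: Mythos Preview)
Your proposal is correct and essentially matches the paper's approach: the paper states only that the result follows ``following a similar derivation'' and gives no further details, so your direct specialization via the kernel reductions $K^{(\alpha,0)}=K^{\Bess,1}$ and $K^{(0,0)}=K^{\sin}$ is exactly the intended route. Your explicit Hermite computation $h_k=\sqrt{\pi}\,k!/2^k$ and the resulting simplification to $\sqrt{\pi}\,2^{3k+2}/k!$ are correct.
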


\section{Model RH problem}\label{sec:modelrhp}

Previous work in \cite{Dai:Zhai2022, Xu:Zhao2020} revealed the importance of the model RH problem in the analysis of the Fredholm determinant. In this section, we reconstruct a model RH problem for $\Psi(z) = \Psi(z;t)$ with $t \in -i(0,+\infty)$ as follows.
\begin{rhp}\label{modelrhp}
 \hfill
 \begin{itemize}
  \item [(a)] $\Psi(z;t)$ is analytic for $z \in \mathbb{C} \setminus \{\cup_{i=1}^7 \Sigma_i \}$, where the oriented contours are defined as
        \begin{equation*}
         \begin{aligned}
           & \Sigma_1=1+e^{\frac{\pi i}{4}}\mathbb{R}^{+}, \qquad \Sigma_2=-1+e^{\frac{3\pi i}{4}}\mathbb{R}^{+}, \qquad
          \Sigma_3=-1+e^{-\frac{3\pi i}{4}}\mathbb{R}^{+}, \qquad     \\
           & \Sigma_4=e^{-\frac{\pi i}{2}}\mathbb{R}^{+}, \qquad\Sigma_5=1+e^{-\frac{\pi i}{4}}\mathbb{R}^{+}, \qquad \Sigma_6=(0,1), \qquad \Sigma_7=(-1,0);
         \end{aligned}
       \end{equation*}
        see Figure \ref{figure1}.

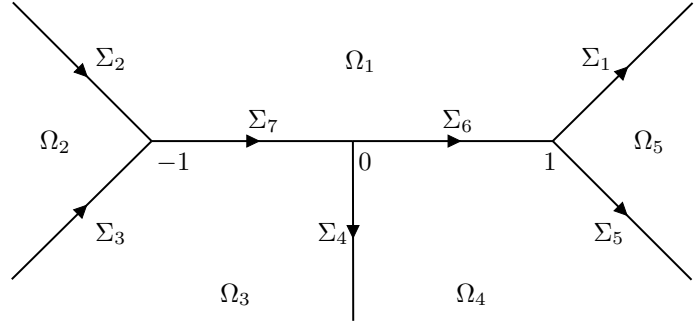
\begin{figure}[t]
	\center
\tikzset{every picture/.style={line width=0.75pt}} 

\begin{tikzpicture}[x=0.75pt,y=0.75pt,yscale=-0.7,xscale=0.7]

\draw    (162.76,129.4) -- (307.76,129.4) ;
\draw [shift={(240.26,129.4)}, rotate = 180] [fill={rgb, 255:red, 0; green, 0; blue, 0 }  ][line width=0.08]  [draw opacity=0] (8.93,-4.29) -- (0,0) -- (8.93,4.29) -- cycle    ;
\draw    (62.76,29.4) -- (162.76,129.4) ;
\draw [shift={(116.29,82.94)}, rotate = 225] [fill={rgb, 255:red, 0; green, 0; blue, 0 }  ][line width=0.08]  [draw opacity=0] (8.93,-4.29) -- (0,0) -- (8.93,4.29) -- cycle    ;
\draw    (451.76,129.4) -- (551.76,229.4) ;
\draw [shift={(505.29,182.94)}, rotate = 225] [fill={rgb, 255:red, 0; green, 0; blue, 0 }  ][line width=0.08]  [draw opacity=0] (8.93,-4.29) -- (0,0) -- (8.93,4.29) -- cycle    ;
\draw    (62,229) -- (162.76,129.4) ;
\draw [shift={(115.93,175.69)}, rotate = 135.33] [fill={rgb, 255:red, 0; green, 0; blue, 0 }  ][line width=0.08]  [draw opacity=0] (8.93,-4.29) -- (0,0) -- (8.93,4.29) -- cycle    ;
\draw    (451.76,129.4) -- (552.51,29.8) ;
\draw [shift={(505.69,76.09)}, rotate = 135.33] [fill={rgb, 255:red, 0; green, 0; blue, 0 }  ][line width=0.08]  [draw opacity=0] (8.93,-4.29) -- (0,0) -- (8.93,4.29) -- cycle    ;
\draw    (307.76,129.4) -- (452.76,129.4) ;
\draw [shift={(385.26,129.4)}, rotate = 180] [fill={rgb, 255:red, 0; green, 0; blue, 0 }  ][line width=0.08]  [draw opacity=0] (8.93,-4.29) -- (0,0) -- (8.93,4.29) -- cycle    ;
\draw    (307.76,129.4) -- (308,259) ;
\draw [shift={(307.89,199.2)}, rotate = 269.89] [fill={rgb, 255:red, 0; green, 0; blue, 0 }  ][line width=0.08]  [draw opacity=0] (8.93,-4.29) -- (0,0) -- (8.93,4.29) -- cycle    ;

\draw (309.76,136) node [anchor=north west][inner sep=0.75pt]   [align=left] {0};
\draw (443,136) node [anchor=north west][inner sep=0.75pt]   [align=left] {1};
\draw (164,136) node [anchor=north west][inner sep=0.75pt]   [align=left] {$-1$};
\draw (470,60) node [anchor=north west][inner sep=0.75pt]   [align=left] {$\Sigma_1$};
\draw (120,60) node [anchor=north west][inner sep=0.75pt]   [align=left] {$\Sigma_2$};
\draw (120,186) node [anchor=north west][inner sep=0.75pt]   [align=left] {$\Sigma_3$};
\draw (280,186) node [anchor=north west][inner sep=0.75pt]   [align=left] {$\Sigma_4$};
\draw (479,186) node [anchor=north west][inner sep=0.75pt]   [align=left] {$\Sigma_5$};
\draw (370,105) node [anchor=north west][inner sep=0.75pt]   [align=left] {$\Sigma_6$};
\draw (230,105) node [anchor=north west][inner sep=0.75pt]   [align=left] {$\Sigma_7$};
\draw (300,62) node [anchor=north west][inner sep=0.75pt]   [align=left] {$\Omega_1$};
\draw (80,120) node [anchor=north west][inner sep=0.75pt]   [align=left] {$\Omega_2$};
\draw (210,230) node [anchor=north west][inner sep=0.75pt]   [align=left] {$\Omega_3$};
\draw (380,230) node [anchor=north west][inner sep=0.75pt]   [align=left] {$\Omega_4$};
\draw (508,120) node [anchor=north west][inner sep=0.75pt]   [align=left] {$\Omega_5$};

\end{tikzpicture}        
 \caption{Contours for the model RH problem. Regions $\Omega_i$, $i =1, \cdots, 5,$ are also depicted.}
         \label{figure1}
        \end{figure}        
        

  \item [(b)] $\Psi$ has limiting values $\Psi_{\pm}(z;t)$ for $z \in \cup_{i=1}^7 \Sigma_i$, where $\Psi_+$ and $\Psi_-$ denote the values of $\Psi$ taken from the left and right side of $\Sigma_i$, respectively. Moreover, they satisfy the following jump conditions
        \begin{equation}\label{jumpforpsi}
         \Psi_+(z;t)=\Psi_-(z;t) \left\{
         \begin{aligned}
           & \begin{pmatrix} 1 & 0 \\ e^{-\pi i(\alpha-\beta)} & 1 \end{pmatrix}, & z \in \Sigma_1, \\
           & \begin{pmatrix} 1 & 0 \\ e^{\pi i(\alpha-\beta)} & 1 \end{pmatrix}, & z \in \Sigma_2, \\
           & \begin{pmatrix} 1 & -e^{-\pi i(\alpha-\beta)} \\ 0 & 1 \end{pmatrix}, & z \in \Sigma_3, \\
           & e^{2\pi i \beta \sigma_3}, & z \in \Sigma_4, \\
           & \begin{pmatrix} 1 & -e^{\pi i(\alpha-\beta)} \\ 0 & 1 \end{pmatrix}, & z \in \Sigma_5, \\
           & \begin{pmatrix} 0 & -e^{\pi i(\alpha-\beta)} \\ e^{-\pi i(\alpha-\beta)} & 1-\gamma \end{pmatrix}, & z \in \Sigma_6, \\
           & \begin{pmatrix} 0 & -e^{-\pi i(\alpha-\beta)} \\ e^{\pi i(\alpha-\beta)} & 1-\gamma \end{pmatrix}, & z \in \Sigma_7. \\
         \end{aligned}
         \right.
        \end{equation}
        For convenience, we denote the jump matrices on $\Sigma_i$ as $J_i$.

  \item [(c)] As $z \to \infty$, we have
        \begin{equation}\label{infinitybehaviorforpsi}
         \Psi(z;t)=\left(I + \frac{\Psi_1(t)}{z} + \frac{\Psi_2(t)}{z^2} + O\left(\frac{1}{z^3}\right)\right)z^{-\beta\sigma_3}e^{\frac{tz}{4}\sigma_3},
        \end{equation}
        where the branch cut of $z^\beta$ is taken along  the negative imaginary axis such that $\arg z \in (-\frac{\pi}{2}, \frac{3\pi}{2})$ and $\sigma_3$ is the third Pauli matrix $\begin{pmatrix} 1 & 0 \\ 0 & -1 \end{pmatrix}$.

  \item [(d)] As $z \to 0$ and  $z \in \Omega_i, \, i=1,3,4$, we have
  \begin{equation}\label{localbehaviorforpsinear0}
         \Psi(z;t)=\Psi^{(0)}(z;t)z^{\alpha \sigma_3} \begin{cases}
         \begin{pmatrix} 1 & (1-\gamma) \frac{\sin(\alpha + \beta)\pi}{\sin 2\alpha \pi}) \\ 0 & 1 \end{pmatrix}C_i^{(0)}, & \textrm{if } 2\alpha \notin \mathbb{N}, \vspace{5pt} \\
         \begin{pmatrix} 1 & \frac{(-1)^{2\alpha}(1-\gamma)}{\pi} \sin(\alpha + \beta)\pi \ln z \\ 0 & 1 \end{pmatrix}C_i^{(0)}, & \textrm{if } 2\alpha \in \mathbb{N},
         \end{cases}
        \end{equation}
        where both $z^\alpha$ and $\ln z$ take the principal branch with $\arg z \in(-\pi, \pi)$,  and the constant matrices $C_i^{(0)}$ are given by
        \begin{equation}
         C_1^{(0)}=I, \qquad  C_3^{(0)}=J_6^{-1}J_4^{-1}, \qquad  C_4^{(0)}=J_6^{-1}
        \end{equation}
  with jump matrices $J_i$ given in \eqref{jumpforpsi}. Here, $\Psi^{(0)}(z;t)=\Psi^{(0)}_0(t)\left(I+\Psi^{(0)}_1(t)z+O(z^2)\right)$ is analytic at $z=0$.

  \item [(e)] As $z \to 1$ and $z \in \Omega_i, \, i=1,4,5$, we have
        \begin{equation}\label{localbehaviorforpsinear1}
         \Psi(z;t)=\Psi^{(1)}(z;t)
         \begin{pmatrix} 1 & -\frac{\gamma e^{\pi i (\alpha-\beta)}}{2\pi i}\ln(z-1) \\ 0 & 1 \end{pmatrix} C_i^{(1)},
        \end{equation}
        where $\ln (z-1)$ takes the principal branch with $\arg (z-1) \in (-\pi,\pi)$, and the constant matrices $C_i^{(1)}$ are given by
        \begin{equation}
         C_1^{(1)}=I, \qquad  C_4^{(1)}=J_1^{-1}J_5^{-1}, \qquad  C_5^{(1)}=J_1^{-1}
        \end{equation}
        Here, $\Psi^{(1)}(z;t)=\Psi^{(1)}_0(t)\left(I+\Psi^{(1)}_1(t)(z-1)+O(z-1)^2)\right)$ is analytic at $z=1$.

  \item [(f)] As $z \to -1$ and $z \in \Omega_i, \, i=1,2,3$, we have
        \begin{equation}\label{localbehaviorforpsinear-1}
         \Psi(z;t)=\Psi^{(-1)}(z;t)
         \begin{pmatrix} 1 & \frac{\gamma e^{-\pi i (\alpha-\beta)}}{2\pi i}\ln(z+1) \\ 0 & 1 \end{pmatrix} C_i^{(-1)},
        \end{equation}
        where the branch cut of $\ln (z+1)$ is taken along $(-1, \infty)$ such that $\arg (z+1) \in(0,2\pi)$, and the constant matrices $C_i^{(-1)}$ are given by
        \begin{equation}
         C_1^{(-1)}=I, \qquad  C_2^{(-1)}=J_2^{-1}, \qquad  C_3^{(-1)}=J_2^{-1}J_3^{-1}.
        \end{equation}
        Here, $\Psi^{(-1)}(z;t)=\Psi^{(-1)}_0(t)\left(I+\Psi^{(-1)}_1(t)(z+1)+O(z+1)^2)\right)$ is analytic at $z=-1$.
 \end{itemize}
\end{rhp}

The following proposition has already been demonstrated in \cite{Dai:Zhai2022}, which gives the existence and uniqueness of the solution to the model RH problem for $\Psi(z;t)$.

\begin{proposition}
For $\alpha > -\frac{1}{2}$, $\beta \in i \mathbb{R}$ and $t \in -i(0, +\infty)$, there exists a unique solution to the RH problem \ref{modelrhp} for $\Psi(z;t)$. Moreover, the $(1,1)$-entry of $\Psi_1(t)$ in \eqref{infinitybehaviorforpsi} is pole-free for $t \in -i(0, +\infty)$.
\end{proposition}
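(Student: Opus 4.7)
The plan is to follow the standard Deift--Zhou template for existence, uniqueness, and pole-freeness of Riemann--Hilbert problems associated with integrable kernels. For uniqueness, I would first verify $\det\Psi(z;t)\equiv 1$. A direct computation shows that each jump matrix $J_i$ in \eqref{jumpforpsi} has determinant one (for $\Sigma_4$ one has $\det e^{2\pi i\beta\sigma_3}=e^{2\pi i\beta}\cdot e^{-2\pi i\beta}=1$; for $\Sigma_6,\Sigma_7$ the two anti-diagonal factors multiply to $1$), so $\det\Psi$ extends analytically across every $\Sigma_i$. The local forms \eqref{localbehaviorforpsinear0}--\eqref{localbehaviorforpsinear-1} consist of unimodular prefactors (the factor $z^{\alpha\sigma_3}$, the triangular matrices, and the constant matrices $C_i^{(\cdot)}$ all have determinant one), so $\det\Psi$ is analytic at $z=0,\pm1$ as well. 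Combined with $\det\Psi(z;t)=1+O(z^{-1})$ at infinity from \eqref{infinitybehaviorforpsi}, Liouville's theorem gives $\det\Psi\equiv 1$. If $\widetilde\Psi$ were a second solution, then $R(z):=\Psi(z)\widetilde\Psi(z)^{-1}$ would be jump-free, bounded at $0,\pm 1$ (the singular behaviors on both sides cancel), and tend to $I$ at infinity, so $R\equiv I$.

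For existence, I would normalize $\widehat\Psi(z):=\Psi(z)\,z^{\beta\sigma_3}e^{-tz\sigma_3/4}$ so that $\widehat\Psi\to I$ at infinity, and then invoke Zhou's vanishing lemma to conclude that the associated singular integral operator on $L^2\bigl(\cup_{i=1}^7\Sigma_i\bigr)$ is an index-zero Fredholm operator with trivial kernel, hence invertible. The key input is a Schwarz-type symmetry, available because $\alpha\in\mathbb{R}$, $\beta\in i\mathbb{R}$, and $t\in -i(0,+\infty)$: a hypothetical nontrivial solution $\widehat\Psi_0$ of the homogeneous RH problem satisfies a relation of the form $\widehat\Psi_0(z)^{*}=S\,\widehat\Psi_0(\bar z)\,S^{-1}$ for an appropriate constant matrix $S$, so that the contour integral $\oint\widehat\Psi_{0,+}(z)\widehat\Psi_{0,+}(z)^{*}\,dz$ reduces on the real axis to the integral of a positive semidefinite Hermitian form. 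Since the contour integral vanishes by Cauchy's theorem together with the $O(z^{-2})$ decay at infinity, one obtains $\widehat\Psi_{0,+}\equiv 0$ on $\mathbb{R}$, and analytic continuation then forces $\widehat\Psi_0\equiv 0$ globally.

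For the pole-freeness of the $(1,1)$-entry of $\Psi_1(t)$, I would invoke the Bertola--Malgrange isomonodromic tau-function principle: possible poles of $(\Psi_1(t))_{11}$ in $t$ correspond precisely to those $t$ for which the RH problem is unsolvable, but the vanishing lemma above applies uniformly for every $t\in -i(0,+\infty)$, ruling out such $t$ on this ray. The main obstacle is the vanishing lemma itself: the jumps $J_6,J_7$ on $(-1,1)$ carry the factor $1-\gamma\in[0,1]$ on the diagonal rather than $0$, which obstructs a direct Schwarz reflection along the real segment. I would handle this by first factorizing $J_6,J_7$ as products of triangular matrices and ``opening lenses'' above and below $(-1,1)$, so that only anti-diagonal jumps remain on the segment and the positivity argument proceeds verbatim; Fredholm theory on the Banach algebra of piecewise continuous matrices with controlled endpoint growth and the standard isomonodromic deformation framework then handle the remainder routinely.
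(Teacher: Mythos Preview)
The paper does not prove this proposition; it simply records that existence, uniqueness, and pole-freeness were established in \cite{Dai:Zhai2022}, so there is no in-paper argument to compare against directly. Your uniqueness argument via $\det\Psi\equiv 1$ and Liouville is standard and correct. Your existence argument, however, contains a concrete error in the vanishing-lemma step: the integrand you propose, $\widehat\Psi_{0,+}(z)\widehat\Psi_{0,+}(z)^{*}$, is not holomorphic in $z$ (conjugate-transpose is anti-holomorphic), so Cauchy's theorem does not apply to it. The correct auxiliary object in Zhou's argument is $H(z):=\widehat\Psi_0(z)\,\widehat\Psi_0(\bar z)^{*}$, which \emph{is} holomorphic in the upper half-plane off the contour, and one then combines $\int_{\mathbb R}H_+(x)\,dx=0$ with the jump relation to extract a positive semidefinite integrand. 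Relatedly, a vanishing solution $\widehat\Psi_0$ does not itself satisfy a symmetry $\widehat\Psi_0(z)^{*}=S\,\widehat\Psi_0(\bar z)\,S^{-1}$; it is the \emph{jump data} that must obey a Schwarz-reflection relation, and this has to be verified after your normalization (in particular, check that the $\Sigma_4$ jump cancels against the branch-cut discontinuity of $z^{\beta\sigma_3}$, and that the pairs $(\widehat J_1,\widehat J_5)$ and $(\widehat J_2,\widehat J_3)$ satisfy $\widehat J(\bar z)^{*}=\widehat J(z)^{-1}$). On the real segment the required positivity of $\widehat J_6+\widehat J_6^{*}$ is at best semidefinite, so the argument yields vanishing of only one column of $\widehat\Psi_{0,+}$ on $(-1,1)$, and a further step exploiting the triangular jumps on the rays is needed for full vanishing; your ``opening lenses'' remark gestures at this but does not carry it out.

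A more economical route, and likely the one in the cited reference, bypasses the vanishing lemma. The model RH problem is tied via the IIKS construction to the integrable kernel \eqref{chgkernel}, so its solvability for $t=-4is$ is equivalent to the invertibility of $I-\gamma\mathcal{K}^{(\alpha,\beta)}_s$ on $L^2(-s,s)$. Since $\mathcal{K}^{(\alpha,\beta)}_s$ is trace-class with all eigenvalues in $(0,1)$ (as recalled in Section~\ref{sec:result}), $I-\gamma\mathcal{K}^{(\alpha,\beta)}_s$ is invertible for every $\gamma\in[0,1]$ and every $s>0$; existence follows immediately, and analytic dependence of the resolvent on $t$ gives pole-freeness of $(\Psi_1(t))_{11}$ without further work.
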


Recalling the Lemma 8.1 obtained in \cite{Dai:Zhai2022}, we have the following representation of the Fredholm determinant in terms of the model RH problem:
\begin{equation}\label{integralforfred}
  \ln \det \left(I - \gamma \mathcal{K}_s^{(\alpha, \beta)}\right) = \int_0^{t} \left(-\frac{1}{2}(\Psi_1(\tau))_{11}-\frac{\alpha^2 - \beta^2}{\tau} \right) d\tau.
\end{equation}

\section{Asymptotic analysis of the model RH problem as $it \to +\infty$}\label{analysistopsi}

From the integral representation of the Fredholm determinant in \eqref{integralforfred}, both large-$t$ and small-$t$ asymptotics of $\Psi_1(t)$ are required to derive the deformed gap probability. In the present work, we intend to derive the large-$t$ asymptotics since the small-$t$ asymptotics is already obtained in \cite{Dai:Zhai2022}. In this section, we perform a Deift-Zhou nonlinear steepest descent method to analyze the model RH problem for $\Psi$ as $it \to +\infty$. The main idea is to covert the original RH problem into a small-norm one via a series of explicit and invertible transformations.

Precisely, we set $it \to + \infty$ and $\gamma \to 1$ such that
\begin{equation}\label{nu}
 \nu = - \ln(1-\gamma) > 0, \qquad \frac{\nu}{|t|} = \frac{1}{2} - (\chi + \alpha) \frac{\ln |t|}{|t|},
\end{equation}
where
\begin{equation}
  \qquad \chi = k+\mu \geq 0, \quad k \in \mathbb{N}_{\geq 0}, \quad \mu \in [-\frac{1}{2},\frac{1}{2}).
\end{equation}

\subsection{Normalization}

To normalize $\Psi(z;t)$ at infinity, we first need to define a $g$-function. Comparing with the one used in \cite[Eq. (4.2)]{Xu:Zhao2020}, we make a modification to reveal the influence of $\gamma$ tending to 1,
\begin{equation}\label{g-function}
  g(z) = \frac{1}{4}\sqrt{z^2-1}-\frac{\chi}{t}\ln D(z), \qquad z \in \mathbb{C} \setminus [-1,1],
\end{equation}
where
\begin{equation}\label{d-function}
  D(z) = \frac{z}{1+i\sqrt{z^2-1}},
\end{equation}
and the principal branch of $\sqrt{z^2-1}$ is taken. After a simple verification, we have the following propositions for $D(z)$.

\begin{proposition}
  \hfill
  \begin{itemize}
    \item [(i)] For $z \in (-1,1)$, we have
    \begin{equation}\label{D+-}
      D_+(z)D_-(z) = 1.
    \end{equation}
    \item [(ii)] As $z \to 0$, we have
    \begin{equation}\label{datzto0}
      D(z) = \left\{
      \begin{aligned}
        & \frac{2}{z}-\frac{z}{2}+O(z^3), &\qquad\im z>0, \\
        & \frac{z}{2}+O(z^3), &\qquad\im z<0. \\
      \end{aligned}
      \right.
    \end{equation}
    \item [(iii)] As $z \to \infty$, we have
    \begin{equation}\label{datztoinfinity}
      D(z) = -i + \frac{1}{z} + O\left(\frac{1}{z^2}\right).
    \end{equation}
  \end{itemize}
\end{proposition}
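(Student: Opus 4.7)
The proposition is a collection of three elementary identities for the auxiliary function $D(z)=z/(1+i\sqrt{z^2-1})$. The strategy is to first pin down the branch of $\sqrt{z^2-1}$, then read each claim off directly. The principal branch with cut on $[-1,1]$ that is asymptotic to $z$ as $z\to\infty$ can equivalently be written as $\sqrt{z^2-1}=\pm i\sqrt{1-z^2}$ in the upper (resp.\ lower) half plane, where $\sqrt{1-z^2}$ is the standard Taylor branch around $z=0$. This is justified by checking that both sides agree along the imaginary axis (where the cut $[-1,1]$ is never crossed) and both behave like $z$ at infinity. This identification is the only preparatory step I need.

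For part (i), the branch identification gives $(\sqrt{z^2-1})_{\pm}(x)=\pm i\sqrt{1-x^2}$ for $x\in(-1,1)$, so $D_{\pm}(x)=x/(1\mp\sqrt{1-x^2})$. Multiplying and simplifying via $1-(1-x^2)=x^2$ yields $D_+(x)D_-(x)=1$.

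For part (ii), in the upper half plane near $z=0$ the denominator is $1+i\sqrt{z^2-1}=1-\sqrt{1-z^2}=z^2/2+z^4/8+O(z^6)$ from the usual Taylor series of $\sqrt{1-z^2}$; dividing $z$ by this series produces $D(z)=2/z-z/2+O(z^3)$. In the lower half plane the denominator equals $1+\sqrt{1-z^2}=2-z^2/2+O(z^4)$, and the analogous expansion gives $D(z)=z/2+O(z^3)$. For part (iii) I expand at infinity via $\sqrt{z^2-1}=z-1/(2z)+O(1/z^3)$, so that $1+i\sqrt{z^2-1}=iz(1-i/z-1/(2z^2)+O(1/z^4))$, and a geometric-series inversion yields $D(z)=-i+1/z+O(1/z^2)$.

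\textbf{Main obstacle.} The only piece of bookkeeping that requires care is the branch choice for $\sqrt{z^2-1}$ near $z=0$, in particular keeping the signs in the upper versus lower half plane consistent. I would settle this once and for all by continuously transporting the infinity asymptotic $\sqrt{z^2-1}\sim z$ down the imaginary axis, which avoids the cut $[-1,1]$; this fixes the sign $+i\sqrt{1-z^2}$ above the real axis and $-i\sqrt{1-z^2}$ below. After this, the three verifications are routine algebra and Taylor expansion, so I do not expect any further difficulty.
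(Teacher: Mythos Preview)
Your proposal is correct and complete. The paper does not supply a proof of this proposition at all---it merely says ``After a simple verification, we have the following propositions for $D(z)$''---so your explicit computation (fixing the branch via $\sqrt{z^2-1}=\pm i\sqrt{1-z^2}$ in the upper/lower half plane and then expanding) is exactly the intended elementary verification spelled out in full.
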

With the aid of $g(z)$, we introduce the normalization transformation as
\begin{equation}\label{normalization}
  A(z) = e^{\frac{\pi i}{2}\chi \sigma_3} \Psi(z;t) e^{-tg(z)\sigma_3}.
\end{equation}
Then it is readily seen that $A(z)$ satisfies the following RH problem.
\begin{rhp}\label{rhpfora_}
  \quad
\begin{itemize}
  \item [(a)] $A(z)$ is analytic on $z \in \mathbb{C} \setminus \{\cup_{i=1}^7 \Sigma_i \}$, as illustrated in Figure \ref{figure1}.

  \item [(b)] $A(z)$ satisfies the following jump condition
  \begin{equation}
    A_+(z)=A_-(z)J_A(z), \qquad z \in \cup_{i=1}^7 \Sigma_i,
  \end{equation}
  where
  \begin{equation}\label{jumpfora_}
    J_A(z)=
    \begin{cases}
      \begin{pmatrix} 1 & 0 \\ e^{-\pi i(\alpha-\beta)}e^{-2tg(z)} & 1 \end{pmatrix}, &  z \in \Sigma_1,\\
      \begin{pmatrix} 1 & 0 \\ e^{\pi i(\alpha-\beta)}e^{-2tg(z)} & 1 \end{pmatrix}, &  z \in \Sigma_2,\\
      \begin{pmatrix} 1 & -e^{-\pi i(\alpha-\beta)}e^{2tg(z)} \\ 0 & 1 \end{pmatrix}, &  z \in \Sigma_3,\\
      e^{2\pi i \beta \sigma_3}, &  z \in \Sigma_4,\\
      \begin{pmatrix} 1 & -e^{\pi i(\alpha-\beta)}e^{2tg(z)} \\ 0 & 1 \end{pmatrix}, &  z \in \Sigma_5,\\
      \begin{pmatrix} 0 & -e^{\pi i(\alpha-\beta)} \\ e^{-\pi i(\alpha-\beta)} & (1-\gamma)e^{t(g_+(z)-g_-(z))} \end{pmatrix}, &  z \in \Sigma_6,\\
      \begin{pmatrix} 0 & -e^{-\pi i(\alpha-\beta)} \\ e^{\pi i(\alpha-\beta)} & (1-\gamma)e^{t(g_+(z)-g_-(z))} \end{pmatrix}, &  z \in \Sigma_7.\\
    \end{cases}
  \end{equation}

 \item [(c)] As $z \to \infty$, we have
  \begin{equation}\label{infinitybehaviorfora_}
   A(z)=\left(I + \frac{1}{z}\left(e^{\frac{\pi i}{2}\chi \sigma_3} \Psi_1 e^{-\frac{\pi i}{2}\chi \sigma_3}+(\frac{t}{8}+i \chi)\sigma_3\right)
    + O\left(\frac{1}{z^2}\right)\right)z^{-\beta\sigma_3}.
   \end{equation}

 \item [(d)] As $z \to 0, \pm1$, $\left(e^{-\frac{\pi i}{2}\chi\sigma_3}A(z)e^{tg\sigma_3} \right)$ satisfies the same local behaviors as $\Psi(z)$; see \eqref{localbehaviorforpsinear0}, \eqref{localbehaviorforpsinear1} and \eqref{localbehaviorforpsinear-1}.
 \end{itemize}
 \end{rhp}

\subsection{Global parametrix}

Note that as $it \to +\infty$, the jump matrices on $\Sigma_1, \Sigma_2, \Sigma_3, \Sigma_5$ tend to the identity matrix exponentially fast. Then let us take a close look at the $(2,2)$-entries of jump matrices on $\Sigma_6, \Sigma_7$. With \eqref{nu} and \eqref{g-function}, we have for $ z \in (-1,1)$
\begin{equation}
    (1-\gamma)e^{t(g_+(z)-g_-(z))} = \exp \left(-\frac{|t|}{2}(1-\sqrt{1-z^2}) +(\chi+\alpha)\ln |t| - \chi \ln \left(\frac{D_+(z)}{D_-(z)}\right) \right).
\end{equation}
Apparently, when $|z| > \delta$, these entries tend to 0 exponentially fast. 
Therefore, we consider the following RH problem for $P^{(\infty)}(z)$.

\begin{rhp}\label{rhpforpinfinity_}
\quad
 \begin{itemize}
  \item [(a)] $P^{(\infty)}(z)$ is defined and analytic in $\mathbb{C} \setminus \{\Sigma_4
         \cup \Sigma_6 \cup \Sigma_7\}$.
  \item [(b)] $P^{(\infty)}(z)$ satisfies the following jump conditions
        \begin{equation}\label{jumpforpinfinity_}
         P^{(\infty)}_+(z)=P^{(\infty)}_-(z)
          \begin{cases}
           \begin{pmatrix} 0 & -e^{\pi i(\alpha-\beta)} \\ e^{-\pi i(\alpha-\beta)} & 0 \end{pmatrix}, &  z \in \Sigma_6,\\
           \begin{pmatrix} 0 & -e^{-\pi i(\alpha-\beta)} \\ e^{\pi i(\alpha-\beta)} & 0 \end{pmatrix}, &  z \in \Sigma_7.\\
           e^{2\pi i \beta \sigma_3},   & z \in \Sigma_4.  \\
         \end{cases}
        \end{equation}
  \item [(c)] As $z \to \infty$, we have
        \begin{equation}\label{infinitybehaviorforpinfinity_}
         P^{(\infty)}(z)=\left(I + O\left(\frac{1}{z}\right)\right)z^{-\beta\sigma_3},
        \end{equation}
        where the branch of $z^{\beta}$ is taken along the negative imaginary axis such that $\arg z \in (-\frac{\pi}{2}, \frac{3\pi}{2})$.
 \end{itemize}
\end{rhp}

The solution to the above RH problem is given explicitly as
\begin{equation}\label{pinfinity_}
\begin{aligned}
  P^{(\infty)}(z) &= 2^{\beta \sigma_3} e^{\frac{\pi i}{2}(\mu-\beta) \sigma_3}
   \frac{1}{\sqrt{2}}\begin{pmatrix} 1 & -i \\ -i & 1 \end{pmatrix}
  \left(\frac{z-1}{z+1}\right)^{\frac{1}{4}\sigma_3}
  \frac{1}{\sqrt{2}}\begin{pmatrix} 1 & i \\ i & 1 \end{pmatrix}\\
  & \quad\times (z+\sqrt{z^2-1})^{-\beta \sigma_3} e^{\frac{\pi i}{2}\beta \sigma_3}
  \left(\frac{-i+\sqrt{z^2-1}}{z}\right)^{\alpha \sigma_3}D(z)^{\mu \sigma_3},
\end{aligned}
\end{equation}
where the principal branches are taken for $\sqrt{z^2-1}$, $\left(\frac{z-1}{z+1}\right)^{\frac{1}{4}}$, $(\cdot)^{\alpha}$ and $(\cdot)^{\mu}$ such that $\arg (z\pm 1) \in (-\pi, \pi)$ and $\arg z \in (-\pi, \pi)$, while the branch of $z^{\beta}$ is taken along the negative imaginary axis such that $\arg z \in (-\frac{\pi}{2}, \frac{3\pi}{2})$.

\begin{remark}
  Comparing our global parametrix with the one in the undeformed case \cite[Eq. (4.7)]{Xu:Zhao2020}, the only difference is the factor $D(z)^{\mu \sigma_3}$, which is caused by the different choices of $g$-functions. Since we do not impose conditions at the endpoints $0$ and $\pm 1$, the solution to the above RH problem is not unique. It is worth noting that the factor $D(z)^{\mu \sigma_3}$ does not change the jump conditions or the asymptotic behavior of $P^{(\infty)}(z)$ as $z \to \infty$;  see properties of $D(z)$ in \eqref{D+-} and \eqref{datztoinfinity}. This factor is introduced in order to align with the construction of local parametrix near the origin in Section \ref{sec:local-parametrix}.
\end{remark}

The convergence of the global parametrix and $A(z)$ is not uniform near the endpoints $\pm1, 0$, thus we need to further construct the local parametrix near $\pm1, 0$ in the subsequent three subsections. The local parametrices near $\pm1$ are given in terms of the well-known Bessel parametrix, whereas the one near $0$ is given in terms of an orthogonal polynomial parametrix with respect to a weight function which contains a Fisher-Hartwig singularity.

\subsection{Local parametrix near $-1$}

Let $U(z_0, \delta)$ denote the circle centering at $z_0$ with radius $\delta$. First, we intend to look for a local parametrix $P^{(-1)}(z)$ satisfying the following RH problem.

\begin{rhp}\label{rhpforp-1_}
 \quad
 \begin{itemize}
  \item [(a)] $P^{(-1)}(z)$ is defined and analytic in $\overline{U(-1,\delta)} \setminus \{\Sigma_2 \cup \Sigma_3 \cup \Sigma_7 \}$.
  \item [(b)] $P^{(-1)}(z)$ satisfies the following jump condition
        \begin{equation}\label{jumpforp-1}
         P^{(-1)}_+(z)=P^{(-1)}_-(z)J_{A}(z),
         \qquad z \in U(-1,\delta) \cap \{\Sigma_2 \cup \Sigma_3 \cup \Sigma_7 \},
        \end{equation}
        where $J_{A}(z)$ is given in \eqref{jumpfora_}.
  \item [(c)] As $ it \to +\infty$, we have the matching condition
        \begin{equation}\label{matchforp-1_}
         P^{(-1)}(z)= \left(I + O\left(\frac{1}{t}\right)\right)P^{(\infty)}(z), \qquad z \in \partial U(-1, \delta).
        \end{equation}
 \end{itemize}
\end{rhp}

Let us first introduce a conformal mapping near $z=-1$:
\begin{equation}\label{conformalmappingat-1_}
    f^{(-1)}(z) =
      -\left(\frac{1}{4}\sqrt{z^2-1}-\frac{k}{t}(\ln D(z)+\pi i)\right)^2, \qquad z \in U(-1, \delta).
\end{equation}
It is directly seen
\begin{equation}
  f^{(-1)}(z) = \frac{z+1}{8}\left(1-\frac{4k}{|t|}\right)^2\left(1+O\left(z+1\right)^{\frac{1}{2}} \right), \qquad z \to -1.
\end{equation}
This indicates that $f^{(-1)}(z)$ maps the neighborhood near $-1$ on $z$-plane to the neighborhood of 0 on $f^{(-1)}$-plane with no change of direction. Now we are ready to construct the $P^{(-1)}$ with the classical Bessel parametrix.

\begin{lemma}\label{lemmap-1}
  Let $\Phi_B(\zeta)$ be the Bessel parametrix given in Appendix \ref{bessel}. Then, the solution to the RH problem \ref{rhpforp-1_} for $P^{(-1)}$ is given by
  \begin{equation}\label{p-1_}
    P^{(-1)}(z) = E^{(-1)}(z)\Phi_B(|t|^2 f^{(-1)}(z))
    \begin{pmatrix} 1 & \frac{\gamma-1}{2 \pi i}\ln(z+1) \\ 0 & 1 \end{pmatrix} K^{(-1)}(z),
  \end{equation}
  where the branch cut of $\ln (z+1)$ is taken along $(-1, +\infty)$ such that $\arg (z+1) \in (0, 2\pi)$, $\Phi_B$ is defined in \eqref{jb_}, and $K^{(-1)}(z)$ is given as
  \begin{equation}
    K^{(-1)}(z) =
      \begin{cases}
        e^{\frac{\pi i}{2}(\alpha-\beta)\sigma_3}e^{-tg(z)\sigma_3},
        \  & \arg z \in (0, \frac{2}{3}\pi),    \\
        \begin{pmatrix} 1 & 0 \\ -1 & 1 \end{pmatrix}e^{\frac{\pi i}{2}(\alpha-\beta)\sigma_3}e^{-tg(z)\sigma_3},
        \  & \arg z \in (\frac{2}{3}\pi, \pi),    \\
        \begin{pmatrix} 0 & 1 \\ -1 & 1 \end{pmatrix}e^{\frac{\pi i}{2}(\alpha-\beta)\sigma_3}e^{-tg(z)\sigma_3},
        \  & \arg z \in (-\pi, -\frac{2}{3}\pi),    \\
        \begin{pmatrix} 0 & 1 \\ -1 & 0 \end{pmatrix}e^{\frac{\pi i}{2}(\alpha-\beta)\sigma_3}e^{-tg(z)\sigma_3},
        \  & \arg z \in (-\frac{2}{3}\pi, 0).  \\
      \end{cases}
  \end{equation}
 Here $E^{(-1)}(z)$ is an analytic prefactor defined as
  \begin{equation}\label{e-1_}
  \begin{aligned}
    E^{(-1)}(z) &=
     P^{(\infty)}(z)e^{-\frac{\pi i}{2}(\alpha-\beta)\sigma_3}D(z)^{-\mu \sigma_3}e^{k\pi i \sigma_3}\\
     & \quad \times
     \begin{cases}
     \frac{1}{\sqrt{2}}
     \begin{pmatrix} 1 & -i \\ -i & 1 \end{pmatrix}|t|^{\frac{\sigma_3}{2}} (f^{(-1)}(z))^{\frac{1}{4}\sigma_3},  &\im  z > 0, \\
     \frac{1}{\sqrt{2}}
     \begin{pmatrix} i & -1 \\ 1 & -i \end{pmatrix}|t|^{\frac{\sigma_3}{2}}(f^{(-1)}(z))^{\frac{1}{4}\sigma_3},  &\im  z < 0,
    \end{cases}
    \end{aligned}
  \end{equation}
  where the principal branches are taken for $(f^{(-1)}(z))^{\frac{1}{4}}$ and $D(z)^{-\mu}$ such that $\arg (z\pm 1) \in (-\pi, \pi)$ and $\arg z \in (-\pi, \pi)$. 
\end{lemma}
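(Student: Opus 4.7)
The construction is the standard Deift–Zhou local-parametrix template. Three things must be verified: (i) the jump relation $P^{(-1)}_+ = P^{(-1)}_- J_A$ on $U(-1,\delta) \cap (\Sigma_2 \cup \Sigma_3 \cup \Sigma_7)$; (ii) analyticity of the prefactor $E^{(-1)}(z)$ in $U(-1,\delta)$; and (iii) the matching condition \eqref{matchforp-1_} as $it \to +\infty$. For (i), one first checks that $f^{(-1)}$ is conformal on $U(-1,\delta)$, maps $(-1,-1+\delta)$ onto $\mathbb{R}^-$, and maps the local parts of $\Sigma_2, \Sigma_3$ onto the Bessel jump rays $\arg \zeta = \pm 2\pi/3$. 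The piecewise constant matrix $K^{(-1)}(z)$, together with the dressing $e^{-tg(z)\sigma_3}$ it carries, is then exactly what is needed to conjugate each Bessel jump from \eqref{jb_} into the corresponding target jump in \eqref{jumpfora_} on $\Sigma_2$ and $\Sigma_3$. On $\Sigma_7$ the branch choice $\arg(z+1) \in (0,2\pi)$ gives $\ln_+(z+1) - \ln_-(z+1) = -2\pi i$, so the triangular log-factor contributes an additive $(1-\gamma)$ in the upper-right entry; combining this with the Bessel jump on $\mathbb{R}^-$ and the identity $D_+ D_- = 1$ from \eqref{D+-} reproduces $(1-\gamma) e^{t(g_+-g_-)}$ in the $(2,2)$-entry of $J_A$.

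For (ii), the only places $E^{(-1)}$ could fail to be analytic in $U(-1,\delta)$ are across $(-1,0) = \Sigma_7$, across $(-1-\delta,-1)$ where the piecewise $\im z \gtrless 0$ definition might jump, and at $z=-1$ itself. The jump of $P^{(\infty)}$ on $\Sigma_7$ from \eqref{jumpforpinfinity_} is cancelled precisely by the combined jump of $D(z)^{-\mu\sigma_3}$ (via $D_+ D_- = 1$ once more) and the switch between the two constant matrices multiplying $(f^{(-1)}(z))^{1/4\sigma_3}$ in \eqref{e-1_}. Continuity across the segment left of $-1$ is immediate because none of $P^{(\infty)}$, $D^{-\mu}$, $f^{(-1)}$ has a cut there, while the two upper/lower constant matrices in \eqref{e-1_} are related by exactly the quarter-turn produced by the jump of $(f^{(-1)})^{1/4}$ on that ray. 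Finally, the $(z+1)^{-1/4}$ singularity of $P^{(\infty)}$ at $-1$ is absorbed by $(f^{(-1)}(z))^{1/4} \sim (z+1)^{1/4}$, so $E^{(-1)}$ extends analytically through $z=-1$.

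For (iii), I would substitute the large-$\zeta$ expansion of $\Phi_B$ recorded in Appendix \ref{bessel}, which has the form $\Phi_B(\zeta) = (2\pi)^{-\sigma_3/2} \zeta^{-\sigma_3/4} M_0 (I + O(\zeta^{-1}))$ for an explicit constant matrix $M_0$, into \eqref{p-1_} with $\zeta = |t|^2 f^{(-1)}(z)$. The algebraic powers of $|t|$ and $f^{(-1)}(z)$ collapse exactly against the explicit choices in \eqref{e-1_} so that the leading term equals $P^{(\infty)}(z)$, while the sub-leading Bessel correction yields $P^{(\infty)}(z)(I + O(|t|^{-1}))$ uniformly on $\partial U(-1,\delta)$. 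The triangular log-factor contributes only $O((1-\gamma) \ln s)$, which is absorbed into this error since $1-\gamma$ decays super-exponentially under \eqref{eq:nu-s-relation}. The principal technical obstacle is step (i) on $\Sigma_7$: it requires reconciling three independent ingredients simultaneously — the nontrivial Bessel jump on $\mathbb{R}^-$, the multiplicative $(1-\gamma)$ produced by the log factor, and the oscillation $e^{t(g_+-g_-)}$ coming from the $g$-dressing — and tracking the branches of $D$, $z^\beta$, $z^\alpha$ and $(f^{(-1)})^{1/4}$ consistently across all the contours meeting at $-1$. Everything else reduces to routine matrix manipulation.
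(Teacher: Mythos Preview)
Your outline is correct and follows the paper's own approach: verify analyticity of $E^{(-1)}$ by jump cancellation and removability at $z=-1$, check the jumps on $\Sigma_2,\Sigma_3,\Sigma_7$, and confirm the matching via the large-argument expansion of $\Phi_B$, with the triangular log-factor absorbed because $1-\gamma$ is super-exponentially small.

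One bookkeeping point is misidentified, however, and it is exactly the ``principal technical obstacle'' you flag at the end. From the local expansion $f^{(-1)}(z)=\tfrac{z+1}{8}\bigl(1-\tfrac{4k}{|t|}\bigr)^{2}(1+\cdots)$ the conformal map sends $\Sigma_7=(-1,-1+\delta)$ to the \emph{positive} real axis, not to $\mathbb{R}^-$; hence $\Phi_B\bigl(|t|^{2}f^{(-1)}(z)\bigr)$ has \emph{no} jump across $\Sigma_7$. It is the segment $(-1-\delta,-1)$ (which carries no jump for $P^{(-1)}$) that lands on $\mathbb{R}^-$, and there the Bessel jump $\left(\begin{smallmatrix}1&1\\0&1\end{smallmatrix}\right)$ cancels against the sector-change of $K^{(-1)}$, keeping $P^{(-1)}$ continuous. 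On $\Sigma_7$ itself the three ingredients in the paper's computation are: (a) the $K^{(-1)}$ discontinuity, which after conjugation by $e^{\frac{\pi i}{2}(\alpha-\beta)\sigma_3}$ produces the anti-diagonal $\left(\begin{smallmatrix}0&-1\\1&0\end{smallmatrix}\right)$; (b) the $-2\pi i$ jump of $\ln(z+1)$, giving the $(1-\gamma)$ entry; and (c) the relation $g_{+}+g_{-}=0$ on $(-1,1)$, which is where $D_{+}D_{-}=1$ actually enters. Likewise, $\Phi_B$ (as opposed to the dressed $\Phi^{B}$ of \eqref{bessel parametrix}) has no jumps at $\arg\zeta=\pm\tfrac{2\pi}{3}$, so the $\Sigma_2,\Sigma_3$ jumps come entirely from the piecewise definition of $K^{(-1)}$, not from ``conjugating Bessel jumps.'' None of this changes the structure of your argument, but if you try to write the $\Sigma_7$ verification out with a Bessel jump present you will not recover $J_A$.
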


\begin{proof}
  We first demonstrate the analyticity of $E^{(-1)}(z)$. According to the definition in \eqref{e-1_}, the possible jump contour is $(-1-\delta, -1+\delta)$. For $z \in (-1, -1+\delta)$, it follows from \eqref{d-function}, \eqref{pinfinity_} and \eqref{conformalmappingat-1_} that
  \begin{equation}
  \begin{aligned}
       \left(E_-^{(1)}(z)\right)^{-1}E_+^{(1)}(z)
     &= \frac{1}{2}(f^{(-1)}(z))^{-\frac{1}{4}\sigma_3}\begin{pmatrix} -i & 1 \\ -1 & i \end{pmatrix}
     \begin{pmatrix} 0 & -1 \\ 1 & 0 \end{pmatrix}\begin{pmatrix} 1 & -i \\ -i & 1 \end{pmatrix}(f^{(-1)}(z))^{\frac{1}{4}\sigma_3}\\
      &= I.
     \end{aligned}
  \end{equation}
  For $z \in (-1-\delta,-1)$, the verification is similar. When $z=-1$, by \eqref{datztoinfinity}, \eqref{pinfinity_}, \eqref{e-1_}, it is easily seen that $E^{(-1)}(-1)=O(1)$. Therefore, $z=-1$ is a removable singularity and $E^{(-1)}(z)$ is analytic in $U(-1,\delta)$.

  The jump conditions for $P^{(-1)}(z)$ are straightforward to verify by applying \eqref{g-function}, \eqref{conformalmappingat-1_} and \eqref{p-1_}. For $z \in \Sigma_7$, we have
  \begin{equation}
   \begin{aligned}
     & \left(P_-^{(-1)}(z)\right)^{-1}P_+^{(-1)}(z)  \\
     & = e^{tg_-(z)\sigma_3}e^{-\frac{\pi i}{2}(\alpha-\beta)\sigma_3}
     \begin{pmatrix} 0 & -1 \\ 1 & 0 \end{pmatrix}
     \begin{pmatrix} 1 & \frac{\gamma-1}{2 \pi i}(\ln(z+1)_+ - \ln(z+1)_-) \\ 0 & 1 \end{pmatrix}
     e^{\frac{\pi i}{2}(\alpha-\beta)\sigma_3}\\
     &\quad \times e^{-tg_+(z)\sigma_3}   \\
     & = \begin{pmatrix} 0 & -e^{-\pi i(\alpha-\beta)} \\ e^{\pi i(\alpha-\beta)} & (1-\gamma)e^{t(g_+(z)-g_-(z))} \end{pmatrix},
   \end{aligned}
  \end{equation}
  which matches with the jump of $A(z)$ on $\Sigma_7$. For $z$ on the other jump contours, the jump condition can be verified by the similar procedure.

  Finally, let us check the matching condition. As $it \to +\infty$, considering $\arg (z+1) \in (0,\frac{2}{3}\pi)$, it follows from \eqref{e-1_} and \eqref{besselinfinity} that
  \begin{equation}
   \begin{aligned}
     & P^{(-1)}(z)\left( P^{(\infty)}(z) \right)^{-1}       \\
     & = P^{(\infty)}(z)e^{-\frac{\pi i}{2}(\alpha-\beta)\sigma_3}D(z)^{-\mu\sigma_3}e^{k\pi i \sigma_3}\left(I + O\left(\frac{1}{t}\right)\right) \\
     & \qquad \times
     \begin{pmatrix} 1 & \frac{\gamma-1}{2\pi i} e^{2|t|\sqrt{f^{(-1)}(z)}} \ln (z+1) \\ 0 & 1 \end{pmatrix}
     D(z)^{\mu\sigma_3}e^{-k\pi i \sigma_3}e^{\frac{\pi i}{2}(\alpha-\beta)\sigma_3}\left(P^{(\infty)}(z)\right)^{-1}\\
     & = I + O\left(\frac{1}{t}\right).
   \end{aligned}
  \end{equation}
Here we apply the facts that $\gamma$ tends to 1 exponentially fast and $P^{(\infty)}(z)D(z)^{-\mu\sigma_3}$ is independent of $t$. For $z$ in other sectors, the matching condition can be verified following the same procedure. This completes our proof.
\end{proof}

\subsection{Local parametrix near 1}
Similar to the scenario near $z=-1$, we intend to find a local parametrix in $U(1,\delta)$ solving the following RH problem for $P^{(1)}(z)$.
\begin{rhp}\label{rhpforp1_}
 \quad
 \begin{itemize}
  \item [(a)] $P^{(1)}(z)$ is defined and analytic in $\overline{U(1,\delta)} \setminus \{\Sigma_1 \cup \Sigma_5 \cup \Sigma_6 \}$.

  \item [(b)] $P^{(1)}(z)$ satisfies the following jump condition
        \begin{equation}\label{jumpforp1_}
         P^{(1)}_+(z)=P^{(1)}_-(z)J_{A}(z),
         \qquad z \in U(1,\delta) \cap \{\Sigma_1 \cup \Sigma_5 \cup \Sigma_6 \}.
        \end{equation}
        where $J_{A}(z)$ is given in \eqref{jumpfora_}.

  \item [(c)] As $it \to +\infty$, we have the matching condition
        \begin{equation}\label{matchforp1_}
         P^{(1)}(z)= \left(I + O\left(\frac{1}{t}\right)\right)P^{(\infty)}(z), \qquad z \in \partial U(1, \delta).
        \end{equation}
 \end{itemize}
\end{rhp}

This parametrix construction is similar to that in $U(-1, \delta)$, which is also given in terms of the Bessel functions. We introduce a conformal mapping near $z=1$:
\begin{equation}\label{conformalmappingat1_}
    f^{(1)}(z) = -\left(\frac{1}{4}\sqrt{z^2-1}-\frac{k}{t}\ln D(z)\right)^2, \qquad z \in U(-1, \delta).
\end{equation}
Then we have
\begin{equation}
  f^{(1)}(z) = -\frac{z-1}{8}\left(1-\frac{4k}{|t|}\right)^2\left(1+O\left(z-1\right)^{\frac{1}{2}} \right), \qquad z \to 1.
\end{equation}
Apparently, $f^{(1)}(z)$ maps the neighborhood near 1 on $z$-plane to the neighborhood near 0 on $f^{(1)}$-plane with a rotation of $\pi$. Then, the solution to the  above RH problem is given in the following lemma.

\begin{lemma}
  Let $\Phi_B(\zeta)$ be the Bessel parametrix given in Appendix \ref{bessel}. Then, the solution to the above RH problem for $P^{(1)}(z)$ is given by
  \begin{equation}\label{p1_}
    P^{(1)}(z) = E^{(1)}(z)\sigma_3 \Phi_B(|t|^2 f^{(1)}(z))
    \begin{pmatrix} 1 & \frac{\gamma-1}{2 \pi i}\ln(z-1) \\ 0 & 1 \end{pmatrix}\sigma_3  K^{(1)}(z),
  \end{equation}
 where the principal branch of $\ln (z-1)$ is taken, $\Phi_B$ is given in \eqref{jb_}, and $K^{(1)}(z)$ is given as
  \begin{equation}
    K^{(1)}(z) =
      \begin{cases}
        \begin{pmatrix} 1 & 0 \\ -1 & 1 \end{pmatrix}e^{-\frac{\pi i}{2}(\alpha-\beta)\sigma_3}e^{-tg(z)\sigma_3},
        \quad & \arg z \in (0, \frac{\pi}{3}),    \\
        e^{-\frac{\pi i}{2}(\alpha-\beta)\sigma_3}e^{-tg(z)\sigma_3},
        \quad & \arg z \in (\frac{\pi}{3}, \pi),    \\
        \begin{pmatrix} 0 & 1 \\ -1 & 0 \end{pmatrix}e^{-\frac{\pi i}{2}(\alpha-\beta)\sigma_3}e^{-tg(z)\sigma_3},
        \quad & \arg z \in (-\pi, -\frac{\pi}{3}),    \\
        \begin{pmatrix} 0 & 1 \\ -1 & 1 \end{pmatrix}e^{-\frac{\pi i}{2}(\alpha-\beta)\sigma_3}e^{-tg(z)\sigma_3},
        \quad & \arg z \in (-\frac{\pi}{3}, 0).  \\
      \end{cases}
  \end{equation}
 Moreover, $E^{(1)}(z)$ is an analytic prefactor defined as
  \begin{equation}\label{e1_}
    E^{(1)}(z) = P^{(\infty)}(z)e^{\frac{\pi i}{2}(\alpha-\beta)\sigma_3}D(z)^{-\mu\sigma_3}
    \begin{cases}
     \frac{1}{\sqrt{2}}\begin{pmatrix} 1 & i \\ i & 1 \end{pmatrix}
      |t|^{\frac{\sigma_3}{2}}(f^{(1)}(z))^{\frac{1}{4}\sigma_3},  &\im  z > 0, \\
     \frac{1}{\sqrt{2}}\begin{pmatrix} -i & -1 \\ 1 & i \end{pmatrix}
      |t|^{\frac{\sigma_3}{2}}(f^{(1)}(z))^{\frac{1}{4}\sigma_3},  &\im  z < 0,
    \end{cases}
  \end{equation}
where $(f^{(1)}(z))^{\frac{1}{4}}$ and $D(z)^{-\mu}$ take the principal branches.
\end{lemma}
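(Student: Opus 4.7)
The plan is to mirror the three-step verification used in the proof of Lemma \ref{lemmap-1} for the parametrix $P^{(-1)}(z)$ near $z=-1$, adapted to the rotation-by-$\pi$ of the conformal map $f^{(1)}$ at $z=1$. First I would establish analyticity of the prefactor $E^{(1)}(z)$ in $U(1,\delta)$. The only possible jump contour lies on the real segment through $z=1$. On $(1,1+\delta)$ the factor $P^{\infty}(z) e^{\frac{\pi i}{2}(\alpha-\beta)\sigma_3} D(z)^{-\mu\sigma_3}$ has no jump, so the jump of $E^{(1)}$ comes only from $(f^{(1)}(z))^{\frac{1}{4}\sigma_3}$; on $(1-\delta,1)$ one also picks up the jump of $P^{(\infty)}$ across $\Sigma_6$ (the off-diagonal flip with sign $e^{\pi i(\alpha-\beta)}$). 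A direct computation, exactly analogous to the one for $E^{(-1)}$, shows the products of matrix factors telescope to the identity in both cases. Finally, the expansion
\begin{equation*}
f^{(1)}(z) = -\tfrac{z-1}{8}\Bigl(1-\tfrac{4k}{|t|}\Bigr)^{2}\bigl(1+O(z-1)^{1/2}\bigr)
\end{equation*}
together with the behavior of $P^{(\infty)}(z)$ near $z=1$ (which has a $(z-1)^{-1/4}$ singularity from the Szeg\H{o} factor) shows that $z=1$ is a removable singularity, so $E^{(1)}(z)$ is analytic in the full disk.

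Next I would verify the jump conditions \eqref{jumpforp1_} on $\Sigma_1,\Sigma_5,\Sigma_6$. Since $E^{(1)}$ is analytic, the jumps of $P^{(1)}$ arise only from $\Phi_B(|t|^2 f^{(1)}(z))$, the middle triangular log-factor, and $K^{(1)}(z)$. The cases $\arg z \in (0,\pi/3)$, $(\pi/3,\pi)$, $(-\pi,-\pi/3)$, $(-\pi/3,0)$ correspond to the four sectors of the Bessel model contour pulled back via $f^{(1)}$, and in each sector the conjugation by $\sigma_3$ in \eqref{p1_} implements the change of the rotation direction of $f^{(1)}$ compared to $f^{(-1)}$. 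The verification on $\Sigma_6$ is the nontrivial one: the discontinuity $(\ln(z-1))_+-(\ln(z-1))_-=2\pi i$ combines with the jump matrix of $\sigma_3\Phi_B\sigma_3$ across the positive real axis of the Bessel variable to produce the $(2,2)$-entry $(1-\gamma)e^{t(g_+-g_-)}$, while the off-diagonal entries come from $e^{-\frac{\pi i}{2}(\alpha-\beta)\sigma_3}$ conjugating the anti-diagonal block of $\sigma_3\Phi_B\sigma_3$. On $\Sigma_1$ and $\Sigma_5$ the jumps of $\sigma_3\Phi_B\sigma_3$ are lower-triangular with $-1$ on the sub-diagonal; conjugation by $K^{(1)}(z)$ together with the exponential $e^{-2tg(z)}$ restores the desired entries $-e^{\pm\pi i(\alpha-\beta)}e^{2tg(z)}$.

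Finally, the matching condition is handled by the large-argument asymptotics of the Bessel parametrix \eqref{besselinfinity}. On $\partial U(1,\delta)$, inserting this expansion into \eqref{p1_} yields
\begin{equation*}
P^{(1)}(z)\bigl(P^{(\infty)}(z)\bigr)^{-1}
= P^{(\infty)}(z) e^{\frac{\pi i}{2}(\alpha-\beta)\sigma_3} D(z)^{-\mu\sigma_3}\Bigl(I + O\bigl(\tfrac{1}{t}\bigr)\Bigr) D(z)^{\mu\sigma_3} e^{-\frac{\pi i}{2}(\alpha-\beta)\sigma_3} \bigl(P^{(\infty)}(z)\bigr)^{-1},
\end{equation*}
after the $\ln(z-1)$ term in the triangular factor is killed by the super-exponentially small factor $(1-\gamma)e^{2|t|\sqrt{f^{(1)}(z)}}$ along the branch cut (since $1-\gamma \to 0$ exponentially by \eqref{nu}). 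Because $P^{(\infty)}(z)D(z)^{-\mu\sigma_3}$ is bounded on $\partial U(1,\delta)$ and independent of $t$, this gives $I + O(t^{-1})$ uniformly. The main obstacle I anticipate is keeping track of the $\sigma_3$ conjugation together with the sign change in $f^{(1)}$ (relative to $f^{(-1)}$): one must carefully check that the composition of $\sigma_3$, the $\pi$-rotation of $f^{(1)}$, and the swapped sign in $e^{\pm \frac{\pi i}{2}(\alpha-\beta)\sigma_3}$ reproduces the correct upper-triangular jump on $\Sigma_1,\Sigma_5$ (rather than lower-triangular ones as near $-1$). Once this bookkeeping is done, the three verifications are essentially symmetric to those of Lemma \ref{lemmap-1}.
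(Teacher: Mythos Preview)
Your proposal is correct and follows essentially the same three-step verification (analyticity of $E^{(1)}$, jump conditions, matching condition) that the paper uses, which in turn simply refers back to the proof of Lemma~\ref{lemmap-1}. One small caveat: your description of the jump structure on $\Sigma_1,\Sigma_5$ as ``upper-triangular (rather than lower-triangular ones as near $-1$)'' is not quite right---$J_A$ is lower-triangular on $\Sigma_1$ and upper-triangular on $\Sigma_5$, just as it is on $\Sigma_2$ and $\Sigma_3$ respectively---and on $(1,1+\delta)$ you must also account for the change in the piecewise constant matrix in \eqref{e1_}, not just $(f^{(1)})^{1/4\sigma_3}$; but these are bookkeeping details that do not affect the validity of your plan.
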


\begin{proof}
 Let us verify the analyticity of $E^{(1)}(z)$ in $U(1,\delta)$. According to the definition in \eqref{e1_}, the possible jump contour is $(1-\delta, 1+\delta)$. For $z \in (1, 1+\delta)$, \eqref{conformalmappingat1_} and \eqref{e1_} yield
  \begin{equation}
     \left(E_-^{(1)}(z)\right)^{-1}E_+^{(1)}(z)
   = (f^{(1)}(z))_-^{-\frac{1}{4}\sigma_3}
   \begin{pmatrix} i & 0 \\ 0 & -i  \end{pmatrix}(f^{(1)}(z))_+^{\frac{1}{4}\sigma_3} = I.
 \end{equation}
For $z \in (1-\delta, 1)$, the verification is similar. Moreover, it is easily seen that $E^{(1)}(1)=O(1)$ from \eqref{datztoinfinity}, \eqref{pinfinity_} and \eqref{e1_} . Therefore, $z=1$ is a removable singularity and $E^{(1)}(z)$ is indeed analytic. The rest of the proof is similar to that of Lemma \ref{lemmap-1}, we omit the details.

  This completes our proof.
\end{proof}

\subsection{Local parametrix at 0} \label{sec:local-parametrix}
It remains to construct a local parametrix near the origin. This local parametrix construction is vital in our analysis. We intend to find a function $P^{(0)}(z)$ satisfying the following RH problem.

\begin{rhp}\label{rhpforp0_}
\quad
 \begin{itemize}
  \item [(a)] $P^{(0)}(z)$ is defined and analytic in $\overline{U(0,\delta)} \setminus \{\Sigma_4 \cup \Sigma_6 \cup \Sigma_7 \}$.

  \item [(b)] $P^{(0)}(z)$ satisfies the jump condition
        \begin{equation}\label{jumpforp0_}
         P^{(0)}_+(z)=P^{(0)}_-(z)J_{A}(z),
         \qquad z \in U(0,\delta) \cap \{\Sigma_4 \cup \Sigma_6 \cup \Sigma_7 \},
        \end{equation}
        where $J_{A}(z)$ is given in \eqref{jumpfora_}.

  \item [(c)] As $ it \to +\infty$, we have the matching condition
        \begin{equation}\label{matchforp0_}
         P^{(0)}(z)= \left(I + O\left(t^{-\frac{1}{2}+|\mu|}\right)\right)P^{(\infty)}(z), \qquad z \in \partial U(0,\delta).
        \end{equation}
 \end{itemize}
\end{rhp}

This local parametrix is constructed in terms of orthogonal polynomials; also see \cite{Both2016}. We first define the $2 \times 2$ orthogonal polynomial parametrix as
\begin{equation}\label{hermiteparametrix}
  H(\zeta)=
  \begin{pmatrix}
    \pi_k(\zeta) & \frac{1}{2 \pi i}\int_{\mathbb{R}}\frac{\pi_k(z)w(z)}{z-\zeta}dz \\
    \gamma_{k-1}\pi_{k-1}(\zeta) & \frac{\gamma_{k-1}}{2 \pi i}\int_{\mathbb{R}}\frac{\pi_{k-1}(z)w(z)}{z-\zeta}dz
  \end{pmatrix}e^{-\frac{1}{2}\zeta^2 \sigma_3}, \qquad \zeta \in \mathbb{C}\setminus \mathbb{R}, \quad k \in \mathbb{Z}_{\geq 0},
\end{equation}
where $\pi_{-1}(\zeta) \equiv 0$, and $\pi_k(\zeta)$ are the monic orthogonal polynomials given in \eqref{eq:hn-def}.

With the properties of orthogonal polynomials, it is easily seen that $h_n^{-\frac{1}{2}}$ is the leading coefficient of the $n$-th orthonormal polynomial. It is well-known that the above function satisfies a RH problem as follows.
\begin{rhp}\label{rhpforh}
  The $2 \times 2$ matrix-valued function defined in \eqref{hermiteparametrix} satisfies the following properties:
  \begin{itemize}
    \item [(a)] $H(\zeta)$ is analytic for $\zeta \in \mathbb{C}\setminus \mathbb{R}$.

    \item [(b)] $H(\zeta)$ satisfies the jump condition
    \begin{equation}\label{jumpforh}
      H_+(\zeta) = H_-(\zeta)\begin{pmatrix}  1 & w(\zeta) \, e^{\zeta^2}\\ 0 & 1 \end{pmatrix}, \quad \zeta \in \mathbb{R},
    \end{equation}
    where $w(\zeta)$ is given in \eqref{hermiteweight}.

    \item [(c)] As $\zeta \to \infty$, we have
    \begin{equation}\label{hermitinfinity}
      H(\zeta) = \left(
      I + \frac{H_1}{\zeta} + \frac{H_2}{\zeta^2} + O\left(\frac{1}{\zeta^3}\right)
      \right)\zeta^{k \sigma_3}e^{-\frac{1}{2}\zeta^2 \sigma_3}.
    \end{equation}
    Here, $H_1, H_2$ are the coefficient matrices. According to the properties of orthogonal polynomials, $H_1$ admits the form as
    \begin{equation}\label{h1}
      H_1 = \begin{pmatrix}  a_k & \gamma_k^{-1} \\ \gamma_{k-1} & d_k \end{pmatrix},
    \end{equation}
    where $\gamma_k$ are given by \eqref{defgammak}, while $a_k,d_k$ are insignificant constants.
  \end{itemize}
\end{rhp}

Then, we introduce a local conformal mapping in $U(0,\delta)$:
\begin{equation}\label{conformalmappingat0_}
  f^{(0)}(z) =
   \begin{cases}
    \left(\frac{i}{2}\sqrt{z^2-1}+\frac{1}{2}\right)^{\frac{1}{2}}, & \qquad \im{z} > 0, \\
    \left(-\frac{i}{2}\sqrt{z^2-1}+\frac{1}{2}\right)^{\frac{1}{2}}, & \qquad \im{z} < 0.
   \end{cases}
\end{equation}
It is easily seen that
\begin{equation}
  f^{(0)}(z) = \frac{z}{2}(1+O(z)), \qquad z \to 0.
\end{equation}
Apparently,  $f^{(0)}(z)$ maps the neighborhood near the origin on $z$-plane to the neighborhood near the origin on $f^{(0)}$-plane, preserving the argument.

With the orthogonal polynomial parametrix and the conformal mapping, we are able to construct the solution to the RH problem \eqref{rhpforp0_}.
\begin{lemma}
  Let $H(\zeta)$ be the parametrix given in \eqref{hermiteparametrix}. Then, the solution to the RH problem \ref{rhpforp0_} for $P^{(0)}(z)$ is given as
  \begin{equation}\label{p0_}
    P^{(0)}(z)= E^{(0)}(z)|t|^{\frac{\mu}{2}\sigma_3}H(|t|^{\frac{1}{2}}f^{(0)}(z))(1-\gamma)^{-\frac{1}{2}\sigma_3}(f^{(0)}(z))^{\alpha \sigma_3}|t|^{\frac{\alpha}{2}\sigma_3}K^{(0)}(z),
  \end{equation}
   where the branch of $(\cdot)^{\alpha}$ is taken along $(0,+\infty)$, and $K^{(0)}(z)$ is given as
  \begin{equation}
   K^{(0)}(z)=
   \begin{cases}
    e^{-\frac{\pi i}{2}(\alpha+\beta)\sigma_3}e^{-tg(z)\sigma_3}, \ & \arg z \in (0,\pi),      \\
    e^{-\frac{\pi i}{2}(3\alpha-\beta)\sigma_3}\begin{pmatrix} 0 & 1 \\ -1 & 0 \end{pmatrix}e^{-tg(z)\sigma_3},    \ & \arg z \in (-\pi, -\frac{\pi}{2}),      \\
    e^{-\frac{3\pi i}{2}(\alpha+\beta)\sigma_3}\begin{pmatrix} 0 & 1 \\ -1 & 0 \end{pmatrix}e^{-tg(z)\sigma_3},    \ & \arg z \in (-\frac{\pi}{2},0).
   \end{cases}
  \end{equation}
  Moreover, the analytic prefactor $E^{(0)}(z)$ is defined as
  \begin{equation}\label{e0_}
  \begin{aligned}
    E^{(0)}(z)&=P^{(\infty)}(z)\\
    &\quad \times
    \begin{cases}
      (f^{(0)}(z))^{-(k+\alpha)\sigma_3} D(z)^{-\chi \sigma_3} e^{\frac{\pi i}{2}(\alpha+\beta)\sigma_3},    \   & \arg z \in (0,\pi),  \\
      \begin{pmatrix} 0 & -1 \\ 1 & 0 \end{pmatrix}
      (f^{(0)}(z))^{-(k+\alpha)\sigma_3} D(z)^{\chi \sigma_3} e^{\frac{\pi i}{2}(3\alpha-\beta)\sigma_3},    \   & \arg z \in  (-\pi, -\frac{\pi}{2}),  \\
      \begin{pmatrix} 0 & -1 \\ 1 & 0 \end{pmatrix}
      (f^{(0)}(z))^{-(k+\alpha)\sigma_3} D(z)^{\chi \sigma_3} e^{\frac{3\pi i}{2}(\alpha+\beta)\sigma_3},     \   & \arg z \in (-\frac{\pi}{2},0), \\
    \end{cases}
    \end{aligned}
  \end{equation}
  where $(\cdot)^{\chi}$ takes the principal branch.
\end{lemma}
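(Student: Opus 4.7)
The verification follows the pattern of the proofs of Lemmas for $P^{(\pm 1)}(z)$, but the endpoint $0$ is more delicate because of the Fisher--Hartwig singularity and the need to track the $|t|^{\mu/2\sigma_3}$ conjugation. The plan is to check (i) analyticity of the prefactor $E^{(0)}(z)$ in $U(0,\delta)$, (ii) the jump conditions \eqref{jumpforp0_} on $\Sigma_4,\Sigma_6,\Sigma_7$, and (iii) the matching condition \eqref{matchforp0_} with the sharp error $O(t^{-1/2+|\mu|})$.

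For (i), the definition of $E^{(0)}(z)$ in \eqref{e0_} splits into three sectors separated by the negative imaginary axis. Across $\Sigma_4$, the jumps in $P^{(\infty)}$ (namely $e^{2\pi i\beta\sigma_3}$) together with the jump of $D(z)^{\chi\sigma_3}$ coming from the branch placement of $D$ and the anti-diagonal permutation $\bigl(\begin{smallmatrix}0&-1\\1&0\end{smallmatrix}\bigr)$ should cancel precisely, once the different exponential prefactors $e^{\frac{\pi i}{2}(\alpha+\beta)\sigma_3}$ and $e^{\frac{3\pi i}{2}(\alpha+\beta)\sigma_3}$ are taken into account. Across the real segments in $U(0,\delta)$ the identity $D_+D_-=1$ from \eqref{D+-} yields the required cancellation. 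Finally, at $z=0$ the factor $(f^{(0)}(z))^{-(k+\alpha)\sigma_3}\sim (z/2)^{-(k+\alpha)\sigma_3}$ should combine with the local behaviour of $P^{(\infty)}(z)$ near $z=0$ (dictated by \eqref{localbehaviorforpsinear0} through the normalization \eqref{normalization}) to produce a removable singularity.

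For (ii), I would translate the jumps of the Hermite-type parametrix \eqref{jumpforh} through the change of variable $\zeta=|t|^{1/2}f^{(0)}(z)$. Since $f^{(0)}$ is a conformal bijection near $0$ preserving orientation of the real axis, $\Sigma_6=(0,1)$ and $\Sigma_7=(-1,0)$ map to parts of the real line in $\zeta$, where $w(\zeta)\,e^{\zeta^2}$ reduces to $(\pm\zeta)^{2\alpha}$ with the appropriate $e^{-2\pi i\beta}$ factor on the positive side. Together with $(1-\gamma)^{-\sigma_3/2}$ and the sector-dependent $K^{(0)}(z)$, this should reproduce exactly the $\Sigma_6,\Sigma_7$ jumps in \eqref{jumpfora_}, the $(1-\gamma)e^{t(g_+-g_-)}$ entry being generated by collecting the exponentials from $g_\pm$ and from the conformal map. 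The $\Sigma_4$ jump reduces to the sector change in $K^{(0)}(z)$ and $E^{(0)}(z)$, using $H(\zeta)$'s analyticity off the real line.

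For (iii), substitute the large-$\zeta$ expansion \eqref{hermitinfinity} with $\zeta=|t|^{1/2}f^{(0)}(z)$. The leading exponential $e^{-\zeta^2/2\sigma_3}$ combines with the relation $f^{(0)}(z)^2=\tfrac12+\tfrac{i}{2}\sqrt{z^2-1}$, with the $e^{-tg\sigma_3}$ factor hidden in $K^{(0)}$, and with $(1-\gamma)^{-\sigma_3/2}=|t|^{-(\chi+\alpha)\sigma_3/2}e^{|t|\sigma_3/4}$ (using \eqref{nu}) so that \emph{all} $|t|$-growing exponentials cancel; the remaining algebraic factors match the sector-dependent definition of $E^{(0)}(z)$ exactly, leaving $P^{(\infty)}(z)$ at leading order. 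The subleading correction $H_1/\zeta$ with $H_1$ as in \eqref{h1} picks up the conjugation $|t|^{\mu/2\sigma_3}H_1|t|^{-\mu/2\sigma_3}$, whose off-diagonal entries are of size $|t|^{\pm\mu}/|t|^{1/2}$. Taking the worst case gives the stated error $O(t^{-1/2+|\mu|})$. The main obstacle is this step: one must verify that the $|t|^{\mu/2\sigma_3}$ sandwich does not spoil the cancellation of the large factors and that no hidden $|t|^{|\mu|}$ arises in the diagonal $H_1$ entries or in the prefactor from $E^{(0)}$; this is precisely why the $D(z)^{\mu\sigma_3}$ factor was inserted into $P^{(\infty)}(z)$ in \eqref{pinfinity_}, absorbing the $\mu$-power of $D$ near $z=0$ so that the matching on $\partial U(0,\delta)$ goes through with the sharp exponent $-\tfrac12+|\mu|$.
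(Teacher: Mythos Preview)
Your outline is correct and follows exactly the paper's three-step verification: analyticity of $E^{(0)}$ in $U(0,\delta)$, jump conditions on $\Sigma_4\cup\Sigma_6\cup\Sigma_7$, and matching on $\partial U(0,\delta)$ via the large-$\zeta$ expansion \eqref{hermitinfinity}. The only minor slips are descriptive: the sector boundaries for $E^{(0)}$ are the two real half-axes \emph{and} the negative imaginary axis (not just $\Sigma_4$), and the removable singularity at $z=0$ is checked in the paper directly from the explicit structure of $P^{(\infty)}$ in \eqref{pinfinity_} together with \eqref{datzto0}, not via \eqref{localbehaviorforpsinear0}; otherwise your mechanism for the $O(t^{-1/2+|\mu|})$ error and your explanation of the role of the $D(z)^{\mu\sigma_3}$ insertion match the paper's argument exactly.
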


\begin{proof}
  Again, we follow the same process to organize the proof. First we verify the analyticity of $E^{(0)}(z)$. From the definition in \eqref{e0_}, it is obvious that $E^{(0)}(z)$ is analytic in $U(0,\delta) \setminus \{\Sigma_4 \cup \Sigma_6 \cup \Sigma_7 \}$. For $z \in (0, \delta)$, recalling \eqref{jumpforpinfinity_} and \eqref{conformalmappingat0_}, we have
  \begin{equation}
    \left(E_-^{(0)}(z)\right)^{-1}E_+^{(0)}(z)
    = e^{-2\pi i \alpha\sigma_3} (f^{(0)}_-(z))^{(k+\alpha)\sigma_3} (f^{(0)}_+(z))^{-(k+\alpha)\sigma_3} = I.
  \end{equation}
  For the other potential jump contours, the verification is similar. When $z=0$, by \eqref{datzto0}, \eqref{pinfinity_} and \eqref{e0_}, it is easily seen that $E^{(0)}(0)=O(1)$. Therefore, $z=0$ is a removable singularity, and $E^{(0)}(z)$ is indeed analytic in $U(0,\delta)$.

  It is straightforward to verify that $P^{(0)}(z)$ satisfies the jump conditions with the analyticity of the prefactor $E^{(0)}(z)$ in $U(0,\delta)$. For $z \in [0,\delta)$, it follows from \eqref{jumpforh} and \eqref{conformalmappingat0_} that
  \begin{equation}
   \begin{aligned}
    & \left(P^{(0)}_-(z)\right)^{-1}P^{(0)}_+(z)  \\
    & = e^{tg_-(z)\sigma_3}\begin{pmatrix} 0 & -1 \\ 1 & 0 \end{pmatrix}e^{\frac{\pi i}{2}(\alpha+3\beta)\sigma_3}(1-\gamma)^{\frac{1}{2}\sigma_3}
    \begin{pmatrix}  e^{-2\pi i \alpha} & e^{-2\pi i \beta} \\ 0 & e^{2\pi i \alpha} \end{pmatrix}
    (1-\gamma)^{-\frac{1}{2}\sigma_3}\\
    & \quad \times e^{\frac{\pi i}{2}(\alpha-\beta)\sigma_3}e^{-tg_+(z)\sigma_3}    \\
    & = \begin{pmatrix} 0 & -e^{\pi i(\alpha-\beta)} \\ e^{-\pi i(\alpha-\beta)} & (1-\gamma)e^{t(g_+(z)-g_-(z))} \end{pmatrix}.
  \end{aligned}
 \end{equation}
 For $z$ on the other contours, the jump condition can be verified by the same process.

 It remains to verify the matching condition \eqref{matchforp0_}. With the analyticity of $E^{(0)}(z)$ and the fact that $f^{(0)}(z)$ is $t$-independent, a combination of \eqref{pinfinity_}, \eqref{hermitinfinity}, \eqref{p0_} and \eqref{e0_} gives
 \begin{small}
 \begin{equation}\label{p0pinfinity}
 \begin{aligned}
      & P^{(0)}(z)\left( P^{(\infty)}(z) \right)^{-1}\\
   & =  E^{(0)}(z)t^{\frac{|\mu|}{2}\sigma_3}\left(I + \frac{1}{|t|^{\frac{1}{2}}f^{(0)}(z)}
    \begin{pmatrix}  * & \gamma_k^{-1} \\ \gamma_{k-1} & * \end{pmatrix} + O\left(\frac{1}{t}\right)\right)t^{-\frac{|\mu|}{2}\sigma_3}
    (E^{(0)}(z))^{-1}.
    \end{aligned}
 \end{equation}
 \end{small}
According to \eqref{e0_}, it is easily seen that $E^{(0)}(z)$ is $t$-independent. Hence, the matching condition is verified. This completes our proof.
\end{proof}


\subsection{Ratio RH problem}

With all of the local parametrices constructed, the ratio transformation is defined as
\begin{equation}\label{r!}
 R(z)=A(z)
 \begin{cases}
   \left(P^{(0)}(z)\right)^{-1}, \qquad      & z \in U(0,\delta), \\
   \left(P^{(1)}(z)\right)^{-1}, \qquad      & z \in U(1,\delta),  \\
   \left(P^{(-1)}(z)\right)^{-1}, \qquad     & z \in U(-1,\delta), \\
   \left(P^{(\infty)}(z)\right)^{-1}, \qquad & \mathrm{otherwise}. \\
 \end{cases}
\end{equation}
It is straightforward to see $R(z)$ satisfies the following RH problem.

\begin{rhp}
  \quad
\begin{itemize}
  \item [(a)] $R(z)$ is analytic in $\mathbb{C} \setminus \Sigma_{R}$, where $\Sigma_R$ is illustrated in Figure \ref{figure2}.

 \begin{figure}[t]
 \center
\tikzset{every picture/.style={line width=0.75pt}} 

\begin{tikzpicture}[x=0.75pt,y=0.75pt,yscale=-0.7,xscale=0.7]

\draw   (100,147) .. controls (100,127.12) and (116.12,111) .. (136,111) .. controls (155.88,111) and (172,127.12) .. (172,147) .. controls (172,166.88) and (155.88,183) .. (136,183) .. controls (116.12,183) and (100,166.88) .. (100,147) -- cycle ;
\draw   (240,148) .. controls (240,128.12) and (256.12,112) .. (276,112) .. controls (295.88,112) and (312,128.12) .. (312,148) .. controls (312,167.88) and (295.88,184) .. (276,184) .. controls (256.12,184) and (240,167.88) .. (240,148) -- cycle ;
\draw   (381,147) .. controls (381,127.12) and (397.12,111) .. (417,111) .. controls (436.88,111) and (453,127.12) .. (453,147) .. controls (453,166.88) and (436.88,183) .. (417,183) .. controls (397.12,183) and (381,166.88) .. (381,147) -- cycle ;
\draw    (132,111) -- (137,111) ;
\draw [shift={(139.5,111)}, rotate = 180] [fill={rgb, 255:red, 0; green, 0; blue, 0 }  ][line width=0.08]  [draw opacity=0] (8.93,-4.29) -- (0,0) -- (8.93,4.29) -- cycle    ;
\draw    (274,112) -- (279,112) ;
\draw [shift={(281.5,112)}, rotate = 180] [fill={rgb, 255:red, 0; green, 0; blue, 0 }  ][line width=0.08]  [draw opacity=0] (8.93,-4.29) -- (0,0) -- (8.93,4.29) -- cycle    ;
\draw    (412,111) -- (417,111) ;
\draw [shift={(419.5,111)}, rotate = 180] [fill={rgb, 255:red, 0; green, 0; blue, 0 }  ][line width=0.08]  [draw opacity=0] (8.93,-4.29) -- (0,0) -- (8.93,4.29) -- cycle    ;
\draw    (34,50) -- (112,121) ;
\draw [shift={(76.7,88.87)}, rotate = 222.31] [fill={rgb, 255:red, 0; green, 0; blue, 0 }  ][line width=0.08]  [draw opacity=0] (8.93,-4.29) -- (0,0) -- (8.93,4.29) -- cycle    ;
\draw    (440,175) -- (518,246) ;
\draw [shift={(482.7,213.87)}, rotate = 222.31] [fill={rgb, 255:red, 0; green, 0; blue, 0 }  ][line width=0.08]  [draw opacity=0] (8.93,-4.29) -- (0,0) -- (8.93,4.29) -- cycle    ;
\draw    (35,240) -- (114,175) ;
\draw [shift={(78.36,204.32)}, rotate = 140.55] [fill={rgb, 255:red, 0; green, 0; blue, 0 }  ][line width=0.08]  [draw opacity=0] (8.93,-4.29) -- (0,0) -- (8.93,4.29) -- cycle    ;
\draw    (441,120) -- (520,55) ;
\draw [shift={(484.36,84.32)}, rotate = 140.55] [fill={rgb, 255:red, 0; green, 0; blue, 0 }  ][line width=0.08]  [draw opacity=0] (8.93,-4.29) -- (0,0) -- (8.93,4.29) -- cycle    ;
\draw  [fill={rgb, 255:red, 0; green, 0; blue, 0 }  ,fill opacity=1 ] (134.51,147.74) .. controls (134.51,147.33) and (134.85,147) .. (135.26,147) .. controls (135.67,147) and (136,147.33) .. (136,147.74) .. controls (136,148.15) and (135.67,148.49) .. (135.26,148.49) .. controls (134.85,148.49) and (134.51,148.15) .. (134.51,147.74) -- cycle ;
\draw  [fill={rgb, 255:red, 0; green, 0; blue, 0 }  ,fill opacity=1 ] (276,148) .. controls (276,147.59) and (276.33,147.26) .. (276.74,147.26) .. controls (277.15,147.26) and (277.49,147.59) .. (277.49,148) .. controls (277.49,148.41) and (277.15,148.74) .. (276.74,148.74) .. controls (276.33,148.74) and (276,148.41) .. (276,148) -- cycle ;
\draw  [fill={rgb, 255:red, 0; green, 0; blue, 0 }  ,fill opacity=1 ] (416.26,147.74) .. controls (416.26,147.33) and (416.59,147) .. (417,147) .. controls (417.41,147) and (417.74,147.33) .. (417.74,147.74) .. controls (417.74,148.15) and (417.41,148.49) .. (417,148.49) .. controls (416.59,148.49) and (416.26,148.15) .. (416.26,147.74) -- cycle ;

\draw (273,150) node [anchor=north west][inner sep=0.75pt]   [align=left] {0};
\draw (414,150) node [anchor=north west][inner sep=0.75pt]   [align=left] {1};
\draw (127,150) node [anchor=north west][inner sep=0.75pt]   [align=left] {$-1$};
\draw (458,65) node [anchor=north west][inner sep=0.75pt]   [align=left] {$\Sigma_1$};
\draw (77,65) node [anchor=north west][inner sep=0.75pt]   [align=left] {$\Sigma_2$};
\draw (77,212) node [anchor=north west][inner sep=0.75pt]   [align=left] {$\Sigma_3$};
\draw (461,212) node [anchor=north west][inner sep=0.75pt]   [align=left] {$\Sigma_5$};

\end{tikzpicture}
\caption{Contours for $R(z)$.}
    \label{figure2}
  \end{figure}
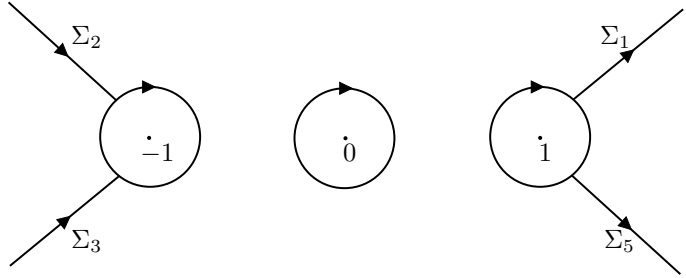

  \item [(b)] $R(z)$ satisfies the jump condition
        \begin{equation}\label{jumpforr!}
          R_+(z)=R_-(z)J_{R}(z), \qquad z \in \Sigma_R,
        \end{equation}
        where
        \begin{equation}\label{jr!}
          J_{R}(z)=
          \begin{cases}
            P^{(1)}(z)\left(P^{(\infty)}(z)\right)^{-1}, \qquad & z \in \partial U(1,\delta),    \\
            P^{(-1)}(z)\left(P^{(\infty)}(z)\right)^{-1}, \qquad & z \in \partial U(-1,\delta),    \\
            P^{(0)}(z)\left(P^{(\infty)}(z)\right)^{-1}, \qquad & z \in \partial U(0,\delta),    \\
            P^{(\infty)}(z)J_A(z)\left(P^{(\infty)}(z)\right)^{-1}, \qquad & \mathrm{otherwise}.
          \end{cases}
        \end{equation}
        Here the jump contours $\partial U(j, \delta)$, $j = \pm1,0$ are clockwise-oriented.
\item [(c)] As $z \to \infty$, we have
      \begin{equation}
        R(z) = I + O(z^{-1}).
      \end{equation}
\end{itemize}
\end{rhp}

It is easily seen from \eqref{jumpfora_} and \eqref{pinfinity_} that $P^{(\infty)}(z)J_A(z)\left(P^{(\infty)}(z)\right)^{-1}$ tends to the identity matrix exponentially as $it \to +\infty$, uniformly for $z \in \Sigma_R \setminus \{\partial U(-1,\delta) \cup \partial U(0,\delta) \cup \partial U(1,\delta) \}$. Hence, it follows from  \eqref{matchforp-1_}, \eqref{matchforp1_} and \eqref{matchforp0_} that as $it \to +\infty$
\begin{equation}
  J_R(z) = I + \left(\frac{1}{t}\right), \qquad z \in \Sigma_R \setminus \partial U(0,\delta).
\end{equation}
When $z \in \partial U(0,\delta)$ a detailed form of $J_{R}(z)$ as $it \to +\infty$ is obtained via \eqref{p0pinfinity}:
\begin{equation}\label{jrasztoinfinity_}
    J_{R}(z) = I + \frac{|t|^{-\frac{1}{2}}}{f^{(0)}(z)}E^{(0)}(z)
    \begin{pmatrix}
      a_k & \gamma_k^{-1} |t|^{\mu} \\ \gamma_{k-1}|t|^{-\mu} & d_k
    \end{pmatrix}
    \left(E^{(0)}(z)\right)^{-1} + O(t^{-1+|\mu|}).
\end{equation}
Clearly, for $\mu \in [-\frac{1}{2},\frac{1}{2})$, the off-diagonal entries of $J_{R}(z)-I$ do not tend to 0 uniformly.

In order to resolve with this issue, we define the following $t$-independent matrices.
\begin{align}
  R_1^+ & = \frac{\gamma_k^{-1}}{f^{(0)}(z)}E^{(0)}(z)
  \begin{pmatrix} 0 & 1 \\ 0 & 0 \end{pmatrix}\left(E^{(0)}(z)\right)^{-1},\label{r1+}\\
  R_1^- & = \frac{\gamma_{k-1}}{f^{(0)}(z)}E^{(0)}(z)
  \begin{pmatrix} 0 & 0 \\ 1 & 0 \end{pmatrix}\left(E^{(0)}(z)\right)^{-1},\label{r1-}\\
  R_2 & = \frac{1}{f^{(0)}(z)}E^{(0)}(z)\begin{pmatrix} a_k & 0 \\ 0 & d_k \end{pmatrix}\left(E^{(0)}(z)\right)^{-1}.\label{r2}
\end{align}
Substituting the above matrices into \eqref{jrasztoinfinity_}, we have as $it \to +\infty$
\begin{equation}
  J_{R}(z)-I=R_1^+|t|^{-\frac{1}{2}+\mu} + R_1^-|t|^{-\frac{1}{2}-\mu} + R_2|t|^{-\frac{1}{2}} + O(t^{-1+|\mu|}).
\end{equation}
It is now clear to see that the first two terms of $J_{R}(z)-I$ do not tend to 0 uniformly as $it \to +\infty$. To eliminate these factors, we define
\begin{align}
  E^+(z) & = I+R_1^+|t|^{-\frac{1}{2}+\mu}, \qquad \mu \in [0,\frac{1}{2}),\label{e+}    \\
  E^-(z) & = I+R_1^-|t|^{-\frac{1}{2}-\mu}, \qquad \mu \in [-\frac{1}{2},0).\label{e-}
\end{align}
Directly, it follows that
\begin{align}
  \left(E^+(z)\right)^{-1} & = I-R_1^+|t|^{-\frac{1}{2}+\mu},
  \qquad \mu \in [0,\frac{1}{2}),\label{e+inverse}   \\
  \left(E^-(z)\right)^{-1} & = I-R_1^-|t|^{-\frac{1}{2}-\mu}, \qquad \mu \in [-\frac{1}{2},0).\label{e-inverse}
\end{align}

\subsection{Singular RH problem}

In this step, we define
\begin{equation}\label{q}
  Q(z) = R(z) \begin{cases}
    E^+(z), \qquad & |z|<\delta, \  \mu \in [0,\frac{1}{2}), \\
    E^-(z), \qquad & |z|<\delta, \  \mu \in [-\frac{1}{2},0), \\
    I, & \mathrm{otherwise}.
  \end{cases}
\end{equation}
Then, $Q(z)$ leads us to a singular RH problem.

\begin{rhp}\label{rhpforq}
\quad
  \begin{itemize}
    \item [(a)] $Q(z)$ is analytic in $\mathbb{C} \setminus \left\{ \Sigma_{R} \cup \{0\}\right\}$, where $\Sigma_{R}$ denotes the jump contours of $R(z)$.
    \item [(b)] $Q(z)$ satisfies the jump conditions
    \begin{equation}
      Q_+(z) = Q_-(z) \begin{cases}
        \left(E^+(z)\right)^{-1}J_{R}(z), \qquad & |z| = \delta, \ \mu \in [0,\frac{1}{2}), \\
        \left(E^-(z)\right)^{-1}J_{R}(z), \qquad & |z| = \delta, \ \mu \in [-\frac{1}{2},0), \\
        J_{R}(z),  & \mathrm{otherwise}.
      \end{cases}
    \end{equation}

    \item [(c)] $Q(z)$ has a first order pole at $z = 0$. More precisely, when $z \to 0$, we have
    \begin{equation}\label{qat0}
      Q(z) = \hat{Q}(z)
      \begin{cases}
        \left(I + \frac{\sigma_+}{z}
        \begin{pmatrix} -1 & 1 \\ -1 & 1 \end{pmatrix} + O(z)\right)T^{-1}, &\quad \mu \in [0,\frac{1}{2}), \\
        \left(I + \frac{\sigma_-}{z}
        \begin{pmatrix} -1 & -1 \\ 1 & 1 \end{pmatrix} + O(z)\right)T^{-1}, &\quad \mu \in [-\frac{1}{2},0),
      \end{cases}
    \end{equation}
    where $\hat{Q}(z)$ is analytic at $z = 0$, $\sigma_\pm$ are constants independent of $z$, and $T$ is a constant matrix defined as
    \begin{equation}
        T = 2^{\beta \sigma_3}e^{\frac{\pi i}{2}(\alpha - \beta)\sigma_3}.
    \end{equation}

    \item [(d)] As $z \to \infty$, we have
    \begin{equation}
      Q(z) = I + O(z^{-1}).
    \end{equation}
  \end{itemize}
\end{rhp}

Directly, with the fact that $R(z)$ is actually analytic at $z = 0$, we have the explicit expression for $\sigma_{\pm}$ in the following proposition.

\begin{proposition}
  The constants $\sigma_\pm$ in \eqref{qat0} are given as
  \begin{align}
      \sigma_+ & = \frac{\gamma_k^{-1}e^{\pi i \beta}|t|^{-\frac{1}{2}+\mu}}{1 - i\gamma_k^{-1}e^{\pi i \beta}|t|^{-\frac{1}{2}+\mu}}, \qquad \mu \in [0,\frac{1}{2}),\label{sigma+} \\
      \sigma_- & = \frac{\gamma_{k-1}e^{-\pi i \beta}|t|^{-\frac{1}{2}-\mu}}{1+i\gamma_{k-1}e^{-\pi i \beta}|t|^{-\frac{1}{2}-\mu}}, \qquad \mu \in [-\frac{1}{2},0).\label{sigma-}
  \end{align}
\end{proposition}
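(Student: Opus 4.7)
The plan is to exploit the analyticity of $R(z)$ at $z=0$. In the region $|z|<\delta$, the relation $R(z)=Q(z)(E^\pm(z))^{-1}$ combined with the fact that $E^\pm(z)$ has a simple pole at the origin (inherited from $1/f^{(0)}(z)\sim 2/z$) pins down $\sigma_\pm$ by the requirement that the product be regular at $0$. I treat the case $\mu\in[0,\frac{1}{2})$ in detail; the other case is symmetric.

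First I would Taylor-expand $R_1^+(z)$ near the origin. Because $f^{(0)}(z)=z/2+O(z^2)$ and $E^{(0)}(z)$ is analytic at $0$, the formula \eqref{r1+} yields
\[
R_1^+(z)=\frac{2\gamma_k^{-1}}{z}\,r_+ + O(1),\qquad r_+ := E^{(0)}(0)\begin{pmatrix}0&1\\0&0\end{pmatrix}(E^{(0)}(0))^{-1}.
\]
Since $r_+$ is nilpotent of square zero, $(E^+(z))^{-1}=I-|t|^{-\frac{1}{2}+\mu}R_1^+(z)$ holds as an exact identity, matching \eqref{e+inverse}. Combined with $R(z)=R(0)+R'(0)z+O(z^2)$, the Laurent expansion of $Q(z)=R(z)E^+(z)$ near $0$ has principal part $\frac{2\gamma_k^{-1}|t|^{-\frac{1}{2}+\mu}R(0)\,r_+}{z}$, while \eqref{qat0} forces this principal part to equal $\sigma_+\hat Q(0)\,NT^{-1}/z$ with $N=\begin{pmatrix}-1&1\\-1&1\end{pmatrix}$.

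Next I would match these rank-one residue expressions together with the constant-term relation to solve for the unknowns $\sigma_+$ and $\hat Q(0)$. Vanishing of the $z^{-2}$ coefficient in $R(z)=Q(z)(E^+(z))^{-1}$ enforces the structural identity $r_+TN=0$, which amounts to a compatibility between the first column of $E^{(0)}(0)$ and the diagonal matrix $T=2^{\beta\sigma_3}e^{\pi i(\alpha-\beta)\sigma_3/2}$. Once this compatibility is in hand, eliminating $\hat Q(0)$ between the residue and constant-term matches collapses the $z^{-1}$ cancellation to the single scalar equation
\[
\sigma_+\bigl(1-i\gamma_k^{-1}e^{\pi i\beta}|t|^{-\frac{1}{2}+\mu}\bigr)=\gamma_k^{-1}e^{\pi i\beta}|t|^{-\frac{1}{2}+\mu},
\]
whose solution is \eqref{sigma+}. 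The derivation of $\sigma_-$ proceeds identically under the substitutions $\begin{pmatrix}0&1\\0&0\end{pmatrix}\leftrightarrow\begin{pmatrix}0&0\\1&0\end{pmatrix}$, $\gamma_k^{-1}\to\gamma_{k-1}$, $\beta\to-\beta$ and $\mu\to-\mu$, which flip the sign of $i$ in the denominator and recover \eqref{sigma-}.

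The main hurdle is the explicit evaluation of $E^{(0)}(0)$ through \eqref{e0_}. This requires tracking the principal branches of $D(z)$ and $f^{(0)}(z)$ as $z\to 0$ via \eqref{datzto0} and $\sqrt{z^2-1}\to i$ from the upper half-plane, and identifying the hidden cancellations among $D(z)^{\mu}$, $D(z)^{-\chi}$ and $(f^{(0)}(z))^{-(k+\alpha)}$ that render $E^{(0)}(0)$ finite and invertible. From this limit the scalar factor $\gamma_k^{-1}e^{\pi i\beta}$ and the coefficient $-i$ appearing in \eqref{sigma+} emerge as the specific ratios between $r_+$ and $NT^{-1}$ in the rank-one match; the compatibility $r_+TN=0$ is then a direct consequence of the explicit formula for $E^{(0)}(0)$ rather than a separate miracle.
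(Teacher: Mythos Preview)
Your approach is essentially the paper's: both exploit the analyticity of $R(z)$ at the origin together with the explicit pole structure of $E^\pm(z)$, and both reduce to Taylor-expanding $E^{(0)}(z)$ at $z=0$. The paper simply computes $Q(z)=R(z)E^+(z)$ forward, writes out the expansion of $T^{-1}E^{(0)}(z)\begin{pmatrix}0&1\\0&0\end{pmatrix}(E^{(0)}(z))^{-1}T$ to first order in $z$, and then factors into the form \eqref{qat0}; you propose to work backward from the ansatz \eqref{qat0} and impose cancellation of the principal part in $R=Q(E^+)^{-1}$. These are two sides of the same computation.

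Two points to flag. First, the $z^{-2}$ vanishing in $R=Q(E^+)^{-1}$ yields $NT^{-1}r_+=0$, not $r_+TN=0$; both happen to hold once you know $r_+=\tfrac{1}{2}e^{\pi i\beta}TNT^{-1}$, so this slip is harmless. Second, and more substantively, your last paragraph understates what is needed. Knowing $E^{(0)}(0)$ alone gives you $r_+=\tfrac{1}{2}e^{\pi i\beta}TNT^{-1}$ and hence the numerator $c:=\gamma_k^{-1}e^{\pi i\beta}|t|^{-1/2+\mu}$, but it does \emph{not} produce the $-i$ in the denominator of \eqref{sigma+}. That coefficient comes from the \emph{first-order} Taylor term of $E^{(0)}(z)$: writing the constant term of $|t|^{-1/2+\mu}T^{-1}R_1^+(z)T$ as $F_0$, the $z^{-1}$ cancellation condition becomes $(c-\sigma_+)N=\sigma_+F_0N$, and it is the paper's explicit two-term expansion of $E^{(0)}$ that yields $F_0N=-icN$, whence $\sigma_+=c/(1-ic)$. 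So the real hurdle is the two-term expansion of $E^{(0)}$ near $0$, not just its value there; your ``ratios between $r_+$ and $NT^{-1}$'' cannot by themselves see the sign of $i$ that distinguishes \eqref{sigma+} from \eqref{sigma-}.
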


\begin{proof}
  When $\mu \in [0,\frac{1}{2})$ and $z \to 0$, it follows from \eqref{r1+}, \eqref{e+} and \eqref{q} that
  \begin{equation}\label{qat0_}
    Q(z) = R(z)\left(I + \frac{\gamma_k^{-1}|t|^{-\frac{1}{2}+\mu}}{f^{(0)}(z)}E^{(0)}(z)
    \begin{pmatrix} 0 & 1 \\ 0 & 0 \end{pmatrix}\left(E^{(0)}(z)\right)^{-1} \right).
  \end{equation}
  With \eqref{pinfinity_} and \eqref{e0_}, we have the local behavior near $z = 0$ for the analytic factor $E^{(0)}(z)$ as
  \begin{equation}
    E^{(0)}(z) = T \left( \frac{1}{\sqrt{2}}\begin{pmatrix} 1 & -1 \\ 1 & 1 \end{pmatrix} + \frac{iz}{2 \sqrt{2}}\begin{pmatrix} -1 & -1 \\ 1 & -1 \end{pmatrix} + O(z^2)\right)
    \left( e^{\frac{\pi i}{2}\beta }(1 + i \beta z + O(z^2)) \right)^{\sigma_3}.
  \end{equation}
  Substituting into \eqref{qat0_}, we have
  \begin{small}
    \begin{equation}
      \begin{aligned}
        Q(z)
        & = R(z)T
         \left(I+\frac{2\gamma_k^{-1}e^{\pi i \beta}|t|^{-\frac{1}{2}+\mu}}{z}
        (1+2i\beta z +O(z^2))
        \left(\frac{1}{2}\begin{pmatrix} -1 & 1 \\ -1 & 1 \end{pmatrix} + \frac{iz}{2}
        \begin{pmatrix} 0 & -1 \\ -1 & 0 \end{pmatrix} \right. \right.\\
        &\quad \left. \left.+ O(z^2)\right)\right)T^{-1}   \\
        & = \hat{Q}(z)\left(I + \frac{\gamma_k^{-1}e^{\pi i \beta}|t|^{-\frac{1}{2}+\mu}}{1 - i\gamma_k^{-1}e^{\pi i \beta}|t|^{-\frac{1}{2}+\mu}} \frac{1}{z}
        \begin{pmatrix} -1 & 1 \\ -1 & 1 \end{pmatrix} + O(z)\right)T^{-1}.
      \end{aligned}
    \end{equation}
  \end{small}
Here, we make use of the fact that $R(z)$ is actually analytic at $z = 0$. When $\mu \in [-\frac{1}{2},0)$, a similar computation yields \eqref{qat0}. 
 
 This completes our proof.
\end{proof}



\subsection{Final transformation}
Now, all the jumps matrices of $Q(z)$ tend to identity matrices, but there exists an isolated singularity at $z = 0$. Therefore, we introduce the final transformation as
\begin{equation}\label{ql}
  Q(z) = \mathcal{L}(z) + \frac{B^\pm \mathcal{L}(z)}{z}, \qquad z \in \mathbb{C} \setminus \Sigma_{R}.
\end{equation}
Here $B^\pm$ is a constant matrix with respect to $z$, and the choice of $``+"$ or $``-"$ depends on the range of $\mu$. Then, we obtain the following RH problem for $\mathcal{L}(z)$,
\begin{rhp}
  $\mathcal{L}(z)$ defined in \eqref{ql} satisfies the following properties.
  \begin{itemize}
    \item [(a)] $\mathcal{L}(z)$ is defined and analytic for $z \in \mathbb{C} \setminus \Sigma_R$.
    \item [(b)] $\mathcal{L}(z)$ satisfies the same jump conditions for $Q(z)$.
    \item [(c)] $\mathcal{L}(z)$ is analytic at $z = 0$.
    \item [(d)] As $z \to \infty$, we have
    $
      \mathcal{L}(z) = I + O(z^{-1}).
    $
  \end{itemize}
\end{rhp}

The explicit expression of $B^{\pm}$ can be obtained from condition $(b)$:
\begin{small}
\begin{align}
  B^+ & = \sigma_+\mathcal{L}(0)T \begin{pmatrix}-1 & 1 \\ -1 & 1\end{pmatrix}T^{-1}
  \left(\mathcal{L}(0)-\sigma_+\mathcal{L}'(0)T\begin{pmatrix}-1 & 1 \\ -1 & 1\end{pmatrix}T^{-1}\right)^{-1},\label{b+}
  \qquad \mu \in [0,\frac{1}{2}), \\
  B^- & = \sigma_-\mathcal{L}(0)T \begin{pmatrix}-1 & -1 \\ 1 & 1\end{pmatrix}T^{-1}
  \left(\mathcal{L}(0)-\sigma_-\mathcal{L}'(0)T\begin{pmatrix}-1 & -1 \\ 1 & 1\end{pmatrix}T^{-1}\right)^{-1},\label{b-}
  \quad \mu \in [-\frac{1}{2},0).
\end{align}
\end{small}
Moreover, since $\mathcal{L}(z)$ and $Q(z)$ share the same jump conditions, applying \eqref{matchforp-1_}, \eqref{matchforp1_},  \eqref{matchforp0_}, \eqref{jr!}, \eqref{e+} and \eqref{e-}, we have
\begin{equation}
  J_\mathcal{L}(z) =
  \begin{cases}
    I + O(t^{-\frac{1}{2}}), & \qquad |z| = \delta, \\
    I + O(t^{-1}), & \qquad |z \pm 1| = \delta,
  \end{cases}
\end{equation}
as $it \to +\infty$.

Therefore, a standard small norm analysis for $\mathcal{L}(z)$ leads us to
\begin{equation}\label{l_}
  \mathcal{L}(z) = I + \frac{\mathcal{L}_1(z)}{|t|^{\frac{1}{2}}} + o(t^{-\frac{1}{2}}).
\end{equation}
For our work, the explicit expression of $\mathcal{L}_1(z)$ is not needed. As $it \to +\infty$, substituting \eqref{l_} into \eqref{b+} and \eqref{b-} gives
\begin{align}
    B^+ & = \sigma_+ T \begin{pmatrix}-1 & 1 \\ -1 & 1\end{pmatrix}T^{-1}
    \left(I+O(t^{-\frac{1}{2}})\right), \qquad \mu \in [0,\frac{1}{2}),\label{b+asstoinfinity}   \\
    B^- & = \sigma_- T \begin{pmatrix}-1 & -1 \\ 1 & 1\end{pmatrix}T^{-1}
    \left(I+O(t^{-\frac{1}{2}})\right), \qquad \mu \in [-\frac{1}{2},0).\label{b-asstoinfinity}
\end{align}

\subsection{Asymptotics of $\Psi$ as $it \to +\infty$}

Recalling \eqref{integralforfred}, we require $(\Psi_1(t))_{11}$ to derive the large $t$ asymptotics of the Fredholm determinant.

Now tracing back the transformations $\Psi \mapsto A \mapsto R \mapsto Q \mapsto \mathcal{L}$ in \eqref{normalization}, \eqref{r!}, \eqref{q} and \eqref{ql}, and letting $z \to \infty$, we have
\begin{equation}
\begin{aligned}
  \Psi(z)
   & = e^{-\frac{\pi i}{2}\chi \sigma_3} \left(\mathcal{L}(z) + \frac{B^\pm \mathcal{L}(z)}{z} \right)\cdot e^{\frac{\pi i}{2}(\mu-\beta)\sigma_3}2^{\beta \sigma_3}
    \left(I + \frac{1}{z} \begin{pmatrix} -i\alpha & -\frac{i}{2} \\ \frac{i}{2} & i\alpha \end{pmatrix} + O(z^{-2})\right) \\
   &\quad \times 2^{-\beta \sigma_3}e^{\frac{\pi i}{2}\beta \sigma_3}z^{-\beta \sigma_3}D(z)^{\mu \sigma_3}
    D(z)^{-\chi \sigma_3} e^{\frac{1}{4}tz\sigma_3}\left(I - \frac{t}{8z}\sigma_3 + O(z^{-2}) \right).
\end{aligned}
\end{equation}
As $it \to +\infty$, comparing the above equation with \eqref{infinitybehaviorforpsi} gives us
\begin{equation}\label{psi111}
    (\Psi_1(t))_{11}  = \begin{cases}
    - \frac{t}{8} -i\alpha -ik -\sigma_+ + O(t^{-\frac{1}{2}}), \quad & \mu \in [0,\frac{1}{2})\\
    - \frac{t}{8} -i\alpha -ik -\sigma_- + O(t^{-\frac{1}{2}}), \quad & \mu \in [-\frac{1}{2},0),
    \end{cases}
\end{equation}
where $\sigma_{\pm}$ are given in \eqref{sigma+} and \eqref{sigma-}, respectively.


\section{Proof of the main results}\label{proof}

In this section, we complete our proof of Theorem \ref{mainresult_}. First, we will introduce two lemmas and one theorem that will assist us in deriving the final asymptotics.

\subsection{Preliminary lemmas and theorem}

Note that the large-$t$ asymptotics of $\Psi_1(t)$ contains the terms related to the parameter $k$. Thus for $\chi \geq 0$, we introduce the following notations
\begin{equation}\label{sk}
  |t_k'|=2\left(\nu+(\alpha+k)\ln|t|\right), \qquad |t_k|=2\left(\nu+(\alpha+k-\frac{1}{2})\ln|t|\right),
\end{equation}
\begin{equation}\label{xy}
  x_k= - i \gamma_k^{-1}e^{\pi i \beta}, \qquad y_k = i \gamma_{k-1}e^{-\pi i \beta}.
\end{equation}
Here $t, t_k', t_k$ are all negative pure imaginary numbers.

Now we introduce two lemmas.

\begin{lemma}
  For $|t| \in [|t_k'|,|t_{k+1}|]$, i.e. $\mu \in [0,\frac{1}{2}]$, we have
  \begin{equation}\label{lemma+}
    \begin{aligned}
    \int_{t_k'}^t (\Psi_1(\tau))_{11} d\tau =
    & -\frac{t^2-t_k'^2}{16} - i\alpha(t-t_k')
    -2k \ln(|t|^{-k-\alpha}e^{\frac{1}{2}|t|-\nu}) \\
    & \qquad \qquad - 2 \ln(1+x_k |t|^{-\frac{1}{2}-k-\alpha}e^{\frac{1}{2}|t|-\nu})+O(t^{-\frac{1}{2}}\ln t).
    \end{aligned}
  \end{equation}
\end{lemma}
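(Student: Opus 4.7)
The plan is to substitute the large-$t$ expansion \eqref{psi111} for $\mu\in[0,\tfrac{1}{2})$ into the integrand and evaluate term by term. Since $\tau$ lies on the negative imaginary axis, I would parametrize by $u=|\tau|$, so $d\tau=-i\,du$ with $u$ running from $|t_k'|$ to $|t|$. Using the scaling \eqref{nu} with $\chi=k+\mu$, the factor $|\tau|^{-\frac{1}{2}+\mu}$ entering $\sigma_+$ can be rewritten as $|\tau|^{-\frac{1}{2}-k-\alpha}e^{|\tau|/2-\nu}$, so that $\sigma_+(\tau)=ix_k f(u)/(1+x_k f(u))$ with $f(u):=u^{-\frac{1}{2}-k-\alpha}e^{u/2-\nu}$, after inserting $x_k=-i\gamma_k^{-1}e^{\pi i\beta}$ from \eqref{xy}.

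The two polynomial pieces are immediate: $\int(-\tau/8)\,d\tau=-(t^2-t_k'^2)/16$ and $\int(-i\alpha)\,d\tau=-i\alpha(t-t_k')$. For the $-ik$ contribution I have $\int(-ik)\,d\tau=-k(|t|-|t_k'|)$, and I would convert this into a logarithm by subtracting the defining identities $|t|=2\nu+2(k+\mu+\alpha)\ln|t|$ and $|t_k'|=2\nu+2(k+\alpha)\ln|t_k'|$ from \eqref{sk}: this gives $|t|-|t_k'|=2\mu\ln|t|+2(k+\alpha)(\ln|t|-\ln|t_k'|)$. Since $|t|-|t_k'|=O(\ln|t|)$ on the range in question and $|t_k'|\sim|t|\sim 2\nu$, the second term is $O(\ln|t|/|t|)$. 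The identity $|t|^{\mu}=|t|^{-k-\alpha}e^{|t|/2-\nu}$ then turns the remaining $-2k\mu\ln|t|$ into exactly $-2k\ln(|t|^{-k-\alpha}e^{|t|/2-\nu})$.

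The most delicate step is integrating $\sigma_+$. A direct computation yields $f'(u)=\tfrac{1}{2}f(u)\bigl(1-(1+2k+2\alpha)/u\bigr)$, and hence
\[
\frac{x_k f(u)}{1+x_k f(u)}=\frac{2x_k f'(u)}{1+x_k f(u)}\bigl(1+O(1/u)\bigr)=2\frac{d}{du}\ln(1+x_k f(u))+O(1/u).
\]
Integrating the leading term telescopes to $2\ln(1+x_k f(u))\big|_{|t_k'|}^{|t|}$, and the $O(1/u)$ remainder integrates to $O(\ln|t|/|t|)$ over an interval of length $O(\ln|t|)$. At the lower endpoint $\mu=0$ so $f(|t_k'|)=|t_k'|^{-1/2}$, which is $O(t^{-1/2})$ and gets absorbed into the error; the upper endpoint delivers the advertised $-2\ln\bigl(1+x_k|t|^{-\frac{1}{2}-k-\alpha}e^{|t|/2-\nu}\bigr)$.

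Finally, the $O(\tau^{-1/2})$ remainder from \eqref{psi111} integrates to $O((|t|-|t_k'|)/|t_k'|^{1/2})=O(t^{-1/2}\ln|t|)$ because the interval of integration has length $O(\ln|t|)$. This dominates the $O(t^{-1}\ln|t|)$ contributions picked up in the previous two paragraphs and yields the stated error. The main subtlety to watch is that $\mu$ is a function of $|\tau|$ along the integration path: each occurrence of $|\tau|^{-\frac{1}{2}+\mu}$ must first be recast via \eqref{nu} as $f(|\tau|)$, since only then does $\sigma_+$ admit a clean logarithmic antiderivative; without this reparametrization the $-2\ln$ term cannot be produced.
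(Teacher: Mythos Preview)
Your proposal is correct and follows essentially the same route as the paper: substitute \eqref{psi111}, integrate the polynomial pieces directly, rewrite $-ik(t-t_k')$ as $-2k\ln(|t|^{-k-\alpha}e^{|t|/2-\nu})$ via the relation \eqref{nu}, and handle the $\sigma_+$ term by recognizing $x_k f/(1+x_k f)$ as $2\frac{d}{du}\ln(1+x_k f)$ up to an $O(1/u)$ correction that integrates to $O(t^{-1}\ln t)$. The only cosmetic difference is that the paper uses the literal definition $|t_k'|=2(\nu+(\alpha+k)\ln|t|)$ from \eqref{sk} (with $\ln|t|$, not $\ln|t_k'|$), which makes $t-t_k'=-2i\ln(|t|^{-k-\alpha}e^{|t|/2-\nu})$ an exact identity rather than one holding up to $O(t^{-1}\ln|t|)$; your implicit-definition version produces the same result after absorbing that error.
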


\begin{proof}
  According to \eqref{sigma+} and \eqref{psi111}, a direct integration leads us to
  \begin{small}
  \begin{multline}
      \int_{t_k'}^t (\Psi_1(\tau))_{11}d\tau = -\frac{t^2-t_k'^2}{16} - i\alpha(t-t_k') - ik(t-t_k') \\
      - i \int_{t_k'}^t
      \frac{x_k |\tau|^{-\frac{1}{2}-k-\alpha}e^{\frac{1}{2}|\tau|-\nu}}{1+x_k |\tau|^{-\frac{1}{2}-k-\alpha}e^{\frac{1}{2}|\tau|-\nu}} d\tau  + O(t^{-\frac{1}{2}}\ln t).
    \end{multline}
  \end{small}
  With \eqref{sk}, we have
  \begin{equation}
    t-t_k' = -2i\ln(|t|^{-k-\alpha}e^{\frac{1}{2}|t|-\nu}).
  \end{equation}
  For the integration term, a change of variable yields
  \begin{equation}
    \begin{aligned}
      &  \int_{t_k'}^t
      \frac{x_k |\tau|^{-\frac{1}{2}-k-\alpha}e^{\frac{1}{2}|\tau|-\nu}}{1+x_k |\tau|^{-\frac{1}{2}-k-\alpha}e^{\frac{1}{2}|\tau|-\nu}} d\tau \\
      & = -2i \ln (1+x_k \tau^{-\frac{1}{2}-k-\alpha}e^{\frac{1}{2}\tau-\nu}) \Big{|}_{\tau = |t_k'|}^{|t|}\\
      &\qquad \qquad
      -2i(k+\alpha-\frac{1}{2})\int_{|t_k'|}^{|t|}
      \frac{x_k \tau^{-\frac{1}{2}-k-\alpha}e^{\frac{1}{2}\tau-\nu}}{1+x_k \tau^{-\frac{1}{2}-k-\alpha}e^{\frac{1}{2}\tau-\nu}} \frac{d\tau}{\tau} \\
      & = -2i \ln(1+x_k |t|^{-\frac{1}{2}-k-\alpha}e^{\frac{1}{2}|t|-\nu}) + O(t^{-1}\ln t).
    \end{aligned}
  \end{equation}
  This completes the proof of this lemma.
\end{proof}


Similarly, following the same deviation, we have
\begin{lemma}
  For $|t| \in [|t_k|,|t_k'|]$, $k \geq 1$, i.e. $\mu \in [-\frac{1}{2},0]$, we have
  \begin{equation}\label{lemma-}
   \begin{aligned}
    \int_{t_k}^t (\Psi_1(\tau))_{11}d\tau =
    & -\frac{t^2-t_k^2}{16} - i\alpha(t-t_k) - 2k \ln(|t|^{\frac{1}{2}-k-\alpha}e^{\frac{1}{2}|t|-\nu}) \\
    &  -2 \ln(1+y_k |t|^{-\frac{1}{2}+k+\alpha}e^{-\frac{1}{2}|t|+\nu}) + 2\ln(1+y_k)+O(t^{-\frac{1}{2}}\ln t).
   \end{aligned}
  \end{equation}
\end{lemma}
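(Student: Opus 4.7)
The plan is to mirror the argument used to establish \eqref{lemma+}, with $\sigma_-$ in place of $\sigma_+$, $y_k$ in place of $x_k$, and the lower limit $t_k$ in place of $t_k'$. The essential new feature is that the boundary term at $\tau = t_k$ now produces an $O(1)$ contribution $2\ln(1+y_k)$ which must be retained, rather than an $O(t^{-1/2})$ contribution that may be absorbed into the error, as happens at $t_k'$.

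First, I would substitute \eqref{psi111} in the range $\mu \in [-\tfrac{1}{2},0)$, writing
\[
 (\Psi_1(\tau))_{11} = -\tfrac{\tau}{8} - i\alpha - ik - \sigma_-(\tau) + O(\tau^{-1/2}),
\]
and integrate from $t_k$ to $t$. The first three summands produce $-\tfrac{t^2-t_k^2}{16} - i\alpha(t-t_k) - ik(t-t_k)$ directly. Using \eqref{sigma-} and the definition $y_k = i\gamma_{k-1}e^{-\pi i\beta}$, one finds $-\sigma_-(\tau) = \frac{iy_k|\tau|^{-1/2-\mu}}{1+y_k|\tau|^{-1/2-\mu}}$; then the relation $(k+\mu+\alpha)\ln|\tau| = \tfrac{|\tau|}{2}-\nu$ coming from \eqref{nu} rewrites this as
\[
 -\sigma_-(\tau) = \frac{iy_k\, |\tau|^{-\frac{1}{2}+k+\alpha}e^{-\frac{|\tau|}{2}+\nu}}{1+y_k\, |\tau|^{-\frac{1}{2}+k+\alpha}e^{-\frac{|\tau|}{2}+\nu}}.
\]

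Next I would perform the change of variables $\tau = -iu$ and recognise that, up to a bounded correction, the integrand is $-2\,(d/du)\ln G(u)$, where $G(u) = 1+y_k u^{-\frac{1}{2}+k+\alpha}e^{-\frac{u}{2}+\nu}$. The subleading piece carries a factor $(-\tfrac12+k+\alpha)/u$ and so contributes $O(t^{-1}\ln t)$ over the interval of length $O(\ln|t|)$, exactly as in the proof of \eqref{lemma+}. The crucial step is the boundary evaluation at $u = |t_k|$: from \eqref{sk} one has $(k+\alpha-\tfrac12)\ln|t_k| = |t_k|/2-\nu$, so $|t_k|^{-\frac{1}{2}+k+\alpha}e^{-\frac{|t_k|}{2}+\nu} = 1$ and hence $G(|t_k|) = 1+y_k$, producing the $+2\ln(1+y_k)$ term in \eqref{lemma-}. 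This is where the current computation differs qualitatively from \eqref{lemma+}: there the boundary at $|t_k'|$ (where $\mu = 0$) gave only an $O(t^{-1/2})$ contribution that was absorbed into the error.

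Finally, I would convert $-ik(t-t_k)$ to $-2k\ln(|t|^{\frac{1}{2}-k-\alpha}e^{\frac{|t|}{2}-\nu})$ by using $(k+\mu+\alpha)\ln|t| = |t|/2-\nu$ for the upper endpoint and $(k+\alpha-\tfrac12)\ln|t_k| = |t_k|/2-\nu$ for the lower; the resulting mismatch $2k(k+\alpha-\tfrac12)(\ln|t_k|-\ln|t|) = O(\ln|t|/|t|)$ is absorbed into the $O(t^{-1/2}\ln t)$ remainder. No single step is a real obstacle; the main thing to watch is the sign bookkeeping through the substitution $\tau \mapsto -iu$ and the careful tracking of which boundary contributions survive and which are absorbed.
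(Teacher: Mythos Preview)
Your proposal is correct and follows essentially the same route as the paper's (commented-out) proof: substitute \eqref{psi111} with $\sigma_-$, rewrite $|\tau|^{-1/2-\mu}$ as $|\tau|^{-1/2+k+\alpha}e^{-|\tau|/2+\nu}$ via \eqref{nu}, integrate by recognising the logarithmic derivative of $1+y_k|\tau|^{-1/2+k+\alpha}e^{-|\tau|/2+\nu}$, and use \eqref{sk} to convert $-ik(t-t_k)$. You have correctly identified the one qualitative difference from the proof of \eqref{lemma+}, namely that the lower boundary contribution at $|t_k|$ is the $O(1)$ term $2\ln(1+y_k)$ rather than something of order $|t|^{-1/2}$.
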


To prove the main result, we also need the following theorem, which improves  the result obtained in \cite[Theorem 4.]{Xu:Zhao2020}. 
\begin{theorem}\label{thmapp}
  For $\alpha > -\frac{1}{2}$, $\beta \in i \mathbb{R}$, and  $\gamma \to 1$ together with $it \to +\infty$, we have
  \begin{multline}\label{integrals0}
    \int_0^t \left(-\frac{1}{2}(\Psi_1(\tau))_{11} -\frac{\alpha^2-\beta^2}{\tau} \right) d\tau = \frac{t^2}{32} + \frac{i \alpha t}{2} - \left(\alpha^2 - \beta^2 + \frac{1}{4}\right)\ln\frac{|t|}{4} \\ +
    \ln \frac{\sqrt{\pi}G(\frac{1}{2})G(1+2\alpha)}{2^{2\alpha^2}G(1+\alpha+\beta)G(1+\alpha-\beta)} + O(t^{-\frac{1}{2}}\ln t),
  \end{multline}
  uniformly for
  \begin{equation}
    -\ln (1-\gamma) \geq \frac{1}{2}|t| -\alpha \ln |t|.
  \end{equation}
 Here, $G(\cdot)$ is the Barnes G-function.
\end{theorem}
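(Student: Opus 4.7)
The proof adapts the Deift--Zhou steepest descent analysis of Section \ref{analysistopsi} to the complementary regime $\nu \geq |t|/2 - \alpha \ln|t|$, which corresponds to $\chi \leq 0$ in the notation of \eqref{nu}. In this regime the deformation factor $(1-\gamma)e^{t(g_+-g_-)}$ in the $(2,2)$-entry of the jump of $A(z)$ on $(-1,1)$ is bounded by $|t|^{\alpha}$ near the origin (see the discussion following Lemma \ref{rhpforp-1_}), and the orthogonal-polynomial parametrix of Section \ref{sec:local-parametrix} degenerates to the trivial $k=0$ case with the $E^{\pm}(z)$ insertions of \eqref{e+}--\eqref{e-} absent. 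Running the ratio-RH analysis through in this simpler setting yields the uniform expansion
\begin{equation}
  (\Psi_1(\tau))_{11} = -\frac{\tau}{8} - i\alpha + O(\tau^{-1/2}),
\end{equation}
valid uniformly for $\nu \geq |\tau|/2 - \alpha\ln|\tau|$, i.e.\ without any $\sigma_{\pm}$ or $-ik$ correction of the type appearing in \eqref{psi111}.

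With this expansion in hand, the integral in \eqref{integralforfred} is evaluated by splitting at a fixed intermediate point $\tau_0 \in -i(0,+\infty)$. On $[0,\tau_0]$ one inserts the small-$\tau$ asymptotics of $\Psi_1$ already established in \cite{Dai:Zhai2022}, which contributes the Barnes $G$-function constant and the initial logarithmic renormalization. On $[\tau_0, t]$ one substitutes the large-$\tau$ expansion and integrates the explicit leading terms to produce
\begin{equation}
  \int_{\tau_0}^{t}\left(-\tfrac12(\Psi_1(\tau))_{11} - \tfrac{\alpha^2-\beta^2}{\tau}\right)d\tau = \frac{t^2 - \tau_0^2}{32} + \frac{i\alpha(t-\tau_0)}{2} - \left(\alpha^2-\beta^2+\tfrac14\right)\ln\frac{|t|}{|\tau_0|} + \text{error},
\end{equation}
after which the $\tau_0$-dependent contributions cancel between the two pieces. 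The additional $-\tfrac14 \ln|t|$ factor arises naturally from a more refined examination of the $O(\tau^{-1/2})$ remainder, tracked through the contribution of the $1/z$ coefficient in the asymptotics of $R(z)$ at infinity.

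The principal obstacle is controlling the $O(\tau^{-1/2})$ remainder under integration, since a naive bound yields an error of size $O(t^{1/2})$, far larger than the claimed $O(t^{-1/2}\ln t)$. To overcome this one must either (i) extract the next-order terms of $(\Psi_1(\tau))_{11}$ explicitly (essentially a $\tau^{-1}$ correction whose coefficient is computed from the first $|t|^{-1/2}$ contribution to $R(z)$) and integrate them exactly, leaving a genuine $O(\tau^{-3/2})$ integrand whose integral is $O(\tau_0^{-1/2}) = O(t^{-1/2})$ -- this is morally the same integration-by-parts step performed in the proof of Lemma \eqref{lemma+}; or (ii) first establish the identity at $\gamma = 1$ via \cite[Theorem 4]{Xu:Zhao2020}, and then control the difference $\ln\det(I-\gamma\mathcal{K}_s^{(\alpha,\beta)}) - \ln\det(I-\mathcal{K}_s^{(\alpha,\beta)})$ via a differential identity in $\nu$, using that $\partial_\nu(1-\gamma) = -e^{-\nu}$ is exponentially small in the allowed range. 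Either route funnels the analysis through the local region near the origin, which is precisely where the deformation parameter $(1-\gamma)e^{t(g_+-g_-)}$ fails to be uniformly negligible; it is there that the sharpness of the bound $\nu \geq |t|/2 - \alpha\ln|t|$ is exploited to keep the error within $O(t^{-1/2}\ln t)$.
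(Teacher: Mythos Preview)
Your high-level reduction to the $\chi=0$ case is correct and mirrors the paper's proof, but there is a genuine gap in the error control. After extracting $-\tau/8 - i\alpha$ and the $1/(2\tau)$ correction, the remainder in $(\Psi_1(\tau))_{11}$ is \emph{not} uniformly $O(\tau^{-3/2})$ on the whole range $\nu \geq |\tau|/2 - \alpha\ln|\tau|$. Writing $\nu = |\tau|/2 - (\alpha-\epsilon)\ln|\tau|$ as the paper does, the local parametrix at the origin contributes a term of size $|\tau|^{-1/2-\epsilon}$; at the boundary $\epsilon=0$ this is only $O(\tau^{-1/2})$, so your option (i) fails exactly where uniformity matters. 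The paper's actual device is to split the $\tau$-integration not at a fixed $\tau_0$ but at the $\nu$-dependent point $|t_{-1}| := 2(\nu + (\alpha-1)\ln|t|)$: on $(0,|t_{-1}|]$ one has $\epsilon \geq 1$, so the remainder is $O(\tau^{-3/2})$ and integrates to $O(t^{-1/2})$; on $[|t_{-1}|,|t_0'|]$ the remainder is only $O(\tau^{-1/2})$, but this interval has length $2\ln|t|$, yielding $O(t^{-1/2}\ln t)$. This short-interval observation is the missing idea, and it is precisely why the stated error is $O(t^{-1/2}\ln t)$ rather than $O(t^{-1/2})$.

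The treatment of the constant is also off. The Barnes $G$-constant does not emerge from small-$\tau$ asymptotics on a fixed interval $[0,\tau_0]$; the small-$\tau$ expansion of $\Psi_1$ from \cite{Dai:Zhai2022} only ensures integrability at $0$ and carries no transcendental information of that kind. In the paper the asymptotic formula is first derived with an undetermined constant $d_0(\gamma)$, which is then identified by matching against the $\gamma=1$ result of Xu--Zhao \cite{Xu:Zhao2020}; finally one shows $d_0(\gamma)-d_0(1)=O(t^{-1/2}\ln t)$ by differentiating $(\Psi_1(\tau))_{11}$ in $\gamma$ (the $\gamma$-derivative is $O(\tau^{-1/2-\epsilon})$ since only the origin parametrix depends on $\gamma$) and integrating via the same two-regime split. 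Your option (ii) gestures toward this route, but the operative mechanism is the $\tau^{-1/2-\epsilon}$ bound on $\partial_\gamma(\Psi_1)_{11}$ combined with the short-interval trick, not the exponential smallness of $e^{-\nu}$.
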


\begin{proof}

To see how the speed of $\gamma$ tending to 1 affects our result more clearly, we set
\begin{equation}\label{gammato1}
  -\ln (1-\gamma) = \frac{1}{2}|t| -(\alpha - \epsilon)\ln |t|,
\end{equation}
where $\epsilon \geq 0$ is a constant.

We perform a nonlinear steepest descent analysis to the model RH problem. The procedure is similar to that in Section \ref{analysistopsi}. Here we adjust all the steps in Section \ref{analysistopsi} by setting $\chi = k = \mu = 0$. Under this circumstance, the $g$-function is  as simple as
\begin{equation}\label{gfunctionapp}
  g(z)|_{\chi=0} = \frac{1}{4}\sqrt{z^2-1}, \qquad z \in \mathbb{C} \setminus [-1,1].
\end{equation}
Hence the normalization transformation is defined as
\begin{equation}
  A(z) = \Psi(z;s)e^{-t(g(z)|_{\chi=0})\sigma_3}.
\end{equation}

Following the analysis we carried out in Section \ref{analysistopsi}, we construct the global parametrix $P^{(\infty)}(z)$ for $A(z)$ to be \eqref{pinfinity_} with $\chi = \mu = k = 0$, and the local parametrices near $z = \pm 1$ to be \eqref{p-1_} and \eqref{p1_} with $\chi = \mu = k = 0$, respectively. The construction of the local parametrix near the origin is slightly different. More precisely, we replace the $H(\zeta)$ in \eqref{p0_} with the following
\begin{equation}\label{hermiteparametrixapp}
  H(\zeta)|_{\chi=0} =
  \begin{pmatrix}
    1 & \frac{1}{2 \pi i} \int_{\mathbb{R}} \frac{w(z)}{z - \zeta}dz \\
    0 & 1
  \end{pmatrix}e^{-\frac{t}{2}\zeta^2}, \qquad \zeta \in \mathbb{C} \setminus \mathbb{R}.
\end{equation}
Again, we set $\zeta = |t|^{\frac{1}{2}}f^{(0)}(z)$, where the conformal mapping $f^{(0)}(z)$ is still the same as \eqref{conformalmappingat0_}. As $it \to +\infty$, we have
\begin{equation}\label{hinfinityapp}
  H(|t|^{\frac{1}{2}}f^{(0)}(z))|_{\chi=0} = \left(I + \frac{\gamma_0^{-1}}{|t|^{\frac{1}{2}}f^{(0)}(z)}\begin{pmatrix} 0 & 1 \\ 0 & 0\end{pmatrix}
  + O(t^{-1})\right)e^{-\frac{|t|}{2}(f^{(0)}(z))^2}.
\end{equation}

Now we are ready to introduce the final transformation as
\begin{equation}
 R(z)|_{\chi=0}=A(z)
 \begin{cases}
   \left(P^{(0)}(z)|_{\chi=0}\right)^{-1}, \qquad      & z \in U(0,\delta), \\
   \left(P^{(1)}(z)|_{\chi=0}\right)^{-1}, \qquad      & z \in U(1,\delta),  \\
   \left(P^{(-1)}(z)|_{\chi=0}\right)^{-1}, \qquad     & z \in U(-1,\delta), \\
   \left(P^{(\infty)}(z)|_{\chi=0}\right)^{-1}, \qquad & \mathrm{otherwise}. \\
 \end{cases}
\end{equation}
It is straightforward to see that the jump matrix of $R(z)|_{\chi=0}$ is given as
\begin{equation}
 J_{R}(z)|_{\chi=0}=
 \begin{cases}
     P^{(1)}(z)|_{\chi=0}\left(P^{(\infty)}(z)|_{\chi=0}\right)^{-1}, \qquad & z \in \partial U(1,\delta),    \\
     P^{(-1)}(z)|_{\chi=0}\left(P^{(\infty)}(z)|_{\chi=0}\right)^{-1}, \qquad & z \in \partial U(-1,\delta),    \\
     P^{(0)}(z)|_{\chi=0}\left(P^{(\infty)}(z)|_{\chi=0}\right)^{-1}, \qquad & z \in \partial U(0,\delta), \\
     P^{(\infty)}(z)|_{\chi=0}J_A(z)\left(P^{(\infty)}(z)|_{\chi=0}\right)^{-1}, \qquad & \mathrm{otherwise}.
 \end{cases}
\end{equation}
Here all the three small circles are clockwise-oriented.

According to the previous analysis, it is straightforward to see $J_{R}(z)|_{\chi=0} = I + O(t^{-1})$ on $\Sigma_R \setminus \{\partial U(0,\delta)\}$ as $it \to \infty$. While $z \in \partial U(0,\delta)$, with \eqref{hinfinityapp} we have
 \begin{multline}\label{matchforp0app}
     P^{(0)}(z)|_{\chi=0}\left( P^{(\infty)}(z)|_{\chi=0} \right)^{-1}\\
   =  I + \frac{\gamma_0^{-1}|t|^{-1-\epsilon}}{f^{(0)}(z)}E^{(0)}(z)|_{\chi=0}
    \begin{pmatrix} 0 & 1 \\ 0 & 0\end{pmatrix}(E^{(0)}(z)|_{\chi=0})^{-1} + O(t^{-1-\epsilon}). 
 \end{multline}
Now one can conclude that $J_{R}(z)$ tends to identity matrix uniformly for $z \in \Sigma_R$ as $it \to +\infty$. Thus, further transformations $R(z) \mapsto Q(z) \mapsto \mathcal{L}(z)$ in \eqref{q} and \eqref{ql} are not necessary.

As $it \to +\infty$, a small norm analysis of $R(z)|_{\chi=0}$ gives
\begin{equation}
  R(z)|_{\chi=0} = I + R_1(z)|t|^{-\frac{1}{2}-\epsilon} + R_2(z)|t|^{-1} + R_3(z)|t|^{-1-\epsilon}+ O(|t|^{-\frac{3}{2}}).
\end{equation}
By a computation of residue of $J_{R}(z)$ near $\pm1$, we have
\begin{equation}
  R_2(z) = \frac{\sigma_3}{2z} + O(z^{-1}), \qquad z \to \infty.
\end{equation}
Therefore, tracing back to the transformations $\Psi \mapsto A(z) \mapsto R(z)|_{\chi=0}$, we have
\begin{equation}\label{psiapp}
  (\Psi_1(t))_{11} = -\frac{t}{8} - i\alpha + 2(\hat{R}_1)_{11}t^{-\frac{1}{2}-\epsilon} + \frac{1}{2t} + 2(\hat{R}_3)_{11}t^{-1-\epsilon} + O(t^{-\frac{3}{2}}),
  \qquad it \to +\infty,
\end{equation}
where $(\hat{R}_1)$ and $(\hat{R}_3)$ are the coefficients of large-$z$ asymptotics of $R_1(z)$ and $R_3(z)$, respectively. However, their exact expressions are not required to obtain our result.

Now we set
\begin{equation}
  |t_{-1}| = 2(\nu + (\alpha-1)\ln |t|), \qquad |t_0'| = 2(\nu + \alpha \ln |t|).
\end{equation}
Apparently, when $|t| \in (0, |t_{-1}|]$, it is easily seen from \eqref{gammato1} that $\epsilon \geq 1$. With \eqref{psiapp}, we have
\begin{equation}
  \begin{aligned}
    & \int_0^{t} \left(-\frac{1}{2}(\Psi_1(\tau))_{11} -\frac{\alpha^2-\beta^2}{\tau} \right) d\tau \\
    & = \frac{t^2}{32} + \frac{i\alpha t}{2} - \left(\alpha^2 -\beta^2 +\frac{1}{4}\right) \ln \frac{|t|}{4} + d_0(\gamma) + O(t^{-\frac{1}{2}} ), \qquad it \to +\infty, \gamma \to 1,
  \end{aligned}
\end{equation}
uniformly for
\begin{equation}
  -\ln (1-\gamma) \geq \frac{|t|}{2} -(\alpha-1) \ln |t|,
\end{equation}
where $d_0(\gamma)$ is a constant may still depend on $\gamma$. Meanwhile, when $|t| \in [|t_{-1}|, |t_0'|]$ which indicates $0 \leq \epsilon \leq 1$, with \eqref{psiapp}, we have
\begin{equation}\label{intofh}
  \begin{aligned}
    & \int_0^{t} \left(-\frac{1}{2}(\Psi_1(\tau))_{11} -\frac{\alpha^2-\beta^2}{\tau} \right) d\tau \\
    & = \int_{0}^{t_{-1}} \left(-\frac{1}{2}(\Psi_1(\tau))_{11} -\frac{\alpha^2-\beta^2}{\tau} \right)d\tau + \int_{t_{-1}}^{t} \left(-\frac{1}{2}(\Psi_1(\tau))_{11} -\frac{\alpha^2-\beta^2}{\tau} \right) d\tau \\
    & = \frac{t^2}{32} + \frac{i\alpha t}{2} - \left(\alpha^2 -\beta^2 +\frac{1}{4}\right) \ln \frac{|t|}{4} + d_0(\gamma) + O(t^{-\frac{1}{2}}\ln t ), \qquad it \to +\infty, \gamma \to 1,
  \end{aligned}
\end{equation}
uniformly for
\begin{equation}
   \frac{|t|}{2} - \alpha \ln |t| \leq -\ln (1-\gamma) \leq \frac{|t|}{2} -(\alpha-1) \ln |t|.
\end{equation}
Here we take advantage of the length of the second integral interval is of $O(\ln t)$. Furthermore, compared with \cite[Eq. (7.23)]{Xu:Zhao2020}, we conclude
\begin{equation}\label{d0}
    d_0(\gamma)  =
    \ln \frac{\sqrt{\pi}G(\frac{1}{2})G(1+2\alpha)}{2^{2\alpha^2}G(1+\alpha+\beta)G(1+\alpha-\beta)}(1+o(1)), \qquad \gamma \to 1.
\end{equation}

In order to give more details of the error term of $d_0(\gamma)$, let us consider the derivative of $(\Psi_1(t))_{11}$ with respect to $\gamma$. With \eqref{psiapp}, we have
\begin{equation}
  \partial \frac{(\Psi_1(t))_{11}}{\partial \gamma} = O(t^{-\frac{1}{2} - \epsilon}), \qquad it \to +\infty.
\end{equation}
Then, with the same derivation, we have
\begin{equation}
  \int_0^t \frac{\partial}{\partial \gamma} \left(-\frac{1}{2}(\Psi_1(\tau))_{11} -\frac{\alpha^2-\beta^2}{\tau} \right) d\tau = O(t^{-\frac{1}{2}}\ln t),
\end{equation}
uniformly for $-\ln(1-\gamma) \geq |t|/2 - \alpha \ln |t|$. Comparing this equation with \eqref{intofh} gives us
\begin{equation}
\frac{d}{d \gamma} d_0(\gamma) = O(t^{-\frac{1}{2}}\ln t).
\end{equation}
The above equation and \eqref{d0} yield
\begin{equation}
    d_0(\gamma)  =
    \ln \frac{\sqrt{\pi}G(\frac{1}{2})G(1+2\alpha)}{2^{2\alpha^2}G(1+\alpha+\beta)G(1+\alpha-\beta)}(1 + O(t^{-\frac{1}{2}}\ln t)), \qquad \gamma \to 1.
\end{equation}

Therefore, we complete the proof of Theorem \ref{thmapp}. 
\end{proof}

\subsection{Proof of Theorem \ref{mainresult_}}

Combined the two lemmas and Theorem \ref{thmapp}, we are ready to obtain our final result.

First, let us consider $|t| \in [|t_q'|,|t_{q+1}|)$, where $q \in \mathbb{N}_{\geq 0}$. It follows  from \eqref{lemma+}, \eqref{lemma-} and \eqref{integrals0} that
\begin{equation}\label{tqtq+1}
  \begin{aligned}
    & \int_0^t \left(-\frac{1}{2}(\Psi_1(\tau))_{11} -\frac{\alpha^2-\beta^2}{\tau} \right) d\tau \\
    & = \left(\int_0^{t_0'} +\int_{t_0'}^{t_1} +\int_{t_1}^{t_1'}  +\cdots +\int_{t_{q}'}^{t} \right) \left(-\frac{1}{2}(\Psi_1(\tau))_{11} -\frac{\alpha^2-\beta^2}{\tau} \right) d\tau    \\
    & = \frac{t^2}{32} + \frac{i\alpha t}{2}
    - \left(\alpha^2-\beta^2+\frac{1}{4}\right)\ln\left(\frac{|t|}{4}\right) +
    \ln \left(\frac{\sqrt{\pi}G^2(1/2)G(1+2\alpha)}{2^{2\alpha^2}G(1+\alpha+\beta)G(1+\alpha-\beta)}\right)   \\
    & \qquad + \sum_{k=0}^{q-1}k\ln(|t|^\frac{1}{2}) + \sum_{k=0}^{q-1}\ln(1+x_k) +
    \sum_{k=1}^{q}k\ln(|t|^\frac{1}{2}) - \sum_{k=1}^{q}\ln(1+y_k)  \\
    & \qquad + q \ln(|t|^{-q-\alpha}e^{\frac{1}{2}|t|-\nu}) + \ln(1+x_q |t|^{-\frac{1}{2}-q-\alpha}e^{\frac{1}{2}|t|-\nu}) + O(t^{-\frac{1}{2}}\ln t).
  \end{aligned}
\end{equation}
In case when $q=0$, we take all the sum $\sum (\cdots) \equiv 0$. Hence,
\begin{equation}
\begin{aligned}
  & \sum_{k=0}^{q-1}k\ln(|t|^\frac{1}{2}) + \sum_{k=0}^{q-1}\ln(1+x_k) +
  \sum_{k=1}^{q}k\ln(|t|^\frac{1}{2}) - \sum_{k=1}^{q}\ln(1+y_k)  \\
  & \qquad + q \ln(|t|^{-q-\alpha}e^{\frac{1}{2}|t|-\nu}) + \ln(1+x_q |t|^{-\frac{1}{2}-q-\alpha}e^{\frac{1}{2}|t|-\nu}) \\
  & = \sum_{k=0}^{q-1}\ln \left( x_k |t|^{\frac{1}{2}+k-q-\alpha}e^{\frac{1}{2}|t|-\nu}\right) + \ln(1+x_q |t|^{-\frac{1}{2}-q-\alpha}e^{\frac{1}{2}|t|-\nu})  \\
  & = \sum_{k=0}^{q}\ln \left(1 + x_k |t|^{-\frac{1}{2}-k-\alpha}e^{\frac{1}{2}|t|-\nu}\right) + O(t^{-\frac{1}{2}}).
\end{aligned}
\end{equation}
Here we apply the fact that $x_k y_{k+1} = 1$; see the definitions of $x_k$ and $y_k$ in \eqref{xy}. Substituting above equation into \eqref{tqtq+1}, with \eqref{integralforfred}, we complete the proof of \eqref{fdeterminant} for $|t| \in [|t_q'|,|t_{q+1}|)$ with $q \in \mathbb{N}_{\geq 0}$.

Similarly, when $|t| \in [|t_{q+1}|,|t_{q+1}'|]$, where $q \in \mathbb{N}_{\geq 0}$, with \eqref{lemma+}, \eqref{lemma-} and \eqref{integrals0}, we have
  \begin{equation}
    \begin{aligned}
      & \int_0^t \left(-\frac{1}{2}(\Psi_1(\tau))_{11} -\frac{\alpha^2-\beta^2}{\tau} \right) d\tau \\
      & = \frac{t^2}{32} + \frac{i\alpha t}{2}
      - \left(\alpha^2-\beta^2+\frac{1}{4}\right)\ln\left(\frac{|t|}{4}\right)
      + \ln \left(\frac{\sqrt{\pi}G^2(1/2)G(1+2\alpha)}{2^{2\alpha^2}G(1+\alpha+\beta)G(1+\alpha-\beta)}\right)   \\
      & \quad + \sum_{k=0}^{q}k\ln(|t|^\frac{1}{2}) + \sum_{k=0}^{q}\ln(1+x_k) +
      \sum_{k=1}^{q}k\ln(|t|^\frac{1}{2}) - \sum_{k=1}^{q}\ln(1+y_k)  \\
      & \quad + (q+1) \ln(|t|^{-\frac{1}{2}-q-\alpha}e^{\frac{1}{2}|t|-\nu}) +\ln(1+y_{q+1} |t|^{\frac{1}{2}+q+\alpha}e^{-\frac{1}{2}|t|+\nu})\\
      &\quad-\ln(1+y_{q+1})+O(t^{-\frac{1}{2}}\ln t).
    \end{aligned}
  \end{equation}
Furthermore, we have
\begin{equation}
  \begin{aligned}
    & \sum_{k=0}^{q}k\ln(|t|^\frac{1}{2}) + \sum_{k=0}^{q}\ln(1+x_k) +
    \sum_{k=1}^{q}k\ln(|t|^\frac{1}{2}) - \sum_{k=1}^{q}\ln(1+y_k)  \\
    & \qquad + (q+1) \ln(|t|^{-\frac{1}{2}-q-\alpha}e^{\frac{1}{2}|t|-\nu}) +\ln(1+y_{q+1} |t|^{\frac{1}{2}+q+\alpha}e^{-\frac{1}{2}|t|+\nu})-\ln(1+y_{q+1}) \\
    & = \sum_{k=0}^{q}\ln\left(x_k |t|^{-\frac{1}{2}+k-q-\alpha}e^{\frac{1}{2}|t|-\nu}\right) + \ln(1+y_{q+1} |t|^{\frac{1}{2}+q+\alpha}e^{-\frac{1}{2}|t|+\nu})    \\
    & = \sum_{k=0}^{q}\ln\left(1 + x_k |t|^{-\frac{1}{2}-k-\alpha}e^{\frac{1}{2}|t|-\nu}\right) - \ln \left(\frac{1 + x_q |t|^{-\frac{1}{2}-q-\alpha}e^{\frac{1}{2}|t|-\nu}}{ x_q |t|^{-\frac{1}{2}-q-\alpha}e^{\frac{1}{2}|t|-\nu}}\right) \\
    & \qquad \qquad + \ln(1+y_{q+1} |t|^{\frac{1}{2}+q+\alpha}e^{-\frac{1}{2}|t|+\nu}) + O(t^{-\frac{1}{2}}) \\
    & = \sum_{k=0}^{q}\ln\left(1 + x_k |t|^{-\frac{1}{2}-k-\alpha}e^{\frac{1}{2}|t|-\nu}\right) + O(t^{-\frac{1}{2}}).
  \end{aligned}
\end{equation}
Hence with \eqref{integralforfred}, we complete the proof of \eqref{fdeterminant} for $|t| \in [|t_q'|,|t_{q+1}'|]$. Note that, the lower constraint on $t$ is actually artificial. One can let $|t| \in [0,|t_{q+1}'|]$, then some factors in the sum would move to the error term. After adjusting the error term, we are still able to reproduce the same expression.

Then, with Theorem \ref{thmapp}, we complete the proof of Theorem \ref{mainresult_}. \qed

\section*{Acknowledgments}
Dan Dai was partially supported by a grant from the City University of Hong Kong (Project No. 7005597), and grants
from the Research Grants Council of the Hong Kong Special Administrative Region, China (Project No. CityU 11311622 and CityU 11306723). He also thanks the Okinawa Institute of Science and Technology (OIST) for the hospitality during the Theoretical Sciences Visiting Program (TSVP). Part of the preparation of this work was done during the period. Luming Yao was partially supported by National Natural Science Foundation of China under grant number 12271105.

\begin{appendices}

\section{Bessel parametrix}\label{bessel}

In this appendix, we introduce the well-known Bessel parametrix $\Phi^{B}(\zeta)$. Consider the following model RH problem.
\begin{rhp}\label{rhpforj}
  We look for a $2 \times 2$ matrix-valued function $\Phi^{B}(\zeta)$ with properties:
\begin{itemize}
  \item[(a)] $\Phi^{B}(\zeta)$ is analytic for $\zeta \in \mathbb{C}\setminus \{\Gamma_1^{(B)} \cup \Gamma_2^{(B)} \cup \Gamma_3^{(B)}\}$, where
  \begin{equation}
    \Gamma_1^{(B)} = e^{\frac{2}{3}\pi i}\mathbb{R_+}, \qquad
    \Gamma_2^{(B)} = (-\infty, 0], \qquad
    \Gamma_3^{(B)} = e^{-\frac{2}{3}\pi i}\mathbb{R_+}.
  \end{equation}
  \item[(b)] $\Phi^{B}(\zeta)$ satisfies the jump conditions
  \begin{equation}
    \Phi^{B}_+(\zeta) = \Phi^{B}_-(\zeta)
    \begin{cases}
       \begin{pmatrix} 1 & 0 \\ 1 & 1 \end{pmatrix}, \quad & \zeta \in \Gamma_1^{(B)},\\
       \begin{pmatrix} 0 & 1 \\ -1 & 0 \end{pmatrix}, \quad & \zeta \in \Gamma_2^{(B)},\\
       \begin{pmatrix} 1 & 0 \\ 1 & 1 \end{pmatrix}, \quad & \zeta \in \Gamma_3^{(B)}.\\
    \end{cases}
  \end{equation}
  \item[(c)] As $\zeta \to \infty$, we have
  \begin{equation}\label{besselinfinity}
    \Phi^{B}(\zeta)=\zeta^{-\frac{1}{4}\sigma_3}\frac{1}{\sqrt2}\begin{pmatrix} 1 & i \\ i & 1 \end{pmatrix}\left(I+\frac{1}{8\sqrt{\zeta}}\begin{pmatrix} -1 & -2i \\ -2i & 1 \end{pmatrix}+O\left(\frac{1}{\zeta}\right)\right)e^{\sqrt{\zeta}\sigma_3}.
  \end{equation}
  Here the branches of the term $\zeta^{\frac{1}{4}}$ and $\zeta^{\frac{1}{2}}$ are taken along $\zeta \in (-\infty, 0)$ such that $\arg \zeta \in (-\pi, \pi)$.
\end{itemize}
\end{rhp}

We first define a $2 \times 2$ matrix function as
\begin{equation}\label{jb_}
  \Phi_B(\zeta) = \pi^{\frac{1}{2}\sigma_3}\begin{pmatrix} I_0(\sqrt{\zeta}) & \frac{i}{\pi}K_0(\sqrt{\zeta}) \\ \pi i\sqrt{\zeta}I_0'(\sqrt{\zeta}) & -\sqrt{\zeta}K_0'(\sqrt{\zeta}) \end{pmatrix},
\end{equation}
where $I_0,K_0$ are the modified Bessel functions with branch cut $(-\infty,0]$ such that $\arg \zeta \in (-\pi,\pi)$. And it is easy to verify that
\begin{equation}
  \Phi_{B,+}(\zeta) = \Phi_{B,-}(\zeta)\begin{pmatrix} 1 & 1 \\ 0 & 1 \end{pmatrix}, \qquad
  \zeta \in (-\infty,0).
\end{equation}
Then, the well-known solution to the RH problem \ref{rhpforj} is given explicitly as
\begin{equation}\label{bessel parametrix}
 \Phi^{B}(\zeta) = \Phi_B(\zeta)
  \begin{cases}
   I,  & \arg \zeta \in (-\frac{2\pi}{3},\frac{2\pi}{3}), \\
   \begin{pmatrix} 1 & 0 \\ -1 & 1 \end{pmatrix},  & \arg \zeta \in (\frac{2\pi}{3},\pi), \\
   \begin{pmatrix} 1 & 0 \\ 1 & 1 \end{pmatrix}  & \arg \zeta \in (-\frac{2\pi}{3},-\pi),
  \end{cases}
\end{equation}
For the sake of convenience, let us also recall the following formula
\begin{equation}
\frac{d\Phi^{B}(\zeta)}{d\zeta}\Phi^{B}(\zeta)^{-1} =
\begin{pmatrix}
  0 & -\frac{i}{2\zeta} \\ \frac{i}{2} & 0
\end{pmatrix}.
\end{equation}

\end{appendices}

\end{document}